      \def\@setcopyright{}
      \def\serieslogo@{}
\newcommand{\Complex}{\mathbb C}
\newcommand{\Real}{\mathbb R}
\newcommand{\ddbar}{\overline\partial}
\newcommand{\pr}{\partial}
\newcommand{\ol}{\overline}
\newcommand{\Td}{\widetilde}
\newcommand{\norm}[1]{\left\Vert#1\right\Vert}
\newcommand{\abs}[1]{\left\vert#1\right\vert}
\newcommand{\rabs}[1]{\left.#1\right\vert}
\newcommand{\set}[1]{\left\{#1\right\}}
\newcommand{\To}{\rightarrow}
\newcommand{\pit}{\mathit\Pi}
\newcommand{\cali}[1]{\mathscr{#1}}
\theoremstyle{plain}
\newtheorem{thm}{Theorem}[section]
\newtheorem{cor}[thm]{Corollary}
\newtheorem{lem}[thm]{Lemma}
\newtheorem{prop}[thm]{Proposition}
\newtheorem{ass}[thm]{Assumption}
\theoremstyle{definition}
\theoremstyle{remark}
\newtheorem{rem}[thm]{Remark}
\numberwithin{equation}{section}
\begin{document}
\title[]{On the coefficients of the asymptotic expansion of the kernel of Berezin-Toeplitz quantization}
\author[]{Chin-Yu Hsiao}
\address{Universit{\"a}t zu K{\"o}ln,  Mathematisches Institut,
    Weyertal 86-90,   50931 K{\"o}ln, Germany}
\thanks{The author is supported by the DFG funded project MA 2469/2-1}
\email{chsiao@math.uni-koeln.de}

\begin{abstract} 
We give new methods for computing the coefficients of the asymptotic expansions of the kernel of Berezin-Toeplitz quantization
obtained recently by Ma-Marinescu, and of the composition of two Berezin-Toeplitz quantizations. 
Our main tool is the stationary phase formula of Melin-Sj\"{o}strand.
\end{abstract}    

\maketitle \tableofcontents  

\section{Introduction and statement of the main results}  

Let $L^k$ be the $k$-th tensor power of a positive holomorphic line bundle $L$ over a compact complex manifold $X$. 
Let $\cali{H}^0(X,L^k)$ be the space of global holomorphic sections of $L^k$ and let
$\pit^{(k)}$ denote the orthogonal projector on $\cali{H}^0(X,L^k)$ in the $L^2$ space. Let $f\in C^\infty(X)$. 
Berezin-Toeplitz quantization with symbol $f$ is the operator $T^{(k)}_f=\pit^{(k)}\circ f\circ\pit^{(k)}$. 
The study of the $k$ large behaviour of $T^{(k)}_f$ is important in the geometric quantization theory . Ma and Marinescu~\cite{MM08b} obtained a full off-diagonal asymptotic expansion in $k$ of the kernel of $T^{(k)}_f$. 
They also calculated in~\cite{MM10} the first three coefficients of the expansion on the diagonal when $X$ is polarized (see \eqref{s1-e17}) by using kernel calculations on $\Complex^n$ and the analytic localization technique of Bismut-Lebeau~\cite{MM07}.
The coefficients of the expansion turned out to be deeply related to various problem in complex geometry (see e.g. Fine~\cite{Fine10a},~\cite{Fine10b} ). 

Since microlocal analysis is one of the main tools of quantization, it is important to know how to use microlocal analysis to calculate the coefficients of the expansion. This will be done in the present paper. There are two ingredients of our approach: the phase function version of the asymptotic expansion of the kernel of Berezin-Toeplitz quantization and the method of stationary phase. We will calculate the first three coefficients of the expansion by using this method. We do not assume that $X$ is polarized. Even through the inspiration for the calculation from microlocal analysis, the arguments in this paper are elementary and simple. 

\subsection{Some standard notations}

We shall use the following notations: $\Real$ is the set of real numbers, $\mathbb N=\set{1,2,\ldots}$, $\mathbb N_0=\mathbb N\bigcup\set{0}$. An element $\alpha=(\alpha_1,\ldots,\alpha_n)$ of $\mathbb N_0^n$ will be called a multiindex and the length of $\alpha$ is: $\abs{\alpha}=\alpha_1+\cdots+\alpha_n$. We write $x^\alpha=x_1^{\alpha_1}\cdots x^{\alpha_n}_n$, 
$x=(x_1,\ldots,x_n)$,
$\pr^\alpha_x=\pr^{\alpha_1}_{x_1}\cdots\pr^{\alpha_n}_{x_n}$, $\pr_{x_j}=\frac{\pr}{\pr x_j}$, $D^\alpha_x=D^{\alpha_1}_{x_1}\cdots D^{\alpha_n}_{x_n}$, $D_x=\frac{1}{i}\pr_x$, $D_{x_j}=\frac{1}{i}\pr_{x_j}$. 

Let $M$ be a $C^\infty$ paracompact manifold. 
We let $TM$ and $T^*M$ denote the tangent bundle of $M$ and the cotangent bundle of $M$ respectively.
The complexified tangent bundle of $M$ and the complexified cotangent bundle of $M$ will be denoted by $\Complex TM$
and $\Complex T^*M$ respectively. We write $\langle\,\cdot\,,\cdot\,\rangle$ to denote the pointwise duality 
between $TM$ and $T^*M$.
We extend $\langle\,\cdot\,,\cdot\,\rangle$ bilinearly to $\Complex TM\times\Complex T^*M$.
Let $E$ be a $C^\infty$ vector bundle over $M$. The fiber of $E$ at $x\in M$ will be denoted by $E_x$.
Let $F$ be another $C^\infty$ vector bundle over $M$. We write 
$E\boxtimes F$ to denote the vector bundle over $M\times M$ with fiber over $(x, y)\in M\times M$ 
consisting of the linear maps from $E_x$ to $F_y$. We write ${\rm End\,}(E)$ to denote $E\boxtimes E$. 

\subsection{Set up and Terminology} 

Let $X$ be a compact complex manifold of dimension $n$ with a smooth
Hermitian metric $\langle\,\cdot\,,\cdot\,\rangle$ on the holomorphic tangent bundle $T^{1,0}X$.
Let $T^{0,1}X$ be the anti-holomorphic tangent bundle of $X$. We extend the Hermitian metric $\langle\,\cdot\,,\cdot\,\rangle$ to $\Complex TX$ in a natural way by requiring $T^{1,0}X$ 
to be orthogonal to $T^{0,1}X$ and satisfy $\ol{\langle\,u\,,v\,\rangle}=\langle\,\ol u\,,\ol v\,\rangle$, $u, v\in T^{0,1}X$. For $p, q\in\mathbb N_0$, let $\Lambda^{p,q}T^*X$ be the bundle of $(p,q)$ forms of $X$. The Hermitian metric $\langle\,\cdot\,,\cdot\,\rangle$ on $\Complex TX$ induces a Hermitian metric on
$\Lambda^{p, q}T^*X\otimes\Lambda^{r,s}T^*X$, $p,q,r,s\in\mathbb N_0$, also denoted by $\langle\,\cdot\,,\cdot\,\rangle$. Let
$D\subset X$ be an open set. If $E$ is a vector bundle over $D$, then we let $C^\infty(D, E)$
denote the space of smooth sections of $E$ over $D$. Let $C^\infty_0(D, E)$ be the subspace of
$C^\infty(D, E)$ whose elements have compact support in $D$. 

In the sequel we will denote by $\langle\,\cdot\,,\cdot\,\rangle$ both scalar products as well as the duality bracket between vector fields and forms. 

Let $(L,h^L)$ be a holomorphic line bundle over $X$, where
the Hermitian fiber metric on $L$ is denoted by $h^L$. In this work, we assume that $h^L$ is smooth. 
Let $\phi$ denote the local weights of the Hermitian metric. More precisely, if
$s(z)$ is a local trivializing section of $L$ on an open subset $D\subset X$, then the pointwise
norm of $s$ is
\begin{equation} \label{s1-e2}
\abs{s(x)}^2=\abs{s(x)}^2_{h^L}=e^{-2\phi(x)},\quad\phi\in C^\infty(D, \Real).
\end{equation}
Let $R^L$ be the canonical curvature two form induced by $h^L$. In terms 
of the local weight $\phi$, we have $R^L=2\pr\ddbar\phi$.

We will identify the curvature two form $R^L$ with the Hermitian matrix 
\[\dot{R}^L\in C^\infty(X, {\rm End\,}(T^{1,0}X))\] 
such that for $U, V\in T^{1,0}_xX$, $x\in X$, we have 
\begin{equation} \label{s1-e3}
\langle\,\dot{R}^L(x)U\,,V\,\rangle=\langle\,R^L(x)\,,U\wedge\ol V\,\rangle.
\end{equation}
In this work, we assume that 

\begin{ass} \label{s1-a1} 
$\dot{R}^L$ is positive at each point of $X$, that is, $L$ is a positive holomorphic line bundle over $X$.
\end{ass} 

We introduce now the geometric objects used in Theorem~\ref{s1-tmain}, Theorem~\ref{s1-tmaincomp} below. Put 
\begin{equation} \label{s1-e6} 
\omega=:\frac{\sqrt{-1}}{2\pi}R^L. 
\end{equation}
The real two form $\omega$ induces a Hermitian metric $\langle\,\cdot\,,\cdot\,\rangle_{\omega}$ on $\Complex TX$. 
The Hermitian metric $\langle\,\cdot\,,\cdot\,\rangle_{\omega}$ on $\Complex TX$ 
induces a Hermitian metric on
$\Lambda^{p, q}T^*X\otimes\Lambda^{r, s}T^*X$, $p, q, r, s\in\mathbb N_0$, also denoted by $\langle\,\cdot\,,\cdot\,\rangle_{\omega}$. 
For $u\in\Lambda^{p,q}T^*X$, we denote $\abs{u}^2_{\omega}=:\langle\,u,u\,\rangle_{\omega}$.  

Let $\Theta$ be the real two form induced by $\langle\,\cdot\,,\cdot\,\rangle$. 
In local holomorphic coordinates $z=(z_1,\ldots,z_n)$, put
\begin{equation} \label{s1-e7} 
\begin{split}
&\omega=\sqrt{-1}\sum^n_{j,k=1}\omega_{j,k}dz_j\wedge d\ol z_k,\\
&\Theta=\sqrt{-1}\sum^n_{j,k=1}\Theta_{j,k}dz_j\wedge d\ol z_k. 
\end{split}
\end{equation}
We notice that $\Theta_{j,k}=\langle\,\frac{\pr}{\pr z_j}\,,\frac{\pr}{\pr z_k}\,\rangle$, $\omega_{j,k}=\langle\,\frac{\pr}{\pr z_j}\,,\frac{\pr}{\pr z_k}\,\rangle_{\omega}$, $j,k=1,\ldots,n$.
Put 
\begin{equation} \label{s1-e8}
h=\left(h_{j,k}\right)^n_{j,k=1},\ \ h_{j,k}=\omega_{k,j},\ \ j, k=1,\ldots,n,
\end{equation}
and $h^{-1}=\left(h^{j,k}\right)^n_{j,k=1}$, $h^{-1}$ is the inverse matrix of $h$. The complex Laplacian with respect to $\omega$ is given by 
\begin{equation} \label{s1-e9} 
\triangle_{\omega}=(-2)\sum^n_{j,k=1}h^{j,k}\frac{\pr^2}{\pr z_j\pr\ol z_k}.
\end{equation} 
We notice that $h^{j,k}=\langle\,dz_j\,,dz_k\,\rangle_{\omega}$, $j, k=1,\ldots,n$. Put 
\begin{equation} \label{s1-e10} 
\begin{split}
&V_\omega=:\det\left(\omega_{j,k}\right)^n_{j,k=1},\\
&V_\Theta=:\det\left(\Theta_{j,k}\right)^n_{j,k=1}
\end{split}
\end{equation}
and set  
\begin{equation} \label{s1-e11} 
\begin{split}
&r=\triangle_{\omega}\log V_\omega,\\
&\hat r=\triangle_{\omega}\log V_\Theta.
\end{split}
\end{equation} 
$r$ is called the scalar curvature with respect to $\omega$. Let $R^{\det}_\Theta$ be the curvature of the determinant line bundle of $T^{1,0}X$ with respect to the real two form $\Theta$. We recall that 
\begin{equation} \label{s1-e12}
R^{\rm det\,}_\Theta=-\ddbar\pr\log V_\Theta.
\end{equation} 

Let $h$ be as in \eqref{s1-e8}. Put 
$\theta=h^{-1}\pr h=\left(\theta_{j,k}\right)^n_{j,k=1}$, $\theta_{j,k}\in\Lambda^{1,0}T^*X$, $j,k=1,\ldots,n$.
$\theta$ is the Chern connection 
matrix with respect to $\omega$. The Chern curvature with respect to $\omega$ is given by
\begin{equation} \label{s1-e13}
\begin{split}
&R^{TX}_{\omega}=\ddbar\theta=\left(\ddbar\theta_{j,k}\right)^n_{j,k=1}=\left(\mathcal{R}_{j,k}\right)^n_{j,k=1}
\in C^\infty(X,\Lambda^{1,1}T^*X\otimes{\rm End\,}(T^{1,0}X)),\\
&R^{TX}_{\omega}(\ol U, V)\in {\rm End\,}(T^{1,0}X),\ \ \forall U, V\in T^{1,0}X,\\
&R^{TX}_\omega(\ol U,V)\xi=\sum^n_{j,k=1}\langle\,\mathcal{R}_{j,k}\,,\ol U\wedge V\,\rangle\xi_k\frac{\pr}{\pr z_j},\ \ \xi=\sum^n_{j=1}\xi_j\frac{\pr}{\pr z_j},\ \ U, V\in T^{1,0}X.
\end{split}
\end{equation} 
Set 
\begin{equation} \label{s1-e14} 
\abs{R^{TX}_{\omega}}^2_{\omega}=:\sum^n_{j,k,s,t=1}\abs{\langle\,R^{TX}_{\omega}(\ol e_j,e_k)e_s\,,e_t\,\rangle_{\omega}}^2,
\end{equation} 
where $e_1,\ldots,e_n$ is an orthonormal frame for $T^{1,0}X$ with respect to $\langle\,\cdot\,,\cdot\,\rangle_{\omega}$. 
It is straightforward to see that the definition of $\abs{R^{TX}_{\omega}}^2_{\omega}$ is independent of the choices of orthonormal frames. Thus, $\abs{R^{TX}_{\omega}}^2_{\omega}$ is globally defined. The Ricci curvature with respect to $\omega$ is given by 
\begin{equation} \label{s1-e15}
{\rm Ric\,}_{\omega}=:-\sum^n_{j=1}\langle\,R^{TX}_\omega(\cdot,e_j)\cdot\,,e_j\,\rangle_\omega,
\end{equation}
where $e_1,\ldots,e_n$ is an orthonormal frame for $T^{1,0}X$ with respect to $\langle\,\cdot\,,\cdot\,\rangle_{\omega}$. That is, 
\[\langle\,{\rm Ric\,}_{\omega}\,, U\wedge V\,\rangle=-\sum^n_{j=1}\langle\,R^{TX}_\omega(U,e_j)V\,,e_j\,\rangle_\omega,\ \ 
U, V\in\Complex TX.\]
${\rm Ric\,}_{\omega}$ is a global $(1,1)$ form.

Let 
\begin{equation} \label{s1-e15-I}
D^{0,1}:C^\infty(X,\Lambda^{0,1}T^*X)\To C^\infty(X,\Lambda^{0,1}T^*X\otimes\Lambda^{0,1}T^*X)
\end{equation}
be the $(0,1)$ component of the Chern connection on $\Lambda^{0,1}T^*X$ induced by $\langle\,\cdot\,,\cdot\,\rangle_\omega$. That is, in local coordinates $z=(z_1,\ldots,z_n)$, put 
\[A=\left(a_{j,k}\right)^n_{j,k=1},\ \ a_{j,k}=\langle\,d\ol z_k\,,d\ol z_j\,\rangle_\omega,\ \ j,k=1,\ldots,n,\] 
and set 
\begin{equation} \label{s1-e15-Ibis}
\mathcal{A}=A^{-1}\pr A=\left(\alpha_{j,k}\right)^n_{j,k=1},\ \ \alpha_{j,k}\in\Lambda^{0,1}T^*X,\ \ j,k=1,\ldots,n.
\end{equation}
Then, for $u=\sum^n_{j=1}u_jd\ol z_j\in C^\infty(X,\Lambda^{0,1}T^*X)$, we have 
\[D^{0,1}u=\sum^n_{j=1}\ddbar u_j\otimes d\ol z_j+\sum^n_{j,k=1}u_j\alpha_{k,j}\otimes d\ol z_k\in C^\infty(X,\Lambda^{0,1}T^*X\otimes\Lambda^{0,1}T^*X).\] 

Similarly, let 
\begin{equation} \label{s1-e15-II}
D^{1,0}:C^\infty(X,\Lambda^{1,0}T^*X)\To C^\infty(X,\Lambda^{1,0}T^*X\otimes\Lambda^{1,0}T^*X)
\end{equation}
be the $(1,0)$ component of the Chern connection on $\Lambda^{1,0}T^*X$ induced by $\langle\,\cdot\,,\cdot\,\rangle_\omega$. 

\subsection{The main results} 

In order to state our results precisely, we first review briefly the asymptotic expansion of the 
kernel of Berezin-Toeplitz quantization.
Let $L^k$, $k>0$, be the $k$-th tensor power of the line bundle $L$.
The Hermitian fiber metric on $L$ induces a Hermitian fiber metric
on $L^k$ that we shall denote by $h^{L^k}$. If $s$ is a local
trivializing section of $L$ then $s^k$ is a local trivializing
section of $L^k$. For $f\in C^\infty(X, L^k)$, we denote the
pointwise norm $\abs{f(x)}^2=:\abs{f(x)}^2_{h^{L^k}}$. We denote by 
\[dv_X=dv_X(x)=\frac{\Theta^n}{n!}\]
the volume form on $X$ induced by the fixed Hermitian metric $\langle\,\cdot\,,\cdot\,\rangle$ on $\Complex TX$. Then we get natural inner products $(\ |\ )_k$, $(\ |\ )$ on $C^\infty(X, L^k)$ and
$C^\infty(X)$ respectively. More precisely, let $s$ be a local
trivializing section of $L$ on an open set $D\subset X$, $\abs{s}^2=e^{-2\phi}$, then for $u=s^k\Td u, v=s^k\Td v\in C^\infty_0(D,L^k)$, we have
\begin{equation} \label{e-inner} 
(u\ |\ v)_k=\int_X\Td u\ol{\Td v}e^{-2k\phi}dv_X(x).
\end{equation}
We denote by $L^2(X, L^k)$ the completion of $C^\infty(X, L^k)$ with respect to $(\ |\ )_k$.  

Let
$\ddbar_{k}:C^\infty(X, L^k)\To C^\infty(X, L^k\otimes\Lambda^{0,1}T^*X)$ denote
the Cauchy-Riemann operator with values in $L^k$. 
Put $\cali{H}^0(X,L^k)=:\set{f\in C^\infty(X,L^k);\, \ddbar_kf=0}$.
Let $\pit^{(k)}:L^2(X, L^k)\To \cali{H}^0(X, L^k)$
be the Bergman projection. That is, the orthogonal projection onto $\cali{H}^0(X, L^k)$ with respect to $(\ |\ )_k$. 
Let $f\in C^\infty(X)$. Berezin-Toeplitz quantization with symbol $f$ is given by 
\begin{equation} \label{s1-e3-1}
\begin{split}
T^{(k)}_f:L^2(X,L^k)&\To L^2(X,L^k),\\
&u\To(\pit^{(k)}\circ f\circ\pit^{(k)})u.
\end{split}
\end{equation} 
Let 
\[T^{(k)}_f(x,y)\in C^\infty(X\times X,L^k_y\boxtimes L^k_x)\]
be the Schwartz kernel of $T^{(k)}_f$.
Let $s$ be a local section of $L$ over $\Td X$, where $\Td X\subset X$. Then on $\Td X\times\Td X$ we can write
\[T^{(k)}_f(x,y)=s(x)^kT^{(k)}_{f,s}(x, y)s^*(y)^k,\]
where $T^{(k)}_{f,s}(x, y)\in C^\infty(\Td X\times\Td X)$ so that for $x\in\Td X$, $u\in C^\infty_0(\Td X,L^k)$,
\begin{equation} \label{s1-e4}
\begin{split}
(T^{(k)}_{f}u)(x)&=s(x)^k\int_XT^{(k)}_{f,s}(x, y)<u(y), s^*(y)^k>dv_X(y)\\
&=s(x)^k\int_XT^{(k)}_{f,s}(x, y)\Td u(y)dv_X(y),\ \ u=s^k\Td u,\ \ \Td u\in C^\infty_0(\Td X).
\end{split}
\end{equation}
For $x=y$, we can check that the function 
$T^{(k)}_{f,s}(x, x)\in C^\infty(\Td X)$
is independent of the choices of local section $s$. We write $T^{(k)}_{f,s}(x,x)=:T^{(k)}_f(x)$ and call $T^{(k)}_f(x)$ the kernel of Berezin-Toeplitz quantization on the diagonal. 

We introduce some notations.
Let $\Omega$ be an open set of $\Real^N$. 
Let $a(x,k)\in C^\infty(\Omega)$ be a $k$-dependent function. We write 
\[a(x,k)\equiv\sum^{\infty}_{j=0}a_j(x)k^{m-j}\mod O(k^{-\infty})\ \ \mbox{on $\Omega$},\]
where $m\in\mathbb Z$, $a_j(x)\in C^\infty(\Omega)$, $j=0,1,\ldots$, if for every $N\in\mathbb N$, every $\alpha\in\mathbb N_0^{2n}$ and every compact set $K\subset\Omega$, there exists a constant $C_{N,\alpha,K}>0$ independent of $k$, such that 
\[\abs{\pr^\alpha_x\Bigr(a(x,k)-\sum^N_{j=0}k^{m-j}a_j(x)\Bigr)}\leq C_{N,\alpha,K}k^{m-N-1},\] 
$x\in K$, for $k$ large. 

Theorem~\ref{s1-t1} and Theorem~\ref{s1-t2} below are due to Ma-Marinescu~\cite[Lemma 4.6]{MM08b},~\cite[Lemma 7.2.4]{MM07},~\cite[Th.1.1]{MM08b}

\begin{thm} \label{s1-t1} 
Let $(X,\Theta)$ be a compact Hermitian manifold and $(L,h^L)\To X$ a positive line bundle. 
Let $f\in C^\infty(X)$. With the notations used above, we have 
\begin{equation} \label{s1-e5}
T^{(k)}_f(z)\equiv\sum^\infty_{j=0}b_{j,f}(z)k^{n-j}\mod O(k^{-\infty})\ \ \mbox{on $X$}, 
\end{equation} 
where $b_{j,f}(z)\in C^\infty(X)$, $j=0,1,2,\ldots$. 
\end{thm}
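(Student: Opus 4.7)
The plan is to deduce Theorem~\ref{s1-t1} from the known phase-function description of the Bergman projector $\pit^{(k)}$ combined with the complex stationary phase formula of Melin--Sj\"ostrand. The starting point is the fact (Boutet de Monvel--Sj\"ostrand, adapted to the Kodaira Laplacian on a positive line bundle) that, in a local trivialization $s$ of $L$ on a small open set $D\subset X$, one has
\begin{equation*}
\pit^{(k)}_s(x,y) = e^{ik\Psi(x,y)} b(x,y,k) + O(k^{-\infty})\quad\text{on }D\times D,
\end{equation*}
where $\Psi\in C^\infty(D\times D,\Complex)$ satisfies $\Psi(z,z)=0$, $\mathrm{Im}\,\Psi\geq 0$ with equality only on the diagonal, the relevant Cauchy--Riemann type derivatives $\ddbar_x\Psi$ and $\pr_y\Psi$ vanish to infinite order along the diagonal (encoding that $\pit^{(k)}$ is holomorphic in $x$ and antiholomorphic in $y$), and $\Psi(x,y)=-\overline{\Psi(y,x)}$; here $b(x,y,k)$ admits a classical expansion $b(x,y,k)\sim\sum_{j\geq 0}b_j(x,y)k^{n-j}$. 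Moreover, $\pit^{(k)}(x,y)=O(k^{-\infty})$ uniformly outside any given neighborhood of the diagonal.

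From this description, I would write in the trivialization
\begin{equation*}
T^{(k)}_f(z) = \int_X \pit^{(k)}_s(z,w)\, f(w)\, \pit^{(k)}_s(w,z)\, e^{-2k\phi(w)}\, dv_X(w) + O(k^{-\infty}),
\end{equation*}
and use the off-diagonal decay of $\pit^{(k)}$ to restrict the integration to a small neighborhood $U$ of $z$ inside the chart. Inserting the phase-function representation yields the oscillatory integral
\begin{equation*}
T^{(k)}_f(z) = \int_U e^{ik\Phi(z,w)}\, b(z,w,k)\, b(w,z,k)\, f(w)\, dv_X(w) + O(k^{-\infty}),
\end{equation*}
with combined phase $\Phi(z,w):=\Psi(z,w)+\Psi(w,z)+2i\phi(w)$ (or the analogous expression in a convention in which $\phi$ is absorbed into $\Psi$). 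A direct Taylor computation at $w=z$, using the vanishing properties of $\Psi$ along the diagonal together with the formula for its second-order jet in terms of the curvature $R^L$, shows that $w=z$ is the unique non-degenerate critical point of $\Phi$ in $w$ and that $\mathrm{Im}\,\Phi$ vanishes exactly to second order there. Applying the Melin--Sj\"ostrand complex stationary phase formula in $2n$ real variables then yields $T^{(k)}_f(z)\sim\sum_{j\geq 0}b_{j,f}(z)k^{n-j}$: the $k^{-n}$ gain from the $2n$-dimensional stationary phase combines with the $k^{2n}$ coming from the two amplitudes to produce leading order $k^n$.

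The principal obstacle is verifying the hypotheses needed for the Melin--Sj\"ostrand formula, in particular that $\mathrm{Im}\,\Phi > 0$ off $w=z$ and that the complex Hessian of $\Phi$ there is non-degenerate. Both properties are controlled by the second-order jet of $\Psi$ along the diagonal, whose leading quadratic form is exactly the Hermitian form $\dot{R}^L$ expressed in suitable holomorphic coordinates; positivity of $\dot{R}^L$ (Assumption~\ref{s1-a1}) is precisely what is needed. Once stationary phase applies, the construction of the $b_{j,f}(z)$ reduces to a standard bookkeeping of Taylor coefficients of $\Psi$, $\phi$, $f$ and the $b_j$ at $z$, which is the microlocal computation to be developed in later sections.
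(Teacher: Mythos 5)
Your plan is correct and is essentially the derivation the paper itself carries out in Section 3: the Bergman kernel written as $e^{ik\Psi}b$ (Theorem~\ref{s3-tI}), the composition integral of Corollary~\ref{s3-cI}, and the Melin--Sj\"ostrand stationary phase applied to the phase $i\Psi(z,u)+i\Psi(u,w)$, whose nondegeneracy is exactly the positivity of $\dot R^L$, leading to Theorem~\ref{s3-tII} and hence to \eqref{s1-e5} on the diagonal. The only differences are cosmetic (the paper formally attributes Theorem~\ref{s1-t1} to Ma--Marinescu, who prove it by analytic localization, and your weight convention puts $e^{-2k\phi}$ explicitly in the integral rather than absorbing it into $\pit^{(k)}_s$), so your argument matches the paper's own route.
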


Let $f, g\in C^\infty(X)$. As in the discussion after \eqref{s1-e4}, we can also define the kernel of the composition $T^{(k)}_f\circ T^{(g)}_g$ on the diagonal. We write $(T^{(k)}_f\circ T^{(k)}_g)(z)$ to denote the kernel of the composition $T^{(k)}_f\circ T^{(g)}_g$ on the diagonal.

\begin{thm} \label{s1-t2} 
Let $(X,\Theta)$ be a compact Hermitian manifold and $(L,h^L)\To X$ a positive line bundle. 
Let $f, g\in C^\infty(X)$. With the notations used above, the kernel of the composition $T^{(k)}_f\circ T^{(k)}_g$ on the diagonal has an asymptotic expansion 
\begin{equation} \label{e-compI}
(T^{(k)}_f\circ T^{(k)}_g)(z)\equiv\sum^\infty_{j=0}b_{j,f,g}(z)k^{n-j}\mod O(k^{-\infty})\ \ \mbox{on $X$}, 
\end{equation} 
where $b_{j,f,g}(z)\in C^\infty(X)$, $j=0,1,2,\ldots$. 

Moreover, $T^{(k)}_f\circ T^{(k)}_g$ is a Berezin-Toeplitz quantization and it admits the asymptotic expansion 
\begin{equation} \label{e-compII}
T^{(k)}_f\circ T^{(k)}_g\equiv\sum^\infty_{j=0}k^{-j}T^{(k)}_{C_j(f,g)}\mod O(k^{-\infty}),
\end{equation} 
where $C_j$, $j=0,1,\ldots$, are bidifferential operators, in the sense that for any $m\geq0$, there exists $c_m>0$ independent of $k$ with 
\[\norm{T^{(k)}_f\circ T^{(k)}_g-\sum^m_{j=0}k^{-j}T^{(k)}_{C_j(f,g)}}\leq c_mk^{n-m-1},\] 
where $\norm{\cdot}$ denotes the operator norm on the space of bounded operators on $C^0(X,L^k)$.
\end{thm}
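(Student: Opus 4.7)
The plan is to reduce both claims to the Melin-Sj\"ostrand complex stationary phase formula applied to the phase-function form of the kernel of $T^{(k)}_f$ that underlies Theorem~\ref{s1-t1}. In a local trivialization $s$ of $L$, one has an expression of the form
\begin{equation*}
T^{(k)}_{f,s}(x,y)=e^{ik\Psi(x,y)}A_f(x,y,k)+O(k^{-\infty}),\qquad A_f(x,y,k)\sim\sum_{j\ge 0}k^{n-j}A_{j,f}(x,y),
\end{equation*}
where $\Psi$ is the Bergman phase, with $\operatorname{Im}\Psi\ge 0$, $\Psi(x,x)=0$, $d_w(\Psi(x,w)+\Psi(w,x))|_{w=x}=0$, positive-definite Hessian at $w=x$, and $A_{0,f}(x,x)$ proportional to $f(x)$. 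Using \eqref{s1-e4} and the convention that the trivialized kernel $T^{(k)}_{f,s}$ already absorbs the weight $e^{-2k\phi}$, composition becomes
\begin{equation*}
(T^{(k)}_f\circ T^{(k)}_g)_s(x,y)=\int_X T^{(k)}_{f,s}(x,w)\,T^{(k)}_{g,s}(w,y)\,dv_X(w)+O(k^{-\infty}).
\end{equation*}

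For assertion \eqref{e-compI}, set $x=y=z$ and write the integral as an oscillatory integral with phase $\Psi(z,w)+\Psi(w,z)$ and amplitude $A_f(z,w,k)A_g(w,z,k)$. The phase has non-negative imaginary part and a unique non-degenerate critical point at $w=z$. Applying the Melin-Sj\"ostrand stationary phase formula term by term in the expansion of the amplitude produces an asymptotic series $\sum_{j\ge 0}b_{j,f,g}(z)k^{n-j}$, whose coefficients are universally expressible as polynomials in the jets of $f$, $g$, $\Psi$ and the volume form at $z$. Because $\Psi$ and the volume form are globally defined objects, each $b_{j,f,g}(z)$ is in $C^\infty(X)$.

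For \eqref{e-compII}, I would construct the $C_j(f,g)$ inductively. At leading order, stationary phase gives $b_{0,f,g}(z)=b_{0,fg}(z)$, so $R^{(k)}_0:=T^{(k)}_f\circ T^{(k)}_g-T^{(k)}_{fg}$ has diagonal kernel of size $O(k^{n-1})$. Assuming bidifferential $C_0(f,g)=fg,\,C_1(f,g),\ldots,C_{m-1}(f,g)$ have been chosen so that
\begin{equation*}
R^{(k)}_{m-1}:=T^{(k)}_f\circ T^{(k)}_g-\sum_{j=0}^{m-1}k^{-j}T^{(k)}_{C_j(f,g)}
\end{equation*}
has diagonal kernel $O(k^{n-m})$, one reads off $C_m(f,g)(z)$ by equating the leading diagonal coefficient of $k^m R^{(k)}_{m-1}$ with $b_{0,C_m(f,g)}(z)$; this is possible because $b_{0,\cdot}$ is a pointwise multiple of its symbol, and the resulting $C_m(f,g)$ is a bidifferential operator in $(f,g)$ since the stationary phase coefficients depend polynomially on the jets of $f$ and $g$ at $z$.

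The hard step is to upgrade the pointwise diagonal estimates into the operator-norm bound \eqref{e-compII}. For this I would exploit that the full off-diagonal kernel of $R^{(k)}_m$ again admits a phase-function expression $e^{ik\Psi(x,y)}r_m(x,y,k)$ with amplitude of size $O(k^{n-m-1})$ near the diagonal and $O(k^{-\infty})$ away from it. The weight $e^{-k\operatorname{Im}\Psi}$, which is concentrated in a $k^{-1/2}$-neighbourhood of the diagonal of real dimension $2n$, then produces Schur-type $L^\infty\to L^\infty$ bounds of order $k^{n-m-1}\cdot k^{-n}\cdot k^{n}=k^{n-m-1}$, which is the required estimate; the $O(k^{-\infty})$ remainder is negligible. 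This reduction of the norm estimate to kernel estimates is the only non-formal part of the argument.
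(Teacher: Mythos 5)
First, a point of orientation: the paper does not prove Theorem~\ref{s1-t2} at all --- it is quoted from Ma--Marinescu (\cite[Th.~1.1]{MM08b}, \cite[Lemma 7.2.4]{MM07}), and only the coefficients $b_{j,f,g}$ and $C_j(f,g)$ are computed here. That said, your route to the diagonal expansion \eqref{e-compI} is essentially the machinery the paper builds in Sections 3 and 5: write $e^{-k\phi(z)+k\phi(w)}T^{(k)}_{f,s}(z,w)\equiv e^{ik\Psi(z,w)}b_f(z,w,k)$ as in Theorem~\ref{s3-tII}, compose, and apply complex stationary phase to the phase $\Psi(z,w)+\Psi(w,z)$ at its unique nondegenerate critical point $w=z$. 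This part is sound, provided you add the care the paper takes in Section 3 to show that the phase produced by Melin--Sj\"ostrand can again be replaced by $\Psi$ and that the resulting amplitudes inherit the almost-holomorphicity property in \eqref{s3-eXXI}; your recursion $b_{j,f,g}=\sum_{i+l=j}b_{l,C_i(f,g)}$ for reading off $C_j(f,g)$ is also exactly how the paper extracts $C_1,C_2$ in Section 5.

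The genuine gap is in your argument for \eqref{e-compII}. Your induction matches only the \emph{diagonal} values of the coefficients, and from this you assert that the remainder $R^{(k)}_m$ has a full off-diagonal kernel $e^{ik\Psi(x,y)}r_m(x,y,k)$ with amplitude $O(k^{n-m-1})$. For general amplitudes this is false: two symbols can agree on the diagonal and differ off it, so diagonal matching alone does not control the operator norm. What rescues the argument is precisely the structure in \eqref{s3-eXXI}: the amplitudes $b_{j,f}(z,w)$ are almost holomorphic in $z$ and almost antiholomorphic in $w$ at $z=w$, hence their full Taylor expansions along the diagonal are determined by their diagonal restrictions (polarization), while away from a $k^{-1/2}$-neighbourhood of the diagonal everything is $O(k^{-\infty})$ because ${\rm Im\,}\Psi\geq c\abs{z-w}^2$. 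You must invoke this explicitly, and also verify that the kernels of the comparison operators $T^{(k)}_{C_j(f,g)}$ carry the same phase and the same almost-holomorphic amplitudes, before the inductive step closes. Separately, your Schur bookkeeping is off: a kernel bounded by $k^{n-m-1}e^{-ck\abs{x-y}^2}$ gives $\sup_x\int\abs{K(x,y)}\,dv_X(y)=O(k^{-m-1})$, not $k^{n-m-1}\cdot k^{-n}\cdot k^{n}$; the inserted factor $k^n$ is unjustified. The conclusion is still (more than) enough for the stated bound $c_mk^{n-m-1}$, but as written the estimate is not derived correctly.
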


In \cite{MM07}, \cite{MM08b}, the asymptotic expansions \eqref{s1-e5}, \eqref{e-compI} and \eqref{e-compII}, are actually 
proved in greater generality on symplectic manifolds. 

In this paper, we give new methods for computing the coefficients of the expansions \eqref{s1-e5}, \eqref{e-compI}, \eqref{e-compII} and we calculate the first three terms of the expansions. Our purpose is to prove Theorem~\ref{s1-tmain} and Theorem~\ref{s1-tmaincomp} below. Note that we do not assume that $\omega=\Theta$. 

\begin{thm} \label{s1-tmain} 
Let $(X,\Theta)$ be a compact Hermitian manifold and $(L,h^L)\To X$ a positive line bundle. 
Let $f\in C^\infty(X)$. With the notations used above, for
\[b_{0,f}(z),\ \ b_{1,f}(z),\ \ b_{2,f}(z)\]
in \eqref{s1-e5}, we have 
\begin{equation} \label{s1-e16-I}
b_{0,f}(z)=(2\pi)^{-n}f(z)\det\dot{R}^L(z),
\end{equation}
\begin{equation} \label{s1-e16-II} 
b_{1,f}(z)=(2\pi)^{-n}f(z)\det\dot{R}^L(z)\Bigr(\frac{1}{4\pi}\hat r-\frac{1}{8\pi} r\Bigr)(z)
+(2\pi)^{-n}\det\dot{R}^L(z)\Bigr(-\frac{1}{4\pi}\triangle_{\omega}f\Bigr)(z),
\end{equation} 
\begin{equation} \label{s1-e16-III}
\begin{split}
b_{2,f}(z)&=(2\pi)^{-n}f(z)\det\dot{R}^L(z)\Bigr(\frac{1}{128\pi^2}r^2-\frac{1}{32\pi^2}r\hat r+\frac{1}{32\pi^2}(\hat r)^2
-\frac{1}{32\pi^2}\triangle_{\omega}\hat r
-\frac{1}{8\pi^2}\abs{R^{\det}_\Theta}^2_{\omega}\\
&\quad+\frac{1}{8\pi^2}\langle\,{\rm Ric\,}_{\omega}\,,R^{\det}_\Theta\,\rangle_{\omega}+\frac{1}{96\pi^2}\triangle_{\omega}r-
\frac{1}{24\pi^2}\abs{{\rm Ric\,}_{\omega}}^2_{\omega}+\frac{1}{96\pi^2}\abs{R^{TX}_{\omega}}^2_{\omega}\Bigr)(z)\\
&\quad+(2\pi)^{-n}\det\dot{R}^L(z)\Bigr(\frac{1}{16\pi^2}(\triangle_{\omega}f)(-\hat r+\frac{1}{2}r)
-\frac{1}{4\pi^2}\langle\,\ddbar\pr f\,,R^{\det}_\Theta\,\rangle_{\omega}\\
&\quad+\frac{1}{8\pi^2}\langle\,\ddbar\pr f\,,{\rm Ric\,}_{\omega}\,\rangle_{\omega}+\frac{1}{32\pi^2}\triangle^2_{\omega}f\Bigr)(z).
\end{split}
\end{equation}
We remind that $\dot{R}^L$ is given by \eqref{s1-e3} and 
\[\det\dot{R}^L(z)=\lambda_1(z)\cdots\lambda_n(z),\] 
where $\lambda_1(z),\ldots,\lambda_n(z)$ are eigenvalues of $\dot{R}^L(z)$.
\end{thm}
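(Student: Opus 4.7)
The plan is to obtain a local oscillatory-integral representation of the Toeplitz kernel $T^{(k)}_f(z)$, reduce it to a single stationary phase integral on a coordinate neighborhood, and then read off the coefficients $b_{j,f}$ by applying the complex-phase Melin--Sj\"ostrand stationary phase expansion and matching the outcome to geometric invariants.

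First I would fix $p\in X$ and a local holomorphic trivializing section $s$ of $L$ near $p$ with weight $\phi$. On a small neighborhood of $p$, the Bergman kernel admits a phase-function description of the form
\[
\pit^{(k)}_s(x,y)=e^{k\Psi(x,y)}b(x,y,k),\qquad b(x,y,k)\equiv\sum_{j=0}^\infty k^{n-j}b_j(x,y)\mod O(k^{-\infty}),
\]
where $\Psi$ is an almost-holomorphic off-diagonal extension of $2\phi$ (so that $\Psi(z,\ol z)=2\phi(z)$, $\ddbar_x\Psi$ and $\pr_y\Psi$ vanish to infinite order on the diagonal), and $b_0(z,\ol z)=(2\pi)^{-n}\det\dot R^L(z)$. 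Substituting this into $T^{(k)}_f=\pit^{(k)}\circ f\circ\pit^{(k)}$ and evaluating on the diagonal yields
\[
T^{(k)}_f(p)=\int_X e^{k\chi(z)}\,f(z)\,b(z_p,z,k)\,b(z,z_p,k)\,dv_X(z)+O(k^{-\infty}),
\]
with phase $\chi(z)=\Psi(z_p,z)+\Psi(z,z_p)-2\phi(z)$, cut off to a small neighborhood of $p$.

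Next I would choose holomorphic coordinates centered at $p$ of Bochner--Kähler type adapted to $\omega$, so that $\omega_{j,k}(0)$ is diagonal with entries $\lambda_1(p),\dots,\lambda_n(p)$ and the first derivatives of $\omega_{j,k}$ vanish at $0$. In these coordinates $\chi$ vanishes to second order at $0$, has only $z=0$ as critical point in a small ball, and the Hessian is nondegenerate with determinant that recovers $(2\pi)^n\det\dot R^L(p)$ up to a known factor. Applying the Melin--Sj\"ostrand stationary phase formula to the above oscillatory integral produces an expansion
\[
T^{(k)}_f(p)\sim k^{-n}\Bigr(\det\tfrac{1}{2\pi}(-\chi''(0))\Bigr)^{-1/2}\sum_{\ell\geq 0}k^{-\ell}\bigl(L_\ell A\bigr)(0),\qquad A=f\,b(z_p,\cdot,k)\,b(\cdot,z_p,k),
\]
where each $L_\ell$ is an explicit differential operator of order $2\ell$ built from the third- and higher-order Taylor jets of $\chi$ at $0$ and the volume density $dv_X$. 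Extracting the contributions of orders $k^n$, $k^{n-1}$, $k^{n-2}$ gives closed formulas for $b_{0,f}(p)$, $b_{1,f}(p)$, $b_{2,f}(p)$ as polynomials in the jets of $\phi$, of $\omega_{j,k}$, of $\Theta_{j,k}$, of $f$, and of $b_0$, $b_1$ at $0$.

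The final step is to reorganize these coordinate expressions into global invariants: jets of $\phi$ produce $\dot R^L$; jets of $\log V_\omega$ and $\log V_\Theta$ recover $r$, $\hat r$ and $R^{\det}_\Theta$ via \eqref{s1-e11}--\eqref{s1-e12}; jets of $h=(\omega_{j,k})$ yield $R^{TX}_\omega$, $\mathrm{Ric}_\omega$, $\abs{R^{TX}_\omega}^2_\omega$, $\abs{\mathrm{Ric}_\omega}^2_\omega$; and jets of $f$ give $\triangle_\omega f$, $\triangle_\omega^2 f$ together with the pairings $\langle\,\ddbar\pr f\,,\mathrm{Ric}_\omega\,\rangle_\omega$ and $\langle\,\ddbar\pr f\,,R^{\det}_\Theta\,\rangle_\omega$. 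Collecting terms of the same order then yields \eqref{s1-e16-I}, \eqref{s1-e16-II}, \eqref{s1-e16-III}. I expect the main obstacle to be the bookkeeping in the stationary phase expansion at order $k^{n-2}$: one must carry the fourth-order jet of $\chi$, the second-order jet of $b_0$ and the leading value of $b_1$, and then translate a large polynomial in coordinate derivatives into the intrinsic combination of scalar, Ricci and full curvature terms appearing in \eqref{s1-e16-III}; the latter recognition step relies crucially on the particular choice of normal coordinates diagonalizing $\dot R^L$ at $p$ and on the structural identities among $r$, $\hat r$, $R^{\det}_\Theta$ and $\mathrm{Ric}_\omega$.
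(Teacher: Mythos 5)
Your overall strategy coincides with the paper's: localize the Toeplitz kernel via the phase-function form of the Bergman kernel, write $T^{(k)}_f(p)$ as a single oscillatory integral with phase $\Psi(z_p,z)+\Psi(z,z_p)$, apply stationary phase in Ruan-type coordinates where $\phi=\sum_j\lambda_j\abs{z_j}^2+O(\abs{z}^4)$, and then translate the resulting jet polynomials into the invariants $r$, $\hat r$, $R^{\det}_\Theta$, ${\rm Ric\,}_\omega$, $R^{TX}_\omega$ using the dictionary of Section 2. Up to notation (your $e^{k\Psi}$ versus the paper's $e^{ik\Psi}$ after removing the weights, and Melin--Sj\"ostrand versus H\"ormander's Theorem 7.7.5 for the actual expansion, which is legitimate here since the critical point is real) this is the same computation.

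There is, however, one concrete gap. The stationary phase expansion expresses $b_{1,f}(p)$ and $b_{2,f}(p)$ not only through jets of $\phi$, $V_\Theta$ and $f$, but also through the \emph{on-diagonal values and first jets of the Bergman amplitudes} $b_1$ and $b_2$: at order $k^{n-1}$ one gets $b_{1,f}(p)=2f(p)b_1(p,p)+(\text{explicit terms})$, and at order $k^{n-2}$ the terms $2f(p)b_2(p,p)$, $f(p)b_1(p,p)^2$ and $\triangle_0\bigl(V_\Theta f(b_0(0,z)b_1(z,0)+b_1(0,z)b_0(z,0))\bigr)(0)$ appear. You take $b_0(z,z)=(2\pi)^{-n}\det\dot R^L(z)$ as known input, but you never say how $b_1(p,p)$ and $b_2(p,p)$ are obtained; in the non-polarized setting these are exactly the quantities $(2\pi)^{-n}\det\dot R^L\,(\tfrac{1}{4\pi}\hat r-\tfrac{1}{8\pi}r)$, etc., that one is trying to compute, and without them the formulas \eqref{s1-e16-II} and \eqref{s1-e16-III} cannot be closed. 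The paper resolves this with a self-normalization: since $T^{(k)}_1=\pit^{(k)}$, one has $b_{j,1}(p)=b_j(p,p)$, so setting $f=1$ in the very same stationary-phase identities determines $b_0(p,p)$, $b_1(p,p)$, $b_2(p,p)$ recursively, after which the general $f$ case follows. A related smaller omission: to evaluate the off-diagonal jets such as $\pr_{z_s}b_1(z,0)\vert_{z=0}$ one needs that the amplitudes $b_j(z,w)$, and not just the phase, are almost holomorphic in $z$ and almost anti-holomorphic in $w$ (so that off-diagonal jets reduce to diagonal ones); this is the content of \eqref{s3-eIII} and Lemmas~\ref{s4-lI}, \ref{s4-lII}, and should be stated as part of the input.
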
  

\begin{thm} \label{s1-tmaincomp} 
Let $(X,\Theta)$ be a compact Hermitian manifold and $(L,h^L)\To X$ a positive line bundle. 
Let $f, g\in C^\infty(X)$. With the notations used above, for
\[b_{0,f,g}(z),\ \ b_{1,f,g}(z),\ \ b_{2,f,g}(z)\]
in \eqref{e-compI}, we have 
\begin{equation} \label{e-compmainI}
b_{0,f,g}(z)=(2\pi)^{-n}f(z)g(z)\det\dot{R}^L(z),
\end{equation}
\begin{equation} \label{e-compmainII} 
\begin{split}
b_{1,f,g}(z)&=(2\pi)^{-n}f(z)g(z)\det\dot{R}^L(z)\Bigr(\frac{1}{4\pi}\hat r-\frac{1}{8\pi} r\Bigr)(z)\\
&\quad+(2\pi)^{-n}\det\dot{R}^L(z)\Bigr(-\frac{1}{4\pi}\bigr(g\triangle_{\omega}f+f\triangle_{\omega}g\bigr)+\frac{1}{2\pi}\langle\,\ddbar f\,,\ddbar\ol g\,\rangle\,_\omega\Bigr)(z)\\
&=b_{1,fg}(z)+(2\pi)^{-n}\det\dot{R}^L(z)\Bigr(-\frac{1}{2\pi}\langle\,\pr f\,,\pr\ol g\,\rangle_\omega\Bigr)(z),
\end{split}
\end{equation} 
\begin{equation} \label{e-compmainIII}
\begin{split}
b_{2,f,g}(z)&=b_{2,fg}(z)+(2\pi)^{-n}\det\dot{R}^L(z)\Bigr(-\frac{1}{4\pi^2}\langle\,\ddbar g\wedge\pr f\,,{\rm Ric\,}_\omega\,\rangle_\omega+\frac{1}{4\pi^2}\langle\,\ddbar g\wedge\pr f\,,R^{\det}_\Theta\,\rangle_\omega \\
&\quad+\frac{1}{8\pi^2}\langle\,\pr\triangle_\omega f\,,\pr\ol g\,\rangle_\omega
+\frac{1}{8\pi^2}\langle\,\ddbar\triangle_\omega g\,,\ddbar\,\ol f\,\rangle_\omega
-\frac{1}{8\pi^2}\langle\,D^{1,0}\pr f\,,D^{1,0}\pr\ol g\,\rangle_\omega\\
&\quad-\frac{1}{4\pi^2}\langle\,\ddbar\pr f\,,\ddbar\pr\ol g\,\rangle_\omega+\frac{1}{8\pi^2}\langle\,\pr f\,,\pr\ol g\,\rangle_\omega(-\hat r+\frac{1}{2}r)\Bigr)(z).
\end{split}
\end{equation}

Moreover, for $C_0(f,g)$, $C_1(f,g)$, $C_2(f,g)$ in \eqref{e-compII}, we have 
\begin{equation} \label{e-compmainIV}
C_0(f,g)=fg,
\end{equation}
\begin{equation} \label{e-compmainV} 
C_1(f,g)=-\frac{1}{2\pi}\langle\,\pr f\,,\pr\ol g\,\rangle_\omega,
\end{equation} 
\begin{equation} \label{e-compmainVI}
C_2(f,g)=\frac{1}{8\pi^2}\langle\,D^{1,0}\pr f\,,D^{1,0}\pr\ol g\,\rangle_\omega+\frac{1}{4\pi^2}\langle\,\ddbar g\wedge\pr f\,,R^{\det}_\Theta\,\rangle_\omega.
\end{equation}
\end{thm}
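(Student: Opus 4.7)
The plan is to reduce the computation of the diagonal coefficients $b_{j,f,g}(z)$ to a complex stationary phase calculation in two $n$-dimensional complex integration variables, and then to extract the bidifferential operators $C_j(f,g)$ algebraically by comparison with Theorem~\ref{s1-tmain}. Using $(\pit^{(k)})^2=\pit^{(k)}$ one has
\[T^{(k)}_f\circ T^{(k)}_g=\pit^{(k)}\circ f\circ\pit^{(k)}\circ g\circ\pit^{(k)},\]
so the diagonal kernel at $z$ is a double integral against three Bergman kernels. The phase function representation of $\pit^{(k)}$ that underlies the proof of Theorem~\ref{s1-tmain} gives, near the diagonal and modulo $O(k^{-\infty})$, an expansion $\pit^{(k)}(x,y)\sim e^{ik\psi(x,y)}\sum_{j\geq 0}k^{n-j}\widetilde a_j(x,y)$ for a complex phase $\psi$. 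Substituting the three factorizations yields
\[(T^{(k)}_f\circ T^{(k)}_g)(z)\equiv\int\!\!\int e^{ik\Psi(z,z_1,z_2)}A(z,z_1,z_2,k)f(z_1)g(z_2)\,dv_X(z_1)dv_X(z_2)\]
modulo $O(k^{-\infty})$, with $\Psi=\psi(z,z_1)+\psi(z_1,z_2)+\psi(z_2,z)$ and $A$ the product of the three Bergman amplitudes. The phase $\Psi$ has a unique nondegenerate complex critical point at $(z_1,z_2)=(z,z)$, where $\Psi$ and its first derivatives vanish, so the Melin--Sj\"ostrand stationary phase theorem in $4n$ real variables applies and produces an asymptotic series of order $k^{3n-2n}=k^n$.

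First I would work in local holomorphic coordinates centered at $z$ in which $\omega$ (which is K\"ahler since $\omega=\frac{\sqrt{-1}}{2\pi}R^L$) is in Bochner normal form and $\Theta$ is diagonalized at $z$, so that the Taylor coefficients of $\psi$ and of the $\widetilde a_j$ at $(z,z)$ reduce to their standard $\Complex^n$ model values. The Hessian of $\Psi$ at $(z,z)$ is a $4n\times 4n$ matrix of $2\times 2$ block form whose determinant and inverse are computed by a direct block inversion; the leading stationary phase term then gives
\[b_{0,f,g}(z)=(2\pi)^{-n}f(z)g(z)\det\dot R^L(z),\]
and hence $C_0(f,g)=fg$. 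The first subleading term decomposes into three contributions: (i) the scalar curvature combination $\frac{1}{4\pi}\hat r-\frac{1}{8\pi}r$ from the subleading Bergman amplitudes, (ii) the one-variable Laplacians $\triangle_\omega f$ and $\triangle_\omega g$ from the inverse Hessian applied to second derivatives of $f$ or $g$ alone, and (iii) the genuinely new cross term $\langle\ddbar f,\ddbar\ol g\rangle_\omega$ from the off-diagonal block of the inverse Hessian acting on $f(z_1)g(z_2)$. This produces \eqref{e-compmainII}. The algebraic identity
\[b_{j,f,g}=\sum_{l+m=j}b_{l,C_m(f,g)},\]
obtained by inserting \eqref{e-compII} into \eqref{s1-e5}, together with Theorem~\ref{s1-tmain}, then gives $C_1(f,g)=-\frac{1}{2\pi}\langle\pr f,\pr\ol g\rangle_\omega$. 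Iterating the stationary phase expansion one more step produces $b_{2,f,g}$, and the same comparison isolates $C_2(f,g)$.

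The main obstacle is the second-order stationary phase computation for $b_{2,f,g}$. Beyond the block inverse Hessian one has to track all contractions of the fourth-order Taylor coefficients of $\Psi$ against two copies of the inverse Hessian, the second subleading amplitudes $\widetilde a_2$ of each Bergman factor, mixed amplitude--phase cross terms, and all cross derivatives of $f(z_1)g(z_2)$ of total order up to four -- the last family being absent from the proof of Theorem~\ref{s1-tmain}. The resulting expression must then be rewritten in terms of the intrinsic objects $R^{\det}_\Theta$, ${\rm Ric\,}_\omega$, $R^{TX}_\omega$, $D^{1,0}\pr$, $\ddbar\pr$ and $\triangle^2_\omega$ appearing in \eqref{e-compmainIII}, and this identification is the most delicate part of the bookkeeping. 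Working in coordinates in which the Chern connection of $\omega$ (and the associated connection on $\Lambda^{0,1}T^*X$) vanishes at $z$ will make each geometric quantity appear in its canonical coordinate form, reducing the final step to a finite algebraic verification.
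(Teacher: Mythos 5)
Your plan is sound and would prove the theorem, but it takes a genuinely different route from the paper. You propose a one-shot Melin--Sj\"ostrand stationary phase in $4n$ real variables applied to the triple-kernel representation of $\pit^{(k)}\circ f\circ\pit^{(k)}\circ g\circ\pit^{(k)}$, with phase $\Psi(z,z_1)+\Psi(z_1,z_2)+\Psi(z_2,z)$. The paper instead proceeds in two stages: it first proves (Theorem~\ref{s3-tII}) that $T^{(k)}_f$ itself has a kernel of the form $e^{ik\Psi(z,w)}b_f(z,w,k)$ with the \emph{same} phase $\Psi$ as the Bergman kernel and with amplitudes $b_{j,f}(z,w)$ that are almost holomorphic in $z$ and almost anti-holomorphic in $w$; Corollary~\ref{s3-cII} then writes $(T^{(k)}_f\circ T^{(k)}_g)(z)$ as a single $2n$-dimensional integral with the phase $F=\Psi(0,\cdot)+\Psi(\cdot,0)$ already analyzed in Section~4, so the entire stationary-phase machinery (the operators $L_j$, the powers of $\triangle_0$, formula \eqref{s4-eXII}) is reused verbatim with $fb_s(0,z)b_t(z,0)$ replaced by $b_{s,f}(0,z)b_{t,g}(z,0)$. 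The only genuinely new inputs are Lemmas~\ref{s5-lI} and~\ref{s5-lII}, which show that $b_{0,f}(0,z)b_{0,g}(z,0)$ equals $b_0(0,z)b_0(z,0)$ times the polarized product $F(z)=\sum f_{\ol\alpha}(0)g_\beta(0)\ol z^\alpha z^\beta/(\alpha!\beta!)$, whence all Section~4 formulas apply with $f$ replaced by $F$; the cross terms $\langle\ddbar f,\ddbar\ol g\rangle_\omega$, $\langle\ddbar g\wedge\pr f,\cdot\rangle_\omega$, $\langle D^{1,0}\pr f,D^{1,0}\pr\ol g\rangle_\omega$ then come out of $\triangle_0^\nu F$. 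In your scheme these same cancellations (only $\ol z_1$-derivatives of $f$ pairing with $z_2$-derivatives of $g$) must instead be extracted from the off-diagonal blocks of the inverse of a $4n\times 4n$ Hessian and from contractions of the fourth-order Taylor coefficients of the three-variable phase, which is correct in principle but substantially heavier at the $b_{2,f,g}$ stage; the paper's intermediate Theorem~\ref{s3-tII} is precisely the device that keeps that bookkeeping manageable (and is of independent interest). Your extraction of $C_j(f,g)$ from $b_{j,f,g}=\sum_{l+m=j}b_{l,C_m(f,g)}$ together with Theorem~\ref{s1-tmain} and Proposition~\ref{s2-p1} is exactly what the paper does.
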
 

\begin{rem}\label{r-mainbis}
With the notations used above, from \eqref{s1-e16-III}, \eqref{e-compmainIII} and Proposition~\ref{s2-p1}, we can rewrite $b_{2,f,g}$ as follows: 
\begin{equation*}
\begin{split} 
b_{2,f,g}(z)&=(2\pi)^{-n}\det\dot R^L(z)f(z)g(z)\Bigr(\frac{1}{32\pi^2}\bigr(\hat r\bigr)^2-\frac{1}{32\pi^2}\hat rr\\
&\quad+\frac{1}{128\pi^2}r^2-\frac{1}{32\pi^2}\bigr(\triangle_\omega\hat r\bigr)+\frac{1}{8\pi^2}\langle\,{\rm Ric\,}_\omega\,,R^{\det}_\Theta\,\rangle_\omega-\frac{1}{8\pi^2}\abs{R^{\det}_\Theta}^2_\omega\\
&\quad+\frac{1}{96\pi^2}\bigr(\triangle_\omega r\bigr)-\frac{1}{24\pi^2}\abs{{\rm Ric\,}_\omega}^2_\omega+\frac{1}{96\pi^2}\abs{R^{TX}_\omega}^2_\omega\Bigr)(z)\\
&\quad+(2\pi)^{-n}\det\dot R^L(z)\Bigr(\bigr(\frac{1}{8\pi^2}\hat r-\frac{1}{16\pi^2}r\bigr)\langle\,\ddbar f\,,\ddbar\ol g\,\rangle_\omega+\frac{1}{16\pi^2}f\bigr(\triangle_\omega g\bigr)\bigr(-\hat r+\frac{1}{2}r\bigr)\\
&\quad+\frac{1}{16\pi^2}g\bigr(\triangle_\omega f\bigr)\bigr(-\hat r+\frac{1}{2}r\bigr)-\frac{1}{4\pi^2}f\langle\,\ddbar\pr g\,,R^{\det}_\Theta\,\rangle_\omega-\frac{1}{4\pi^2}g\langle\,\ddbar\pr f\,,R^{\det}_\Theta\,\rangle_\omega\\
&\quad+\frac{1}{8\pi^2}f\langle\,\ddbar\pr g\,,{\rm Ric\,}_\omega\,\rangle_\omega
+\frac{1}{8\pi^2}g\langle\,\ddbar\pr f\,,{\rm Ric\,}_\omega\,\rangle_\omega
+\frac{1}{4\pi^2}\langle\,\ddbar f\wedge\pr g\,,{\rm Ric\,}_\omega\,\rangle_\omega\\
&\quad-\frac{1}{4\pi^2}\langle\,\ddbar f\wedge\pr g\,,R^{\det}_\Theta\,\rangle_\omega
-\frac{1}{8\pi^2}\langle\,\ddbar f\,,\ddbar\triangle_\omega\ol g\,\rangle_\omega-\frac{1}{8\pi^2}\langle\,\ddbar\triangle_\omega f\,,\ddbar\ol g\,\rangle_\omega\\
&\quad+\frac{1}{8\pi^2}\langle\,D^{0,1}\ddbar f\,,D^{0,1}\ddbar\ol g\,\rangle_\omega+\frac{1}{32\pi^2}f\bigr(\triangle^2_\omega g\bigr)+\frac{1}{32\pi^2}g\bigr(\triangle^2_\omega f\bigr)\\
&\quad+\frac{1}{16\pi^2}\bigr(\triangle_\omega g\bigr)\bigr(\triangle_\omega f\bigr)\Bigr)(z).
\end{split}
\end{equation*} 
\end{rem} 

\begin{rem}\label{r-mainbisaddb}
(I)\,
In \cite{MM10}, Ma-Marinescu calculated the coefficients $b_{0,f}$, $b_{1,f}$, $b_{2,f}$, $b_{0,f,g}$, $b_{1,f,g}$, $b_{2,f,g}$ and $C_0(f,g)$, $C_1(f,g)$, $C_2(f,g)$, in the case when $\omega=\Theta$ 
and in the presence of a twisting vector bundle. In~\cite[section 2.7]{MM11}, they observed that one can reduce the 
calculation in the case when $\omega\neq\Theta$ to the case when $\omega=\Theta$ by the following trick. Let $b_{j,f}$ be as in \eqref{s1-e5} corresponding to the case $\omega\neq\Theta$. Let $E$ be the trivial line bundle over $X$ (i.e. $E=\Complex$) and endow $E$ with the metric $\abs{1}^2=:(2\pi)^{n}\bigr(\det\dot R^L\bigr)^{-1}$. Endow $X$ with the Hermitian metric $\omega=\Theta$ and consider the $L^2$ inner product induced by the metric of $E$, $\omega$, and $h^{L^k}$ as in \eqref{e-inner} and let $T^{(k)}_{f,E}$ be the Berezin-Toeplitz quantization with values in $L^k\otimes E$ and let $b_{j,f,E}$ be as in \eqref{s1-e5}. Then by~\cite[(2.110)]{MM11}, we have 
\[b_{j,f}=(2\pi)^{-n}\det\dot R^Lb_{j,f,E},\ \ j=0,1,\ldots.\]
One can check that this formulas coincide with those from Theorem~\ref{s1-tmain}. Note also that the formulas for the coefficients $C_j(f,g)$
are the same in the case when $\omega=\Theta$ and in the case when $\omega\neq\Theta$, see also ~\cite[(2.110)]{MM11}.
\\[2pt]
\
(II)\,Note that we can also include a twisting bundle in our computation but due to the fact we already consider the case $\omega\neq\Theta$ the formulas become quite long.

\end{rem}

We say that $X$ is polarized if $\omega=\Theta$. We can check that if $X$ is polarized then 
\begin{equation} \label{s1-e17}
\det\dot{R}^L=(2\pi)^n,\ \ r=\hat r,\ \ {\rm Ric\,}_\omega=R^{\det}_\Theta.
\end{equation}
From this observation and Theorem~\ref{s1-tmain}, Theorem~\ref{s1-tmaincomp}, we deduce the following results of Ma-Marinescu~\cite{MM10}

\begin{cor} \label{s1-c1}
If $X$ is polarized, then for $b_{0,f}(z), b_{1,f}(z), b_{2,f}(z)$ in \eqref{s1-e5}, we have
\begin{equation} \label{s1-e18}
\begin{split} 
b_{0,f}(z)&=f(z),\\
b_{1,f}(z)&=\frac{1}{8\pi}(r f)(z)-\frac{1}{4\pi}(\triangle_{\omega}f)(z),\\
b_{2,f}(z)&=f(z)\Bigr(\frac{1}{128\pi^2}r^2-\frac{1}{48\pi^2}\triangle_{\omega}r
-\frac{1}{24\pi^2}\abs{{\rm Ric\,}_{\omega}}^2_{\omega}+\frac{1}{96\pi^2}\abs{R^{TX}_{\omega}}^2_{\omega}\Bigr)(z)\\
&\quad+\Bigr(-\frac{1}{32\pi^2}(\triangle_{\omega}f)r-\frac{1}{8\pi^2}\langle\,\ddbar\pr f\,,{\rm Ric\,}_{\omega}\,\rangle_{\omega}
+\frac{1}{32\pi^2}\triangle^2_{\omega}f\Bigr)(z).
\end{split}
\end{equation} 

Moreover, for $b_{0,f,g}(z), b_{1,f,g}(z), b_{2,f,g}(z)$ in \eqref{e-compI}, we have 
\begin{equation} \label{s1-e19}
\begin{split} 
b_{0,f,g}(z)&=f(z)g(z),\\
b_{1,f,g}(z)&=f(z)g(z)\frac{1}{8\pi}r(z)+\frac{1}{2\pi}\langle\,\ddbar f\,,\ddbar\ol g\,\rangle_\omega(z)
-\frac{1}{4\pi}\bigr(g\triangle_{\omega}f+f\triangle_{\omega}g\bigr)(z),\\
b_{2,f,g}(z)&=b_{2,fg}(z)+\frac{1}{8\pi^2}\langle\,\pr\triangle_\omega f\,,\pr\ol g\,\rangle_\omega(z)
+\frac{1}{8\pi^2}\langle\,\ddbar\triangle_\omega g\,,\ddbar\,\ol f\,\rangle_\omega(z)\\
&\quad-\frac{1}{8\pi^2}\langle\,D^{1,0}\pr f\,,D^{1,0}\pr\ol g\,\rangle_\omega(z)
-\frac{1}{4\pi^2}\langle\,\ddbar\pr f\,,\ddbar\pr\ol g\,\rangle_\omega(z)\\
&\quad-\frac{1}{16\pi^2}\langle\,\pr f\,,\pr\ol g\,\rangle_\omega(z)r(z).
\end{split}
\end{equation} 
\end{cor}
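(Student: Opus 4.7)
The plan is to specialize Theorem~\ref{s1-tmain} and Theorem~\ref{s1-tmaincomp} to the polarized case by substituting the three identities in~\eqref{s1-e17}. No conceptual obstacle is present; the argument is entirely a bookkeeping exercise, and the main point requiring a little care is tracking a handful of cancellations among the $r$-quadratic and curvature-squared terms.

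First, I would verify~\eqref{s1-e17}. When $\omega=\Theta$ the local Hermitian matrices $(\omega_{j,k})$ and $(\Theta_{j,k})$ coincide, so $V_\omega=V_\Theta$ and~\eqref{s1-e11} gives $r=\hat r$. Since the metrics $\langle\,\cdot\,,\cdot\,\rangle$ and $\langle\,\cdot\,,\cdot\,\rangle_\omega$ on $T^{1,0}X$ now agree, the induced Chern connections on $\det T^{1,0}X$ also agree; taking the trace of~\eqref{s1-e13} and comparing with~\eqref{s1-e12} and~\eqref{s1-e15} yields ${\rm Ric\,}_\omega=R^{\det}_\Theta$. For the third identity, $\omega=\frac{\sqrt{-1}}{2\pi}R^L$ together with~\eqref{s1-e7} fixes the local coefficients of $R^L$, and unfolding~\eqref{s1-e3} in the basis $\{\pr/\pr z_j\}$ gives $\dot R^L=2\pi\cdot\mathrm{Id}$ whenever $\omega=\Theta$, so $\det\dot R^L=(2\pi)^n$.

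Second, I would substitute these identities into~\eqref{s1-e16-I}--\eqref{s1-e16-III}. The prefactor $(2\pi)^{-n}\det\dot R^L$ reduces to $1$, and $\tfrac{1}{4\pi}\hat r-\tfrac{1}{8\pi}r$ collapses to $\tfrac{1}{8\pi}r$, giving $b_{0,f}$ and $b_{1,f}$ at once. For $b_{2,f}$, I would group the terms of~\eqref{s1-e16-III} into four subgroups: the three $r$-quadratic terms combine as $(\tfrac{1}{128}-\tfrac{1}{32}+\tfrac{1}{32})\pi^{-2}r^2=\tfrac{1}{128\pi^2}r^2$; the two Laplacian-of-$r$ terms combine as $(-\tfrac{1}{32}+\tfrac{1}{96})\pi^{-2}\triangle_\omega r=-\tfrac{1}{48\pi^2}\triangle_\omega r$; the three curvature-squared terms combine as $(-\tfrac{1}{8}+\tfrac{1}{8}-\tfrac{1}{24})\pi^{-2}\abs{{\rm Ric\,}_\omega}^2_\omega=-\tfrac{1}{24\pi^2}\abs{{\rm Ric\,}_\omega}^2_\omega$; and $\tfrac{1}{96\pi^2}\abs{R^{TX}_\omega}^2_\omega$ is unchanged. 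In the $f$-dependent bracket, $-\hat r+\tfrac{1}{2}r=-\tfrac{1}{2}r$ and $R^{\det}_\Theta={\rm Ric\,}_\omega$ reduce the three non-trivial summands to $-\tfrac{1}{32\pi^2}(\triangle_\omega f)r$, $-\tfrac{1}{8\pi^2}\langle\,\ddbar\pr f\,,{\rm Ric\,}_\omega\,\rangle_\omega$ and $\tfrac{1}{32\pi^2}\triangle^2_\omega f$, matching the second line of~\eqref{s1-e18}.

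Finally, the same substitution applied to~\eqref{e-compmainI}--\eqref{e-compmainIII} yields~\eqref{s1-e19}. The only novelty in $b_{2,f,g}$ is that the pair $\mp\tfrac{1}{4\pi^2}\langle\,\ddbar g\wedge\pr f\,,\cdot\,\rangle_\omega$ cancels once ${\rm Ric\,}_\omega=R^{\det}_\Theta$ is imposed, while the factor $(-\hat r+\tfrac{1}{2}r)$ of $\langle\,\pr f\,,\pr\ol g\,\rangle_\omega$ becomes $-\tfrac{1}{2}r$, producing the final summand $-\tfrac{1}{16\pi^2}\langle\,\pr f\,,\pr\ol g\,\rangle_\omega\,r$; all remaining terms in~\eqref{e-compmainIII} transcribe verbatim modulo the trivial prefactor, and $b_{2,fg}$ is known from the already established formula for $b_{2,f}$ applied to $fg$.
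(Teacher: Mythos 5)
Your proposal is correct and follows exactly the paper's own route: the paper also deduces Corollary~\ref{s1-c1} by first noting the identities \eqref{s1-e17} (which follow since $\omega=\Theta$ forces $V_\omega=V_\Theta$, hence $r=\hat r$ and ${\rm Ric\,}_\omega=-\ddbar\pr\log V_\omega=-\ddbar\pr\log V_\Theta=R^{\det}_\Theta$, and $\dot R^L=2\pi\,{\rm Id}$) and then substituting them into Theorem~\ref{s1-tmain} and Theorem~\ref{s1-tmaincomp}. Your arithmetic for the cancellations (the $r$-quadratic terms, the $\triangle_\omega r$ terms, the curvature-squared terms, and the $\langle\ddbar g\wedge\pr f,\cdot\rangle_\omega$ pair) checks out against \eqref{s1-e18} and \eqref{s1-e19}.
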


\section{The Taylor expansions of some global functions at a given point}

In this section, we will use the same notations as section 1. For a given point $p\in X$, we may take local holomorphic 
coordinates $z=(z_1,z_2,\ldots,z_n)$ and local trivializing section $s$ of $L$ defined in some small 
open neighborhood of $p$ such that 
\begin{equation} \label{s2-eI}
\begin{split}
&z(p)=0,\\
&\phi(z)=\sum^n_{j=1}\lambda_j\abs{z_j}^2+\phi_1(z),\\
&\phi_1(z)=O(\abs{z})^4),\ \ \frac{\pr^{\abs{\alpha}+\abs{\beta}}\phi_1}{\pr z^\alpha\pr\ol z^\beta}(0)=0\ \ \mbox{if $\abs{\alpha}\leq1$ or $\abs{\beta}\leq1$},\ \ \alpha, \beta\in\mathbb N_0^n,\\
&\Theta(z)=\sqrt{-1}\sum^n_{j=1}dz_j\wedge d\ol z_j+O(\abs{z}).
\end{split}
\end{equation}
(This is always possible. See Ruan~\cite{Ruan98}.)
In this section, we work with this local coordinates $z$ and we identify $p$ with the point $z=0$. 

From \eqref{s2-eI}, we can check that $2\lambda_1,\ldots,2\lambda_n$, are eigenvalues of $\dot{R}^L(0)$ and 
\begin{equation} \label{s2-eII}
\omega=\frac{\sqrt{-1}}{\pi}\sum^n_{j=1}\lambda_jdz_j\wedge d\ol z_j+\frac{\sqrt{-1}}{\pi}\sum^n_{j,k=1}\frac{\pr^2\phi_1}{\pr z_j\pr\ol z_k}dz_j\wedge d\ol z_k.
\end{equation}

The following lemma follows from some straightforward but elementary computations. We omit the proof. 

\begin{lem} \label{s2-lI}
We have 
\begin{align} 
&\mathcal{R}_{j,k}(z)=\frac{1}{\lambda_j}\sum^n_{s,t=1}\frac{\pr^4\phi}{\pr\ol z_j\pr z_k\pr\ol z_s\pr z_t}(z)d\ol z_s\wedge dz_t+O(\abs{z}^2),\ \ j,k=1,\ldots,n.\label{s2-eIII}\\
&\triangle_\omega=-2\pi\sum^n_{j=1}\frac{1}{\lambda_j}\frac{\pr^2}{\pr z_j\pr\ol z_j}+2\pi\sum^n_{j,k=1}\frac{1}{\lambda_j\lambda_k}\frac{\pr^2\phi_1}{\pr\ol z_j\pr z_k}\frac{\pr^2}{\pr z_j\pr\ol z_k}+O(\abs{z}^3).\label{s2-eIV}\\
&\log V_\omega=\sum^n_{s=1}\log\frac{\pr^2\phi}{\pr\ol z_s\pr z_s}-\sum_{s<t,1\leq s,t\leq n}\frac{1}{\lambda_s\lambda_t}\frac{\pr^2\phi}{\pr z_s\pr\ol z_t}\frac{\pr^2\phi}{\pr\ol z_s\pr z_t}+O(\abs{z}^5).\label{s2-eV}
\end{align} 
(We remind that $\mathcal{R}_{j,k}$ and $V_\omega$ are given by \eqref{s1-e13} and \eqref{s1-e10} respectively.)
\end{lem}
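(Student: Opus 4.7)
The plan is to carry out all three expansions by direct Taylor computation in the coordinates of \eqref{s2-eI}, exploiting the splitting
\[
h(z)=\frac{1}{\pi}\bigl(\Lambda+M(z)\bigr),\qquad \Lambda=\mathrm{diag}(\lambda_1,\ldots,\lambda_n),\quad M_{j,k}(z)=\frac{\pr^2\phi_1}{\pr z_k\pr\ol z_j}(z),
\]
of the matrix $h=\bigl(h_{j,k}\bigr)$ from \eqref{s1-e8}, together with the strong vanishing of $\phi_1$ in \eqref{s2-eI}. That vanishing gives $M(z)=O(\abs{z}^2)$, $\pr h=O(\abs{z})$, and the Neumann series
\[
h^{-1}(z)=\pi\Lambda^{-1}-\pi\Lambda^{-1}M(z)\Lambda^{-1}+O(\abs{z}^4).
\]

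For \eqref{s2-eIV} I would substitute the Neumann expansion into $\triangle_\omega=-2\sum h^{j,k}\pr^2/(\pr z_j\pr\ol z_k)$: the leading $\pi\Lambda^{-1}$ term produces the diagonal Laplacian, the $-\pi\Lambda^{-1}M\Lambda^{-1}$ correction produces the second sum (since $\pr^2\phi_1/(\pr z_k\pr\ol z_j)=\pr^2\phi_1/(\pr\ol z_j\pr z_k)$), and the cubic Neumann remainder yields a second-order operator with $O(\abs{z}^4)\subset O(\abs{z}^3)$ coefficients. For \eqref{s2-eIII} I would use $\theta=h^{-1}\pr h$. Because $\pr h=O(\abs{z})$ and $h^{-1}-\pi\Lambda^{-1}=O(\abs{z}^2)$, only the leading piece of $h^{-1}$ contributes modulo $O(\abs{z}^3)$, giving
\[
\theta_{j,k}=\frac{1}{\lambda_j}\sum_{t=1}^n\frac{\pr^3\phi_1}{\pr z_t\pr z_k\pr\ol z_j}\,dz_t+O(\abs{z}^3).
\]
Applying $\ddbar$ and observing that the fourth mixed derivatives of the quadratic part $\sum\lambda_s\abs{z_s}^2$ vanish (so $\phi_1$ may be replaced by $\phi$ in the fourth-derivative coefficient) yields \eqref{s2-eIII}.

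For \eqref{s2-eV} I would write $\log V_\omega=-n\log\pi+\log\det(\Lambda+M')$ with $M'_{j,k}=\pr^2\phi_1/(\pr z_j\pr\ol z_k)$ and apply the identity $\log\det=\mathrm{tr}\log$. Expanding
\[
\mathrm{tr}\log(I+\Lambda^{-1}M')=\mathrm{tr}(\Lambda^{-1}M')-\tfrac12\,\mathrm{tr}\bigl((\Lambda^{-1}M')^2\bigr)+O(\abs{z}^6)
\]
and using reality of $\phi$, which gives $\ol{M'_{j,k}}=M'_{k,j}$ and hence $\mathrm{tr}\bigl((\Lambda^{-1}M')^2\bigr)=\sum_{j,k}\abs{M'_{j,k}}^2/(\lambda_j\lambda_k)$, I would recombine the linear trace with its diagonal quadratic correction via the inverse scalar expansion $\log(\lambda_s+M'_{s,s})=\log\lambda_s+M'_{s,s}/\lambda_s-\tfrac12(M'_{s,s}/\lambda_s)^2+O(\abs{z}^6)$ to recover $\sum_s\log(\pr^2\phi/(\pr\ol z_s\pr z_s))$. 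The remaining off-diagonal part of the quadratic trace symmetrizes to $-\sum_{s<t}(\pr^2\phi/(\pr z_s\pr\ol z_t))(\pr^2\phi/(\pr\ol z_s\pr z_t))/(\lambda_s\lambda_t)$, which is exactly \eqref{s2-eV} up to an additive constant $-n\log\pi$ irrelevant for the later computation of $r$ and $\hat r$.

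The computations are routine, with no analytical obstacle; the main task is bookkeeping of error orders. The special vanishing of $\phi_1$ from \eqref{s2-eI} is used twice in a crucial way: first to ensure that only the leading $\pi\Lambda^{-1}$ part of $h^{-1}$ contributes to $\theta$ modulo $O(\abs{z}^3)$, so that $\mathcal{R}_{j,k}$ has the clean fourth-derivative form of \eqref{s2-eIII}; and second to guarantee that the higher Neumann remainder in $\triangle_\omega$ is suppressed by at least three orders in $\abs{z}$. Everything else is standard matrix Taylor expansion.
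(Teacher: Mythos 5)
Your computation is correct and is precisely the ``straightforward but elementary'' direct expansion that the paper omits: the splitting $h=\frac{1}{\pi}(\Lambda+M)$ with $M=O(\abs{z}^2)$ forced by \eqref{s2-eI}, the two-term Neumann inverse, the product rule for $\ddbar(h^{-1}\pr h)$, and $\log\det=\mathrm{tr}\log$ reproduce \eqref{s2-eIII}, \eqref{s2-eIV} and \eqref{s2-eV} with the stated (in fact slightly better) error orders. Your side observation that \eqref{s2-eV} as printed omits the additive constant $-n\log\pi$ coming from $\omega_{j,k}=\frac{1}{\pi}\frac{\pr^2\phi}{\pr z_j\pr\ol z_k}$ is also correct and, as you say, harmless, since only derivatives of $\log V_\omega$ enter $r$ and the subsequent formulas.
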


Put 
\begin{equation} \label{s2-eV-I}
\triangle_0=\sum^n_{j=1}\frac{1}{\lambda_j}\frac{\pr^2}{\pr\ol z_j\pr z_j}.
\end{equation} 
The following theorem also follows from some straightforward computations. We only sketch the proof 

\begin{thm} \label{s2-tI}
We have 
\begin{align} 
&\abs{R^{TX}_\omega}^2_\omega(0)=\pi^2\sum_{1\leq s,t,j,k\leq n}\frac{1}{\lambda_t\lambda_s\lambda_j\lambda_k}
\abs{\frac{\pr^4\phi}{\pr\ol z_j\pr z_s\pr\ol z_t\pr z_k}(0)}^2.\label{s2-eVI}\\
&{\rm Ric\,}_\omega(0)=-\sum^n_{s,t,j=1}\frac{1}{\lambda_j}\frac{\pr^4\phi}{\pr\ol z_j\pr z_j\pr\ol z_s\pr z_t}(0)d\ol z_s\wedge dz_t.\label{s2-eVII}\\
&R^{\det}_\Theta(0)=-\sum^n_{s,t=1}\frac{\pr^2\log V_\Theta}{\pr\ol z_s\pr z_t}(0)d\ol z_s\wedge dz_t.\label{s2-eVIII}\\  
&r(z)=-2\pi\sum^n_{s,t=1}\frac{1}{\lambda_s\lambda_t}\frac{\pr^4\phi}{\pr\ol z_s\pr z_s\pr\ol z_t\pr z_t}(z)+O(\abs{z}^2)=-2\pi(\triangle_0^2\phi)(z)+O(\abs{z}^2).\label{s2-eIX}\\
&\hat r(z)=-2\pi\sum^n_{j=1}\frac{1}{\lambda_j}\Bigr(-\abs{\frac{\pr V_\Theta}{\pr z_j}}^2+\frac{\pr^2V_\Theta}{\pr\ol z_j\pr z_j}\Bigr)(z)+O(\abs{z}^2)=-2\pi(\triangle_0\log V_\Theta)(z)+O(\abs{z}^2).\label{s2-eX}\\
&(\triangle_\omega r)(0)=4\pi^2(\triangle^3_0\phi)(0)-8\abs{{\rm Ric\,}_\omega}^2_\omega(0)-4\abs{R^{TX}_\omega}^2_\omega(0).\label{s2-eXI}\\
&(\triangle_\omega\hat r)(0)=4\pi^2(\triangle^2_0\log V_\Theta)(0)-4\langle\,R^{\det}_\Theta\,,{\rm Ric\,}_\omega\,\rangle_\omega(0).\label{s2-eXII} 
\end{align} 
Let $f\in C^\infty(X)$. Then, 
\begin{align} 
&(D^{1,0}\pr f)(0)=\sum^n_{j,k=1}\frac{\pr^2f}{\pr z_j\pr z_k}(0)dz_k\otimes dz_j.\label{s2-ecomp0.1}\\
&(D^{0,1}\ddbar f)(0)=\sum^n_{j,k=1}\frac{\pr^2f}{\pr\ol z_j\pr\ol z_k}(0)d\ol z_k\otimes d\ol z_j.\label{s2-ecomp0.2}\\
&(\triangle_\omega f)(0)=-2\pi(\triangle_0 f)(0).\label{s2-eXII-I}\\ 
&(\pr\triangle_\omega f)(0)=-2\pi(\pr\triangle_0f)(0).\label{s2-ecomp0.3}\\
&(\ddbar\triangle_\omega f)(0)=-2\pi(\ddbar\triangle_0f)(0).\label{s2-ecomp0.4}\\
&(\triangle_\omega^2f)(0)=4\pi^2(\triangle^2_0f)(0)+4\langle\,\ddbar\pr f\,,{\rm Ric\,}_\omega\,\rangle_\omega(0). 
\label{s2-eXII-bis} 
\end{align} 
\end{thm}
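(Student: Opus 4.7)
The plan is to work throughout in the coordinates \eqref{s2-eI} and exploit two structural facts: (i) the $\omega$-metric matrix $h$ is diagonal with entries $\lambda_k/\pi$ at the origin, so $e_j=\sqrt{\pi/\lambda_j}\,\pr_{z_j}$ is an $\omega$-orthonormal frame of $T^{1,0}X$ at $0$ and $\langle dz_t,dz_{t'}\rangle_\omega(0)=(\pi/\lambda_t)\delta_{tt'}$; and (ii) all Chern connection matrices vanish at $z=0$, since the entries of $\pr h$ at the origin are third derivatives of $\phi_1$ of type $\abs{\alpha}=2$, $\abs{\beta}=1$, which vanish by the jet condition in \eqref{s2-eI}. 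From (ii), $D^{1,0}$ and $D^{0,1}$ reduce at the origin to the component-wise operators $\pr$ and $\ddbar$, yielding \eqref{s2-ecomp0.1} and \eqref{s2-ecomp0.2} directly.

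For \eqref{s2-eVI} and \eqref{s2-eVII}, I would substitute \eqref{s2-eIII} into the definitions \eqref{s1-e14}, \eqref{s1-e15} and contract with the orthonormal frame; the resulting sums reproduce the stated patterns in fourth derivatives of $\phi$. Identity \eqref{s2-eVIII} is the direct coordinate form of $R^{\det}_\Theta=-\ddbar\pr\log V_\Theta$. For \eqref{s2-eIX} and \eqref{s2-eX} I apply the expansion \eqref{s2-eIV} of $\triangle_\omega$ to the Taylor expansion \eqref{s2-eV} of $\log V_\omega$ (respectively to $\log V_\Theta$); the off-diagonal cross terms in \eqref{s2-eV} are $O(\abs{z}^4)$ and so are absorbed into the $O(\abs{z}^2)$ remainder, leaving the principal parts $-2\pi\triangle_0^2\phi(z)$ and $-2\pi(\triangle_0\log V_\Theta)(z)$.

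For \eqref{s2-eXII-I}, \eqref{s2-ecomp0.3} and \eqref{s2-ecomp0.4}, I write $\triangle_\omega=-2\pi\triangle_0+P$ as in \eqref{s2-eIV}, where $P$ has smooth coefficients vanishing to order two at $0$; since these coefficients and their first derivatives vanish at $0$, the three identities follow by Leibniz. The iterated identity \eqref{s2-eXII-bis} comes from $\triangle_\omega^2=4\pi^2\triangle_0^2-2\pi(\triangle_0 P+P\triangle_0)+P^2$ evaluated at $0$: only the pure coefficient-derivative term survives, and substituting the explicit form of $P$ from \eqref{s2-eIV} identifies this surviving sum with $4\langle\ddbar\pr f,{\rm Ric}_\omega\rangle_\omega(0)$ via \eqref{s2-eVII} together with $\langle d\ol z_s\wedge dz_t,d\ol z_{s'}\wedge dz_{t'}\rangle_\omega(0)=\pi^2(\lambda_s\lambda_t)^{-1}\delta_{ss'}\delta_{tt'}$. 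The same strategy applied with $f$ replaced by $r$ or $\hat r$ produces \eqref{s2-eXI} and \eqref{s2-eXII}, but now one must additionally expand $r$ and $\hat r$ to second order in $z$ beyond the principal terms of \eqref{s2-eIX}--\eqref{s2-eX}. The main obstacle is precisely this last step: correctly identifying the additional mixed fourth-derivative sums that arise as exactly the invariants $\abs{R^{TX}_\omega}^2_\omega$, $\abs{{\rm Ric}_\omega}^2_\omega$, and $\langle{\rm Ric}_\omega,R^{\det}_\Theta\rangle_\omega$ already computed in \eqref{s2-eVI}--\eqref{s2-eVIII}.
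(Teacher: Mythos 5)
Your proposal is correct and follows essentially the same route as the paper: an $\omega$-orthonormal frame $\sqrt{\pi/\lambda_j}\,\pr_{z_j}+O(\abs{z}^2)$ with vanishing connection matrices at $0$, substitution of \eqref{s2-eIII} into the curvature definitions, and the decomposition $\triangle_\omega=-2\pi\triangle_0+P$ with $P$ vanishing to second order, so that only the term where $\triangle_0$ falls entirely on the coefficients of $P$ survives in $\triangle_\omega^2$ at the origin. Your treatment of \eqref{s2-eXI}--\eqref{s2-eXII} by expanding $r,\hat r$ to second order is the same computation the paper organizes as $\triangle_\omega r=\triangle_\omega^2\log V_\omega$ and $\triangle_\omega\hat r=\triangle_\omega^2\log V_\Theta$ combined with the Taylor expansion \eqref{s2-eV}, and the extra fourth-derivative sums are indeed the invariants you name (cf.\ \eqref{s2-eXVII}, \eqref{s2-eXIX}--\eqref{s2-eXXII}).
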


\begin{proof}
Let $e_j=e_j(z)$, $j=1,\ldots,n$, be an orthonormal frame for $T^{1,0}_zX$ such that 
\begin{equation} \label{s2-eXIII}
e_j(z)=\sqrt{\frac{\pi}{\lambda_j}}\frac{\pr}{\pr z_j}+O(\abs{z}^2),\ \ j=1,\ldots,n.
\end{equation} 
(It is easy to see that this is always possible.)
From \eqref{s2-eIII}, it is not difficult to see that 
\[R^{TX}_\omega(\ol e_s,e_t)e_k=\frac{\pi\sqrt{\pi}}{\sqrt{\lambda_s\lambda_t\lambda_k}}\sum^n_{j=1}\frac{1}{\lambda_j}\frac{\pr^4\phi}{\pr\ol z_s\pr z_t\pr\ol z_j\pr z_k}(z)\frac{\pr}{\pr z_j}+O(\abs{z}^2),\ \ \forall\ \ 1\leq s,t,k\leq n.\]
Thus, 
\begin{equation} \label{s2-eXIV}
\langle\,R^{TX}_\omega(\ol e_s,e_t)e_k\,,e_j\,\rangle_\omega=\frac{\pi}{\sqrt{\lambda_s\lambda_t\lambda_k\lambda_j}}
\frac{\pr^4\phi}{\pr\ol z_j\pr z_k\pr\ol z_s\pr z_t}(z)+O(\abs{z}^2).
\end{equation} 
From \eqref{s2-eXIV} and \eqref{s1-e14}, \eqref{s2-eVI} follows. 

Similarly, from \eqref{s2-eIII}, we can check that 
\begin{equation} \label{s2-eXV}
\begin{split}
\langle\,{\rm Ric\,}_\omega\,,\frac{\pr}{\pr\ol z_s}\wedge\frac{\pr}{\pr z_t}\,\rangle&=
-\sum^n_{j=1}\langle\,R^{TX}_\omega(\frac{\pr}{\pr\ol z_s},e_j)\frac{\pr}{\pr z_t}\,,e_j\,\rangle_\omega\\
&=-\sum^n_{j=1}\frac{1}{\lambda_j}\frac{\pr^4\phi}{\pr\ol z_j\pr z_j\pr\ol z_s\pr z_t}(z)+O(\abs{z}^2).
\end{split}
\end{equation} 
From \eqref{s2-eXV}, \eqref{s2-eVII} follows. 

From \eqref{s2-eIV}, \eqref{s2-eV} and the definitions of $R^{\det}_\Theta$, $r$, $\hat r$ (see \eqref{s1-e12}, \eqref{s1-e11}), 
we can easily get \eqref{s2-eVIII}, \eqref{s2-eIX}, \eqref{s2-eX}. 

Now, we prove \eqref{s2-eXII}. From \eqref{s2-eIV}, we can check that 
\begin{equation} \label{s2-eXVI}
\begin{split}
(\triangle_\omega\hat r)(0)&=(\triangle^2_\omega\log V_\Theta)(0)\\
&=\Bigr(\triangle_\omega\circ\Bigr(-2\pi(\triangle_0\log V_\Theta)
+2\pi\sum^n_{j,k=1}\frac{1}{\lambda_j\lambda_k}\frac{\pr^2\phi_1}{\pr\ol z_j\pr z_k}\frac{\pr^2\log V_\Theta}{\pr z_j\pr\ol z_k}\Bigr)\Bigr)(0)\\
&=\Bigr(-2\pi\triangle_0\circ\Bigr(-2\pi(\triangle_0\log V_\Theta)
+2\pi\sum^n_{j,k=1}\frac{1}{\lambda_j\lambda_k}\frac{\pr^2\phi_1}{\pr\ol z_j\pr z_k}\frac{\pr^2\log V_\Theta}{\pr z_j\pr\ol z_k}\Bigr)\Bigr)(0)\\
&=4\pi^2(\triangle^2_0\log V_\Theta)(0)-4\pi^2\sum^n_{j,k,s=1}\frac{1}{\lambda_j\lambda_k\lambda_s}\frac{\pr^4\phi}{\pr\ol z_j\pr z_k\pr\ol z_s\pr z_s}(0)\frac{\pr^2\log V_\Theta}{\pr z_j\ol z_k}(0).
\end{split}
\end{equation} 
In view of \eqref{s2-eVIII} and \eqref{s2-eVII}, we see that 
\begin{equation} \label{s2-eXVII}
\langle\,{\rm Ric\,}_\omega\,,R^{\det}_\Theta\,\rangle_\omega(0)=\sum^n_{j,k,s=1}\frac{\pi^2}{\lambda_j\lambda_k\lambda_s}\frac{\pr^4\phi}{\pr\ol z_j\pr z_j\pr\ol z_k\pr z_s}(0)\frac{\pr^2\log V_\Theta}{\pr z_k\pr\ol z_s}(0).
\end{equation}
From this observation and \eqref{s2-eXVI}, \eqref{s2-eXII} follows. 

Now, we prove \eqref{s2-eXI}. We can repeat the procedure as \eqref{s2-eXVI} and conclude that 
\begin{equation} \label{s2-eXVIII}
\begin{split}
(\triangle_\omega r)(0)&=(\triangle^2_\omega\log V_\omega)(0)\\
&=4\pi^2(\triangle^2_0\log V_\omega)(0)
-4\pi^2\sum^n_{j,k,s=1}\frac{1}{\lambda_j\lambda_k\lambda_s}\frac{\pr^4\phi}{\pr\ol z_j\pr z_k\pr\ol z_s\pr z_s}(0)\frac{\pr^2\log V_\omega}{\pr z_j\ol z_k}(0).
\end{split}
\end{equation} 
As \eqref{s2-eXVII}, we have 
\[\langle\,{\rm Ric\,}_\omega\,,-\ddbar\pr\log V_\omega\,\rangle_\omega(0)=\sum^n_{j,k,s=1}\frac{\pi^2}{\lambda_j\lambda_k\lambda_s}\frac{\pr^4\phi}{\pr\ol z_j\pr z_k\pr\ol z_s\pr z_s}(0)\frac{\pr^2\log V_\omega}{\pr z_j\pr\ol z_k}(0).\] 
Note that $-\ddbar\pr\log V_\omega={\rm Ric\,}_\omega$. Thus, 
\begin{equation} \label{s2-eXIX} 
\abs{{\rm Ric\,}_\omega}^2_\omega(0)=\sum^n_{j,k,s=1}\frac{\pi^2}{\lambda_j\lambda_k\lambda_s}\frac{\pr^4\phi}{\pr\ol z_j\pr z_k\pr\ol z_s\pr z_s}(0)\frac{\pr^2\log V_\omega}{\pr z_j\pr\ol z_k}(0).
\end{equation} 
From \eqref{s2-eV}, it is straightforward to see that 
\begin{equation} \label{s2-eXX}
\begin{split}
(\triangle^2_0\log V_\omega)(0)&=(\triangle^3_0\phi)(0)-\sum^n_{s,t,j,k=1}\frac{1}{\lambda_s\lambda_t\lambda_j\lambda_k}\frac{\pr^4\phi}{\pr z_s\pr\ol z_t\pr\ol z_k\pr z_k}(0)
\frac{\pr^4\phi}{\pr\ol z_s\pr z_t\pr\ol z_j\pr z_j}(0)\\
&\quad-\sum^n_{s,t,j,k=1}\frac{1}{\lambda_s\lambda_t\lambda_j\lambda_k}\abs{\frac{\pr^4\phi}{\pr z_s\pr\ol z_t\pr\ol z_k\pr z_j}(0)}^2.
\end{split}
\end{equation} 
From \eqref{s2-eVII} and \eqref{s2-eVI}, we see that 
\begin{equation} \label{s2-eXXI} 
\sum^n_{s,t,j,k=1}\frac{1}{\lambda_s\lambda_t\lambda_j\lambda_k}\frac{\pr^4\phi}{\pr z_s\pr\ol z_t\pr\ol z_k\pr z_k}(0)
\frac{\pr^4\phi}{\pr\ol z_s\pr z_t\pr\ol z_j\pr z_j}(0)=\frac{1}{\pi^2}\abs{{\rm Ric\,}_\omega}^2_\omega(0)
\end{equation} 
and 
\begin{equation} \label{s2-eXXII}
\sum^n_{s,t,j,k=1}\frac{1}{\lambda_s\lambda_t\lambda_j\lambda_k}\abs{\frac{\pr^4\phi}{\pr z_s\pr\ol z_t\pr\ol z_k\pr z_j}(0)}^2=\frac{1}{\pi^2}\abs{R^{TX}_\omega}^2_\omega(0).
\end{equation} 
From \eqref{s2-eXXI}, \eqref{s2-eXXII} and \eqref{s2-eXX}, we obtain 
\begin{equation} \label{s2-eXXIII}
(\triangle^2_0\log V_\omega)(0)=(\triangle^3_0\phi)(0)-\frac{1}{\pi^2}\abs{{\rm Ric\,}_\omega}^2_\omega(0)-\frac{1}{\pi^2}\abs{R^{TX}_\omega}^2_\omega(0).
\end{equation} 
Combining \eqref{s2-eXXIII} with \eqref{s2-eXIX} and \eqref{s2-eXVIII}, \eqref{s2-eXI} follows. 

In view of \eqref{s2-eI}, \eqref{s2-eIV}, we see that 
\[\triangle_\omega f=-2\pi\triangle_0f+O(\abs{z}^2).\] 
From this, \eqref{s2-eXII-I}, \eqref{s2-ecomp0.3} and \eqref{s2-ecomp0.4} follows. 

The proof of \eqref{s2-eXII-bis} is essentially the same as the proof of \eqref{s2-eXII}. 

From \eqref{s2-eII}, it is not difficult to see that $\alpha_{j,k}=O(\abs{z})$, $j,k=1,\ldots,n$, where $\alpha_{j,k}$, 
$j,k=1,\ldots,n$, are as in \eqref{s1-e15-Ibis}. From this observation, \eqref{s2-ecomp0.1} and \eqref{s2-ecomp0.2} follows.
\end{proof}

From Lemma~\ref{s2-lI} and Theorem~\ref{s2-tI}, we deduce 

\begin{cor} \label{s2-cI}
With the notations used before, we have 
\begin{equation} \label{s2-eVII-I}
\abs{{\rm Ric\,}_\omega}^2_\omega(0)=\pi^2\sum_{1\leq s,t,j,k\leq n}\frac{1}{\lambda_t\lambda_s\lambda_j\lambda_k}
\frac{\pr^4\phi}{\pr\ol z_s\pr z_s\pr\ol z_t\pr z_k}(0)\frac{\pr^4\phi}{\pr\ol z_j\pr z_j\pr z_t\pr\ol z_k}(0).
\end{equation} 
\begin{equation}\label{s2-eVIII-bis}
\abs{R^{\det}_\Theta}^2_\omega(0)=\pi^2\sum^n_{s,t=1}\frac{1}{\lambda_s\lambda_t}\abs{\frac{\pr^2\log V_\Theta}{\pr z_s\pr\ol z_t}(0)}^2.
\end{equation} 
\begin{equation}\label{s2-eVIII-bisb}
\langle\,R^{\det}_\Theta\,,{\rm Ric\,}_\omega\,\rangle_\omega(0)=
\pi^2\sum^n_{j,k,s=1}\frac{1}{\lambda_j\lambda_k\lambda_s}\frac{\pr^4\phi}{\pr\ol z_j\pr z_j\pr\ol z_k\pr z_s}(0)\frac{\pr^2\log V_\Theta}{\pr z_k\pr\ol z_s}(0).
\end{equation}
\begin{equation}\label{s2-eIX-I}
r^2(0)=4\pi^2\sum_{1\leq s,t,j,k\leq n}\frac{1}{\lambda_t\lambda_s\lambda_j\lambda_k}
\frac{\pr^4\phi}{\pr\ol z_t\pr z_t\pr\ol z_s\pr z_s}(0)\frac{\pr^4\phi}{\pr\ol z_k\pr z_k\pr\ol z_j\pr z_j}(0).
\end{equation} 
Let $f,g\in C^\infty(X)$. Then, 
\begin{equation} \label{s2-eXII-II}
\langle\,\ddbar f\,,\ddbar\hat r\,\rangle_\omega(0)=-2\pi^2\sum^n_{s=1}\frac{1}{\lambda_s}\frac{\pr f}{\pr\ol z_s}(0)\frac{\pr}{\pr z_s}(\triangle_0\log V_\Theta)(0). 
\end{equation} 
\begin{equation}\label{s2-eXII-III}
\langle\,\pr f\,,\pr\hat r\,\rangle_\omega(0)=-2\pi^2\sum^n_{s=1}\frac{1}{\lambda_s}\frac{\pr f}{\pr z_s}(0)\frac{\pr}{\pr \ol z_s}(\triangle_0\log V_\Theta)(0). 
\end{equation} 
\begin{equation}\label{s2-eXII-IIIbisb}
\langle\,\ddbar f\,,\ddbar r\,\rangle_\omega(0)=-2\pi^2\sum^n_{j,k,s=1}\frac{1}{\lambda_j\lambda_k\lambda_s}\frac{\pr^5\phi}{\pr\ol z_j\pr z_j\pr\ol z_k\pr z_k\pr z_s}(0)\frac{\pr f}{\pr\ol z_s}(0).
\end{equation}
\begin{equation} \label{s2-eXII-IIIbisbb}
\langle\,\pr f\,,\pr r\,\rangle_\omega(0)=-2\pi^2\sum^n_{j,k,s=1}\frac{1}{\lambda_j\lambda_k\lambda_s}\frac{\pr^5\phi}{\pr\ol z_j\pr z_j\pr\ol z_k\pr z_k\pr\ol z_s}(0)\frac{\pr f}{\pr z_s}(0).
\end{equation}
\begin{equation} \label{s2-eXII-IIIbis}
\langle\,\ddbar\pr f\,,R^{\det}_\Theta\,\rangle_\omega(0)=-\pi^2\sum^n_{s,t=1}\frac{1}{\lambda_s\lambda_t}\frac{\pr^2f}{\pr\ol z_s\pr z_t}(0)\frac{\pr^2\log V_\Theta}{\pr z_s\pr\ol z_t}(0).
\end{equation} 
\begin{equation}\label{s2-eXII-IV}
\langle\,\ddbar\pr f\,,{\rm Ric\,}_\omega\,\rangle_\omega(0)=-\pi^2\sum^n_{j,k,s=1}\frac{1}{\lambda_j\lambda_k\lambda_s}\frac{\pr^4\phi}{\pr\ol z_j\pr z_j\pr\ol z_k\pr z_s}(0)\frac{\pr^2f}{\pr z_k\pr\ol z_s}(0).
\end{equation}
\begin{equation}\label{s2-ecomppreb0}
\langle\,\ddbar g\wedge\pr f\,,{\rm Ric\,}_\omega\,\rangle_\omega=-\pi^2\sum^n_{j,k,s=1}\frac{1}{\lambda_j\lambda_k\lambda_s}\frac{\pr^4\phi}{\pr\ol z_j\pr z_k\pr z_s\pr\ol z_s}(0)\frac{\pr f}{\pr z_j}(0)\frac{\pr g}{\pr\ol z_k}(0).
\end{equation} 
\begin{equation}\label{s2-ecomppre0}
\langle\,\ddbar\pr f\,,\ddbar\pr\ol g\,\rangle_\omega(0)=\pi^2\sum^n_{j,k=1}\frac{1}{\lambda_j\lambda_k}\frac{\pr^2f}{\pr\ol z_j\pr z_k}(0)\frac{\pr^2g}{\pr z_j\pr\ol z_k}(0).
\end{equation} 
\begin{equation}\label{s2-ecomppreI}
\langle\,D^{1,0}\pr f\,,D^{1,0}\pr\ol g\,\rangle_\omega(0)=\pi^2\sum^n_{j,k=1}\frac{1}{\lambda_j\lambda_k}\frac{\pr^2f}{\pr z_j\pr z_k}(0)
\frac{\pr^2g}{\pr\ol z_j\pr\ol z_k}(0).
\end{equation} 
\begin{equation}\label{s2-ecomppreII}
\langle\,D^{0,1}\ddbar f\,,D^{0,1}\ddbar\ol g\,\rangle_\omega(0)=\pi^2\sum^n_{j,k=1}\frac{1}{\lambda_j\lambda_k}\frac{\pr^2f}{\pr\ol z_j\pr\ol z_k}(0)
\frac{\pr^2g}{\pr z_j\pr z_k}(0).
\end{equation}
\begin{equation}\label{s2-ecomppreIII}
\langle\,\pr\triangle_\omega f\,,\pr\ol g\,\rangle_\omega(0)=-2\pi^2\sum^n_{j,k=1}\frac{1}{\lambda_j\lambda_k}\frac{\pr^3f}{\pr z_j\pr\ol z_j\pr z_k}(0)
\frac{\pr g}{\pr\ol z_k}(0).
\end{equation}
\begin{equation}\label{s2-ecomppreIV}
\langle\,\ddbar\triangle_\omega f\,,\ddbar\ol g\,\rangle_\omega(0)=-2\pi^2\sum^n_{j,k=1}\frac{1}{\lambda_j\lambda_k}\frac{\pr^3f}{\pr z_j\pr\ol z_j\pr \ol z_k}(0)\frac{\pr g}{\pr z_k}(0).
\end{equation}
\end{cor}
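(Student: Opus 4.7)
The plan is to derive each identity by direct computation at the base point $z=0$, using the explicit local expressions collected in Lemma~\ref{s2-lI} and Theorem~\ref{s2-tI} together with the fact that the Kähler-normal coordinates of \eqref{s2-eI} diagonalize $\omega$ at the origin.

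First, I would record the pointwise metric data. From \eqref{s2-eII}, combined with the vanishing of all first derivatives of $\phi_1$ at $0$, one reads off $\omega_{j,k}(0)=\frac{\lambda_j}{\pi}\delta_{j,k}$, hence $h^{j,k}(0)=\langle\,dz_j,dz_k\,\rangle_\omega(0)=\frac{\pi}{\lambda_j}\delta_{j,k}$ and likewise $\langle\,d\ol z_j,d\ol z_k\,\rangle_\omega(0)=\frac{\pi}{\lambda_j}\delta_{j,k}$. This immediately gives a template: for any $(1,1)$-form $\eta=\sum_{s,t}\eta_{s,t}\,d\ol z_s\wedge dz_t$,
\[
\abs{\eta}^2_\omega(0)=\pi^2\sum_{s,t}\frac{1}{\lambda_s\lambda_t}\abs{\eta_{s,t}}^2,
\]
and analogous templates apply to pairings of two such forms, to $(1,0)$- and $(0,1)$-forms, and to their tensor products.

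The identities \eqref{s2-eVII-I}, \eqref{s2-eVIII-bis}, \eqref{s2-eVIII-bisb}, \eqref{s2-eIX-I} then follow by substituting the explicit coefficients of ${\rm Ric}_\omega(0)$, $R^{\det}_\Theta(0)$, and $r(0)$ from \eqref{s2-eVII}, \eqref{s2-eVIII}, \eqref{s2-eIX} into this template. The identities \eqref{s2-eXII-II}, \eqref{s2-eXII-III}, \eqref{s2-eXII-IIIbisb}, \eqref{s2-eXII-IIIbisbb} are obtained by differentiating the expressions \eqref{s2-eIX} and \eqref{s2-eX} for $r$ and $\hat r$ and pairing against $\ddbar f$ or $\pr f$ using the same inner-product data; the identities \eqref{s2-eXII-IIIbis}, \eqref{s2-eXII-IV} and \eqref{s2-ecomppreb0} involve the additional step of reading off the $(s,t)$-component of $\ddbar\pr f$ or of $\ddbar g\wedge\pr f$ at $0$, which is immediate. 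Finally, \eqref{s2-ecomppre0}--\eqref{s2-ecomppreIV} follow by plugging the expressions \eqref{s2-ecomp0.1}--\eqref{s2-ecomp0.4} into the relevant templates, noting for \eqref{s2-ecomppreIII}, \eqref{s2-ecomppreIV} that \eqref{s2-ecomp0.3}, \eqref{s2-ecomp0.4} replace $\pr\triangle_\omega f$ and $\ddbar\triangle_\omega f$ by $-2\pi\,\pr\triangle_0 f$ and $-2\pi\,\ddbar\triangle_0 f$ at $0$.

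I do not expect a genuine conceptual obstacle: the work is purely notational bookkeeping of the factors of $\pi$ and $1/\lambda_j$, and of the precise placement of indices (in particular for ${\rm Ric}_\omega$, whose coefficients are indexed as $d\ol z_s\wedge dz_t$, and for the covariant derivatives $D^{1,0}\pr f$, $D^{0,1}\ddbar f$, which at $0$ reduce to ordinary partials by \eqref{s2-ecomp0.1}, \eqref{s2-ecomp0.2}). To minimize sign and index errors, I would group the identities by the common "shape" of the objects being paired---pairings of $(1,1)$-forms, pairings of symmetric $(2,0)$- and $(0,2)$-tensors, and pairings involving $\ddbar$ of a function with $\ddbar$ of a scalar curvature---and read each off from a single template formula.
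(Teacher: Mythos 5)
Your proposal is correct and is exactly the route the paper intends: the paper gives no separate argument for Corollary~\ref{s2-cI} beyond the phrase ``From Lemma~\ref{s2-lI} and Theorem~\ref{s2-tI}, we deduce,'' and your plan---read off $\langle\,dz_j\,,dz_k\,\rangle_\omega(0)=\frac{\pi}{\lambda_j}\delta_{j,k}$ from \eqref{s2-eII}, form the resulting pairing templates, and substitute the explicit components from \eqref{s2-eVII}--\eqref{s2-eX} and \eqref{s2-ecomp0.1}--\eqref{s2-ecomp0.4}---is precisely that deduction, with the only implicit extra step being the use of the reality of $\phi$, $\log V_\Theta$, $r$, $\hat r$ to rewrite the conjugated second slot of the Hermitian pairing.
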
 

Formula \eqref{s2-ecompII} below appears in~\cite[(5.80)]{MM10}. For the convenience of the reader we give the details here.

\begin{prop} \label{s2-p1}
Let $f, g\in C^\infty(X)$. We have
\begin{equation} \label{s2-ecompI}
\triangle_\omega(fg)=(\triangle_\omega f)g+(\triangle_\omega g)f-2\langle\,\pr f\,,\pr\ol g\,\rangle_\omega-2\langle\,\ddbar f\,,\ddbar\ol g\,\rangle_\omega
\end{equation} 
and 
\begin{equation} \label{s2-ecompII}
\begin{split}
\triangle^2_\omega(fg)&=(\triangle^2_\omega f)g+(\triangle^2_\omega g)f-4\langle\,\pr\triangle_\omega f\,,\pr\ol g\,\rangle_\omega-4\langle\,\ddbar\triangle_\omega f\,,\ddbar\ol g\,\rangle_\omega
-4\langle\,\pr\triangle_\omega g\,,\pr\ol f\,\rangle_\omega\\
&\quad-4\langle\,\ddbar\triangle_\omega g\,,\ddbar\,\ol f\,\rangle_\omega
+2(\triangle_\omega f)(\triangle_\omega g)
+8\langle\,\ddbar\pr f\,,\ddbar\pr\ol g\,\rangle_\omega+4\langle\,D^{0,1}\ddbar f\,,D^{0,1}\ddbar\ol g\,\rangle_\omega\\
&\quad
+4\langle\,D^{1,0}\pr f\,,D^{1,0}\pr\ol g\,\rangle_\omega+4\langle\,\ddbar f\wedge\pr g\,,{\rm Ric\,}_\omega\,\rangle_\omega
+4\langle\,\ddbar g\wedge\pr f\,,{\rm Ric\,}_\omega\,\rangle_\omega.
\end{split}
\end{equation}
\end{prop}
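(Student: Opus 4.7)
The plan is to derive \eqref{s2-ecompI} by a direct Leibniz calculation in local coordinates and then to deduce \eqref{s2-ecompII} by applying $\triangle_\omega$ to \eqref{s2-ecompI} and iterating, with the remaining ``cross'' terms handled via two Bochner--Weitzenb\"ock identities that I verify pointwise in the Ruan-type normal coordinates \eqref{s2-eI}.

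For \eqref{s2-ecompI}, using the local expression \eqref{s1-e9} I expand
\[\frac{\pr^2(fg)}{\pr z_j\pr\ol z_k}=\left(\frac{\pr^2f}{\pr z_j\pr\ol z_k}\right)g+f\left(\frac{\pr^2g}{\pr z_j\pr\ol z_k}\right)+\frac{\pr f}{\pr z_j}\frac{\pr g}{\pr\ol z_k}+\frac{\pr f}{\pr\ol z_k}\frac{\pr g}{\pr z_j},\]
contract with $-2h^{j,k}$, and identify the two cross sums with $-2\langle\,\pr f,\pr\ol g\,\rangle_\omega$ and $-2\langle\,\ddbar f,\ddbar\ol g\,\rangle_\omega$ using $\langle\,dz_j,dz_k\,\rangle_\omega=h^{j,k}$ together with the Hermitian symmetry $\langle\,d\ol z_j,d\ol z_k\,\rangle_\omega=\ol{h^{j,k}}=h^{k,j}$.

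For \eqref{s2-ecompII}, I apply $\triangle_\omega$ to \eqref{s2-ecompI} and then re-apply \eqref{s2-ecompI} to each of the products $(\triangle_\omega f)g$ and $f(\triangle_\omega g)$. This produces $(\triangle^2_\omega f)g+f\triangle^2_\omega g+2(\triangle_\omega f)(\triangle_\omega g)$ plus several pairings, among them $-2\triangle_\omega\langle\,\pr f,\pr\ol g\,\rangle_\omega$ and $-2\triangle_\omega\langle\,\ddbar f,\ddbar\ol g\,\rangle_\omega$. A short coordinate check gives the conjugate-swap identities
\[\langle\,\pr f,\pr\triangle_\omega\ol g\,\rangle_\omega=\langle\,\ddbar\triangle_\omega g,\ddbar\ol f\,\rangle_\omega,\qquad \langle\,\ddbar f,\ddbar\triangle_\omega\ol g\,\rangle_\omega=\langle\,\pr\triangle_\omega g,\pr\ol f\,\rangle_\omega,\]
after which the comparison with the target right-hand side of \eqref{s2-ecompII} reduces to establishing the two Bochner--Weitzenb\"ock identities
\begin{align*}
\triangle_\omega\langle\,\pr f,\pr\ol g\,\rangle_\omega &=\langle\,\pr\triangle_\omega f,\pr\ol g\,\rangle_\omega+\langle\,\ddbar\triangle_\omega g,\ddbar\ol f\,\rangle_\omega-2\langle\,\ddbar\pr f,\ddbar\pr\ol g\,\rangle_\omega\\
&\quad-2\langle\,D^{1,0}\pr f,D^{1,0}\pr\ol g\,\rangle_\omega-2\langle\,\ddbar g\wedge\pr f,{\rm Ric\,}_\omega\,\rangle_\omega,\\
\triangle_\omega\langle\,\ddbar f,\ddbar\ol g\,\rangle_\omega &=\langle\,\ddbar\triangle_\omega f,\ddbar\ol g\,\rangle_\omega+\langle\,\pr\triangle_\omega g,\pr\ol f\,\rangle_\omega-2\langle\,\ddbar\pr f,\ddbar\pr\ol g\,\rangle_\omega\\
&\quad-2\langle\,D^{0,1}\ddbar f,D^{0,1}\ddbar\ol g\,\rangle_\omega-2\langle\,\ddbar f\wedge\pr g,{\rm Ric\,}_\omega\,\rangle_\omega.
\end{align*}

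Both identities are tensorial, so it suffices to verify them at an arbitrary point $p\in X$ identified with $z=0$ in the normal coordinates \eqref{s2-eI}. Using the Taylor expansions \eqref{s2-eII}--\eqref{s2-eV} of $\omega$, $\triangle_\omega$ and $\log V_\omega$ together with the pointwise formulas of Theorem~\ref{s2-tI} and Corollary~\ref{s2-cI} for ${\rm Ric\,}_\omega$, $D^{1,0}\pr f$ and $D^{0,1}\ddbar f$, each side at $z=0$ becomes an explicit finite sum in fourth partial derivatives of $\phi$, $f$ and $g$, and \eqref{s2-ecompII} follows by direct comparison. The main obstacle will be the bookkeeping in this last step: second derivatives of the metric coefficients $h^{j,k}$ at the origin generate several cross terms, which one has to repackage into the Ricci contractions $\langle\,\ddbar g\wedge\pr f,{\rm Ric\,}_\omega\,\rangle_\omega$, $\langle\,\ddbar f\wedge\pr g,{\rm Ric\,}_\omega\,\rangle_\omega$ and the covariant Hessians $\langle\,D^{1,0}\pr f,D^{1,0}\pr\ol g\,\rangle_\omega$, $\langle\,D^{0,1}\ddbar f,D^{0,1}\ddbar\ol g\,\rangle_\omega$ via the identifications supplied by Theorem~\ref{s2-tI}.
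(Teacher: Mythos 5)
Your proposal is correct, but for \eqref{s2-ecompII} it takes a genuinely different route from the paper. The paper proves both identities by evaluating at the center of the Ruan coordinates \eqref{s2-eI}: for \eqref{s2-ecompII} it invokes \eqref{s2-eXII-bis} to write $\triangle^2_\omega(fg)(0)=4\pi^2\triangle^2_0(fg)(0)+4\langle\,\ddbar\pr(fg)\,,{\rm Ric\,}_\omega\,\rangle_\omega(0)$, expands the flat bi-Laplacian $\triangle^2_0(fg)$ by brute-force Leibniz, and then repackages the resulting sums using Corollary~\ref{s2-cI}. You instead iterate the first-order Leibniz rule \eqref{s2-ecompI} and reduce everything to two Bochner-type identities for $\triangle_\omega\langle\,\pr f\,,\pr\ol g\,\rangle_\omega$ and $\triangle_\omega\langle\,\ddbar f\,,\ddbar\ol g\,\rangle_\omega$; I checked that your bookkeeping of coefficients (the $-4$'s, the $+8$, the two $+4$ Ricci terms) assembles exactly to \eqref{s2-ecompII}, and your first Bochner identity is word-for-word the paper's Lemma~\ref{s5-lIV} (formula \eqref{s5-ecompXXX}), proved there by precisely the normal-coordinate computation you sketch. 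Your second Bochner identity need not be verified separately: it follows from the first by swapping $f\leftrightarrow g$, using $\langle\,\ddbar f\,,\ddbar\ol g\,\rangle_\omega=\langle\,\pr g\,,\pr\ol f\,\rangle_\omega$ and the symmetries $\langle\,\ddbar\pr g\,,\ddbar\pr\ol f\,\rangle_\omega=\langle\,\ddbar\pr f\,,\ddbar\pr\ol g\,\rangle_\omega$, $\langle\,D^{1,0}\pr g\,,D^{1,0}\pr\ol f\,\rangle_\omega=\langle\,D^{0,1}\ddbar f\,,D^{0,1}\ddbar\ol g\,\rangle_\omega$, which is worth making explicit. What each approach buys: the paper's single reduction via \eqref{s2-eXII-bis} is shorter if one only wants \eqref{s2-ecompII}, whereas your decomposition isolates reusable invariant identities --- one of which the paper must prove anyway in Section~5 to compute $C_2(f,g)$ --- so your route would actually let the two computations share work.
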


\begin{proof}
From \eqref{s2-eII} and \eqref{s2-eIV}, we can check that 
\begin{equation} \label{s2-ecompIII}
\begin{split}
\triangle_\omega(fg)(0)&=-2\pi\Bigr((\triangle_0f)g+(\triangle_0g)f\Bigr)(0)-2\pi\sum^n_{j=1}
\frac{1}{\lambda_j}\Bigr(\frac{\pr f}{\pr z_j}(0)\frac{\pr g}{\pr\ol z_j}(0)+\frac{\pr f}{\pr\ol z_j}(0)\frac{\pr g}{\pr z_j}(0)\Bigr)\\
&=(\triangle_\omega f)(0)g(0)+(\triangle_\omega g)(0)f(0)-2\langle\,\pr f\,,\pr\ol g\,\rangle_\omega(0)-2\langle\,\ddbar f\,,\ddbar\ol g\,\rangle_\omega(0).
\end{split}
\end{equation} 
\eqref{s2-ecompI} follows. 

Now, we prove \eqref{s2-ecompII}. In view of \eqref{s2-eXII-bis}, we have 
\begin{equation} \label{s2-ecompIV}
\begin{split}
\triangle^2_\omega(fg)(0)&=4\pi^2\triangle^2_0(fg)(0)+4\langle\,\ddbar\pr(fg)\,,{\rm Ric\,}_\omega\,\rangle_\omega(0)\\
&=4\pi^2\triangle^2_0(fg)(0)+4\langle\,g\ddbar\pr f+f\ddbar\pr g+\ddbar f\wedge\pr g+\ddbar g\wedge\pr f\,,{\rm Ric\,}_\omega\,\rangle_\omega(0).
\end{split}
\end{equation} 
It is straightforward to see that 
\begin{equation} \label{s2-ecompV} 
\begin{split}
\triangle^2_0(fg)(0)&=(\triangle^2_0f)(0)g(0)+(\triangle^2_0g)(0)f(0)+2\sum^n_{j,k=1}\frac{1}{\lambda_j\lambda_k}\frac{\pr^3f}{\pr z_j\pr\ol z_j\pr z_k}(0)\frac{\pr g}{\pr\ol z_k}(0)\\
&\quad+2\sum^n_{j,k=1}\frac{1}{\lambda_j\lambda_k}\frac{\pr^3f}{\pr z_j\pr\ol z_j\pr\ol z_k}(0)\frac{\pr g}{\pr z_k}(0)
+2\sum^n_{j,k=1}\frac{1}{\lambda_j\lambda_k}\frac{\pr^3g}{\pr z_j\pr\ol z_j\pr z_k}(0)\frac{\pr f}{\pr\ol z_k}(0)\\
&\quad+2\sum^n_{j,k=1}\frac{1}{\lambda_j\lambda_k}\frac{\pr^3g}{\pr z_j\pr\ol z_j\pr\ol z_k}(0)\frac{\pr g}{\pr z_k}(0)
+2(\triangle_0f)(0)(\triangle_0g)(0)\\
&\quad+2\sum^n_{j,k=1}\frac{1}{\lambda_j\lambda_k}\frac{\pr^2f}{\pr z_k\pr\ol z_j}(0)\frac{\pr^2g}{\pr\ol z_k\pr z_j}(0)+\sum^n_{j,k=1}\frac{1}{\lambda_j\lambda_k}\frac{\pr^2f}{\pr z_k\pr z_j}(0)\frac{\pr^2g}{\pr\ol z_k\pr\ol z_j}(0)\\
&\quad+\sum^n_{j,k=1}\frac{1}{\lambda_j\lambda_k}\frac{\pr^2f}{\pr\ol z_k\pr\ol z_j}(0)\frac{\pr^2g}{\pr z_k\pr z_j}(0).
\end{split}
\end{equation}
Combining \eqref{s2-ecompV} with \eqref{s2-ecomppre0}, \eqref{s2-ecomppreI}, \eqref{s2-ecomppreII}, \eqref{s2-ecomppreIII} 
and \eqref{s2-ecomppreIV}, we obtain
\begin{equation} \label{s2-ecompVI} 
\begin{split}
\triangle^2_0(fg)(0)&=(\triangle^2_0f)(0)g(0)+(\triangle^2_0g)(0)f(0)-
\frac{1}{\pi^2}\langle\,\pr\triangle_\omega f\,,\pr\ol g\,\rangle_\omega(0)\\
&\quad-\frac{1}{\pi^2}\langle\,\ddbar\triangle_\omega f\,,\ddbar\ol g\,\rangle_\omega(0)
-\frac{1}{\pi^2}\langle\,\pr\triangle_\omega g\,,\pr\ol f\,\rangle_\omega(0)
-\frac{1}{\pi^2}\langle\,\ddbar\triangle_\omega g\,,\ddbar\,\ol f\,\rangle_\omega(0)\\
&\quad+2(\triangle_0f)(0)(\triangle_0g)(0)
+\frac{2}{\pi^2}\langle\,\ddbar\pr f\,,\ddbar\pr\ol g\,\rangle_\omega(0)
+\frac{1}{\pi^2}\langle\,D^{1,0}\pr f\,,D^{1,0}\pr\ol g\,\rangle_\omega(0)\\
&\quad+\frac{1}{\pi^2}\langle\,D^{0,1}\ddbar f\,,D^{0,1}\ddbar\ol g\,\rangle_\omega(0).
\end{split}
\end{equation} 
From \eqref{s2-ecompVI}, \eqref{s2-ecompIV} and \eqref{s2-eXII-bis}, it is straightforward to check that
\begin{equation} \label{s2-ecompVII} 
\begin{split}
\triangle^2_\omega(fg)(0)&=(\triangle^2_\omega f)(0)g(0)+(\triangle^2_\omega g)(0)f(0)
+4\langle\,\ddbar f\wedge\pr g+\ddbar g\wedge\pr f\,,{\rm Ric\,}_\omega\,\rangle_\omega(0)\\
&\quad-4\langle\,\pr\triangle_\omega f\,,\pr\ol g\,\rangle_\omega(0)
-4\langle\,\ddbar\triangle_\omega f\,,\ddbar\ol g\,\rangle_\omega(0)
-4\langle\,\pr\triangle_\omega g\,,\pr\ol f\,\rangle_\omega(0)\\
&\quad-4\langle\,\ddbar\triangle_\omega g\,,\ddbar\,\ol f\,\rangle_\omega(0)+2(\triangle_\omega f)(0)(\triangle_\omega g)(0)
+8\langle\,\ddbar\pr f\,,\ddbar\pr\ol g\,\rangle_\omega(0)\\
&\quad+4\langle\,D^{1,0}\pr f\,,D^{1,0}\pr\ol g\,\rangle_\omega(0)+4\langle\,D^{0,1}\ddbar f\,,D^{0,1}\ddbar\ol g\,\rangle_\omega(0).
\end{split}
\end{equation}  
From \eqref{s2-ecompVII}, \eqref{s2-ecompII} follows.
\end{proof}

\section{The phase function version of the asymptotic expansion of the kernel of Berezin-Toeplitz quantization}

In this section, we will establish the phase function version of the asymptotic expansion of the kernel of Berezin-Toeplitz quantization which is important in our computations. 

We first review the phase function version of the asymptotic expansion of Bergman kernel.
As in section 1.3, let $\pit^{(k)}$ be the Bergman projection and let $\pit^{(k)}(x,y)\in C^\infty(X\times X,L^k_y\boxtimes L^k_x)$ be the distribution kernel of $\pit^{(k)}$. Let $s$ be a local trivializing section of $L$ over an open set $D\subset X$, $\abs{s}^2=e^{-2\phi}$.
Then on $D\times D$ we can write
\[\pit^{(k)}(x,y)=s(x)^k\pit^{(k)}_{s}(x, y)s^*(y)^k,\]
where $\pit^{(k)}_{s}(x, y)\in C^\infty(D\times D)$ so that for $x\in D$, $u\in C^\infty_0(D,L^k)$,
\begin{equation} \label{s3-eI}
\begin{split}
(\pit^{(k)}u)(x)&=s(x)^k\int_X\pit^{(k)}_{s}(x, y)<u(y), s^*(y)^k>dv_X(y)\\
&=s(x)^k\int_X\pit^{(k)}_{s}(x, y)\Td u(y)dv_X(y),\ \ u=s^k\Td u,\ \ \Td u\in C^\infty_0(D).
\end{split}
\end{equation} 

Let $F(x,y)\in C^\infty(D\times D)$. We say that $F$ is properly supported if ${\rm Supp\,}F\subset D\times D$ is proper. That is, the two projections: $t_x:(x,y)\in{\rm Supp\,}F\To x\in D$, $t_y:(x,y)\in{\rm Supp\,}F\To y\in D$ are proper (i.e. the inverse image of every compact subset of $D$ is compact). 

Catlin~\cite{Cat97} and Zelditch~\cite{Zel98} established the complete asymptotic expansion for $\pit^{(k)}_s$
on the diagonal by using a result of Boutet de Monvel-Sj\"{o}strand~\cite{BouSj76} for the asymptotics of the Szeg\"{o} kernel on a strictly pseudoconvex boundary, here on the boundary of the unit disc bundle and a reduction 
idea of Boutet de Monvel-Guillemin~\cite{BouGu81}. Dai-Lui-Ma~\cite{DLM04a}, Berman-Berndtsson-Sj\"{o}strand~\cite{BBS04} and Ma-Marinescu~\cite{MM08a} obtained the full off-diagonal expansion for $e^{-k\phi(z)+k\phi(w)}\pit^{(k)}_s(z,w)$ by using different methods. Hsiao-Marinescu~\cite{HM11} established the full off-diagonal expansion for the Bergman kernel for lower energy forms without the assumption that $L$ is positive. 
When $L$ is positive, we deduce the full off-diagonal expansion for the Bergman kernel. More precisely, 
we have the following

\begin{thm} \label{s3-tI} 
We recall that we work with Assumption~\ref{s1-a1}. With the notations used above let $D\subset X$ be an open set with holomorphic coordinates $z=(z_1,\ldots,z_n)$ and let $s$ be a local trivializing section of $L$ on 
$D\subset X$ and $\abs{s}^2=e^{-2\phi}$. We also write $w=(w_1,\ldots,w_n)$. Then, 
\begin{equation} \label{s3-eII} 
e^{-k\phi(z)+k\phi(w)}\pit^{(k)}_s(z,w)\equiv e^{ik\Psi(z,w)}b(z,w,k)\mod O(k^{-\infty})\ \ \mbox{on $D\times D$},
\end{equation}
where $b(z,w,k)\in C^\infty(D\times D)$ is properly supported and 
\begin{equation} \label{s3-eIII}
\begin{split}
&b(z,w,k)\equiv\sum^{\infty}_{j=0}k^{n-j}b_j(z,w)\mod O(k^{-\infty})\ \ \mbox{on $D\times D$},\\
&b_j(z, w)\in C^\infty(D\times D),\ \ j=0,1,2,\ldots,\\ 
&\mbox{$\ddbar_zb_j(z,w)$ and $\pr_wb_j(z,w)$ vanish to infinite order at $z=w$, for all $j=0,1,\ldots$,}
\end{split}
\end{equation}
$\Psi(z,w)\in C^\infty(D\times D)$,
$\Psi(z,w)=-\ol\Psi(w,z)$, ${\rm Im\,}\Psi\geq c\abs{z-w}^2$, $c>0$, $\Psi=0$ if amd only if $z=w$. 
Moreover, for a given point $p\in D$, if we take local holomorphic coordinates
$z=(z_1,\ldots,z_n)$ vanishing at $p$, then we have 
\begin{equation} \label{s3-eIV} 
\Psi(z,w)=i\bigr(\phi(z)+\phi(w)\bigr)-2i\sum_{\alpha,\beta\in\mathbb N_0,\abs{\alpha}+\abs{\beta}\leq N}\frac{\pr^{\abs{\alpha}+\abs{\beta}}\phi}{\pr z^\alpha\pr\ol z^\beta}(0)\frac{z^\alpha}{\alpha!}\frac{\ol w^\beta}{\beta!}+O(\abs{(z,w)}^{N+1}),
\end{equation} 
for every $N\in\mathbb N_0$. 
\end{thm}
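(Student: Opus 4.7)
The plan is to derive Theorem~\ref{s3-tI} directly from the off-diagonal Bergman kernel expansion of Hsiao-Marinescu~\cite{HM11}. That work establishes the analogous statement for the spectral projector onto the low-energy eigenspace of the Kodaira Laplacian on $L^k$-valued $(0,q)$-forms; under Assumption~\ref{s1-a1} the spectral gap of the Kodaira Laplacian on $(0,0)$-forms forces this eigenspace to coincide with $\cali{H}^0(X,L^k)$ for $k$ large, so the relevant spectral projector is exactly $\pit^{(k)}$. Thus the FIO representation \eqref{s3-eII}--\eqref{s3-eIII} follows immediately, and what remains is to identify the concrete phase $\Psi$ stated here and to verify its qualitative properties.

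For the phase, I would take $\Psi(z,w)$ to be a smooth almost analytic extension along the diagonal of the function $i\bigl(\phi(z)+\phi(w)-2\phi(z,\ol w)\bigr)$, where $\phi(z,\ol w)$ denotes the polarization of $\phi$ obtained by formally replacing $\ol z$ by $\ol w$ in the Taylor expansion of $\phi$ about $z=w$. The expansion formula \eqref{s3-eIV} is then exactly the truncated Taylor series of this construction at the given point $p$. The skew-symmetry $\Psi(z,w) = -\ol{\Psi(w,z)}$ is immediate from the reality of $\phi$, which gives $\ol{\phi(w,\ol z)} = \phi(z,\ol w)$. The estimate ${\rm Im\,}\Psi \geq c\abs{z-w}^2$ together with the characterization $\Psi=0$ iff $z=w$ reduces by a Taylor expansion at $w=z$ to the strict positivity of $\pr\ddbar\phi$, which is equivalent to Assumption~\ref{s1-a1}.

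The almost holomorphic behaviour of the symbols ($\ddbar_z b_j$ and $\pr_w b_j$ vanishing to infinite order at $z=w$) reformulates the fact that the kernel of $\pit^{(k)}$, after stripping the weights $e^{\pm k\phi}$, must be holomorphic in $z$ and antiholomorphic in $\ol w$ to infinite order along the diagonal. This is forced by the identities $\ddbar_k\pit^{(k)} = 0$ and $(\pit^{(k)})^*=\pit^{(k)}$; translating these into asymptotic conditions on the complex-phase FIO \eqref{s3-eII} yields recursive transport equations for the $b_j$ whose solutions are uniquely determined on the diagonal and whose $\ddbar_z$- and $\pr_w$-derivatives vanish to infinite order there. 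The main obstacle in a fully self-contained argument is the original construction of the FIO representation, which requires either the Boutet de Monvel-Sj\"ostrand analysis of the Szeg\"o kernel on the unit circle bundle of $L^*$ combined with Fourier decomposition in the $S^1$-fibre direction, or the direct microlocal arguments of~\cite{HM11}; we invoke these results as given.
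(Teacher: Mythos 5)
Your proposal is correct and follows essentially the same route as the paper: the paper itself gives no self-contained proof of Theorem~\ref{s3-tI}, but obtains it by citing the off-diagonal Bergman kernel expansions of \cite{DLM04a}, \cite{BBS04}, \cite{MM08a} and especially \cite{HM11}, observing exactly as you do that positivity of $L$ makes the lower-energy spectral projector coincide with $\pit^{(k)}$. Your additional remarks on the polarized phase $i(\phi(z)+\phi(w)-2\phi(z,\ol w))$, its skew-symmetry and positivity via Assumption~\ref{s1-a1}, and the almost-holomorphic behaviour of the $b_j$ are consistent with how these properties are established in the cited works.
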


From \eqref{s1-e3-1} and Theorem~\ref{s3-tI}, we deduce:

\begin{cor} \label{s3-cI}
With the notations used above let $D\subset X$ be an open set with holomorphic coordinates $z=(z_1,\ldots,z_n)$ and let $s$ be a local trivializing section of $L$ on 
$D\subset X$. We also write $w=(w_1,\ldots,w_n)$, $u=(u_1,\ldots,u_n)$. Let $f\in C^\infty(X)$. Then, we have 
\begin{equation}\label{s3-eVI}
e^{-k\phi(z)+k\phi(w)}T^{(k)}_{f,s}(z,w)\equiv\int_D e^{ik(\Psi(z,u)+\Psi(u,w))}b(z,u,k)f(u)b(u,w,k)dV_X(u)\mod O(k^{-\infty})
\end{equation}
on $D$, where $b(z,w,k)\in C^\infty(D\times D)$ and $\Psi(z,w)\in C^\infty(D\times D)$ are as in Theorem~\ref{s3-tI}.

In particular, we have
\begin{equation} \label{s3-eV}
T^{(k)}_f(z)\equiv\int_D e^{ik(\Psi(z,u)+\Psi(u,z))}b(z,u,k)f(u)b(u,z,k)dV_X(u)\mod O(k^{-\infty})\ \ \mbox{on $D$}.
\end{equation}
We remind that the kernel $T^{(k)}_{f,s}(z,w)$ is given by \eqref{s1-e4}.
\end{cor}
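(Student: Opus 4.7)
The plan is to start from the operator identity $T^{(k)}_f = \pit^{(k)} \circ f \circ \pit^{(k)}$, which at the level of Schwartz kernels reads
\[
T^{(k)}_f(z,w) = \int_X \pit^{(k)}(z,u)\, f(u)\, \pit^{(k)}(u,w)\, dv_X(u),
\]
and to reduce this, modulo $O(k^{-\infty})$, to the local trivialized version announced in \eqref{s3-eVI}.

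First I would establish the localization of the $u$-integration to $D$. The phase representation \eqref{s3-eII} of Theorem~\ref{s3-tI}, combined with the estimate ${\rm Im\,}\Psi \geq c\abs{z-w}^2$, gives after a finite covering of $X$ by coordinate charts that $\abs{\pit^{(k)}(x,y)}_{h^{L^k}}$ is $O(k^{-\infty})$ uniformly for $(x,y)$ at positive distance from the diagonal in $X \times X$. Consequently, for $z,w$ ranging over any compact subset $K \Subset D$, the contribution to the integral from $u$ outside a slightly larger compact neighborhood $K' \Subset D$ of $K$ is $O(k^{-\infty})$, so only the part with $u \in D$ matters. The properly supported nature of the amplitude $b(z,w,k)$ in Theorem~\ref{s3-tI} cooperates with this localization.

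Second, on $D$ I would insert the local trivialization. Writing $\pit^{(k)}(z,u) = s(z)^k \pit^{(k)}_s(z,u) s^*(u)^k$ and the analogous identity for $\pit^{(k)}(u,w)$, the paired factor $s^*(u)^k \otimes s(u)^k$ contracts to the identity on $L^k_u$, and one obtains
\[
T^{(k)}_{f,s}(z,w) \equiv \int_D \pit^{(k)}_s(z,u)\, f(u)\, \pit^{(k)}_s(u,w)\, dv_X(u) \mod O(k^{-\infty}).
\]
Inserting the phase-function expansion from Theorem~\ref{s3-tI} into both Bergman factors, the plurisubharmonic weights telescope as
\[
e^{k\phi(z)-k\phi(u)} \cdot e^{k\phi(u)-k\phi(w)} = e^{k\phi(z)-k\phi(w)},
\]
so after multiplying through by $e^{-k\phi(z)+k\phi(w)}$ the identity \eqref{s3-eVI} falls out. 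Specializing to $z = w$ yields \eqref{s3-eV}.

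The step requiring the most care is the off-diagonal localization: one must check that the $O(k^{-\infty})$ decay extracted from ${\rm Im\,}\Psi \geq c\abs{z-w}^2$ is uniform in $k$ and strong enough to dominate both the $O(k^n)$ diagonal size of the Bergman kernel and the boundedness of $f$. This is routine once Theorem~\ref{s3-tI} is available, and the rest of the argument is merely bookkeeping of the trivialization and phase factors.
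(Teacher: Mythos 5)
Your argument is correct and is precisely the deduction the paper intends (the paper presents Corollary~\ref{s3-cI} as an immediate consequence of \eqref{s1-e3-1} and Theorem~\ref{s3-tI} without writing a proof): compose the Schwartz kernels of $\pit^{(k)}\circ f\circ\pit^{(k)}$, discard the $u$-integration outside $D$ via the $O(k^{-\infty})$ off-diagonal decay of the Bergman kernel together with its $O(k^{n})$ on-diagonal bound, and insert the phase representation \eqref{s3-eII} into both factors so that the weights $e^{\pm k\phi(u)}$ cancel. The one point you state a bit loosely is the global off-diagonal decay when $x$ and $y$ lie in no common chart, which does not follow directly from ${\rm Im\,}\Psi\geq c\abs{z-w}^2$ on a single chart but is part of the standard Bergman-kernel estimates behind Theorem~\ref{s3-tI} in the cited references.
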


Now, we study the kernel $e^{-k\phi(z)+k\phi(w)}T^{(k)}_{f,s}(z,w)$. Until further notice, we work on $D$. We write $z=x=(x_1,\ldots,x_{2n})$, $z_j=x_{2j-1}+ix_{2j}$, 
$j=1,\ldots,n$, $w=y=(y_1,\ldots,y_{2n})$, $w_j=y_{2j-1}+iy_{2j}$, $j=1,\ldots,n$, $u=\alpha=(\alpha_1,\ldots,\alpha_{2n})$, $u_j=\alpha_{2j-1}+i\alpha_{2j}$, $j=1,\ldots,n$. Put 
\[F(x,\alpha,y)=:i\Psi(x,\alpha)+i\Psi(\alpha,y).\] 
Since $\Psi(x,\alpha)=-\ol\Psi(\alpha,x)$, we have $F(x,\alpha,x)=-2{\rm Im\,}\Psi(x,\alpha)$. Note that ${\rm Im\,}\Psi(x,\alpha)\geq0$ and ${\rm Im\,}\Psi(x,x)=0$. From this observation, we can check that 
\[\rabs{d_\alpha F(x,\alpha,y)}_{x=y=\alpha}=-2\rabs{{\rm Im\,}d_\alpha\Psi(x,\alpha)}_{x=\alpha}=0.\] 
Moreover, from \eqref{s3-eIV}, it is not difficult to check that 
\[\rabs{\det\left(\frac{\pr^2F}{\pr\alpha_j\pr\alpha_k}\right)^{2n}_{j,k=1}}_{x=y=\alpha}=2^{2n}\bigr(\det\dot R^L(x)\bigr)^2.\] 
Thus, $x=y=\alpha$ are real critical points and $F(x,\alpha,y)$ is a non-degenerate complex valued phase function in 
the sense of Melin-Sj\"{o}strand~\cite{MS74}. 
We can apply the stationary phase formula of Melin-Sj\"{o}strand~\cite{MS74} to carry out the integral in \eqref{s3-eVI} and obtain 
\begin{equation}\label{s3-eVII}
e^{-k\phi(x)+k\phi(y)}T^{(k)}_{f,s}(x,y)\equiv e^{ik\hat\Psi(x,y)}\hat b_f(x,y,k)\mod O(k^{-\infty})\ \ \mbox{on $D\times D$},
\end{equation}
where $\hat b_f(x,y,k)\in C^\infty(D\times D)$ is properly supported,
\begin{equation} \label{s3-eVIII}
\begin{split}
&\hat b_f(x,y,k)\equiv\sum^{\infty}_{j=0}k^{n-j}\hat b_{j,f}(x,y)\mod O(k^{-\infty})\ \ \mbox{on $D\times D$},\\
&\hat b_{j,f}(x, y)\in C^\infty(D\times D),\ \ j=0,1,2,\ldots,
\end{split}
\end{equation}
and $\hat\Psi(x,y)\in C^\infty(D\times D)$, ${\rm Im\,}\hat\Psi\geq0$, $\hat\Psi(x,x)=0$. We claim that 
\begin{equation} \label{s3-eIX}
\mbox{$\Psi(x,y)-\hat\Psi(x,y)$ vanishes to infinite order at $x=y$}.
\end{equation}
Let $f=1$ and notice that the phase $\hat\Psi$ is independent of $f$ and 
\begin{equation} \label{s3-eX}
e^{-k\phi(x)+k\phi(y)}T^{(k)}_{1,s}(x,y)
=e^{-k\phi(x)+k\phi(y)}\pit^{(k)}_{s}(x,y).
\end{equation}
From this observation and \eqref{s3-eII}, we conclude that 
\begin{equation}\label{s3-eXbis}
e^{ik\hat\Psi(x,y)}\hat b_1(x,y,k)=e^{ik\Psi(x,y)}b(x,y,k)+G_k(x,y),
\end{equation}
where $G_k(x,y)\equiv0\mod O(k^{-\infty})$. That is, for every $N\in\mathbb N_0$, every $\alpha,\beta\in\mathbb N_0^{2n}$ and every compact set $K\subset D$, there exists a constant $C_{N,\alpha,K}>0$ independent of $k$, such that 
\[\abs{\pr^\alpha_x\pr^\beta_yG_k(x,y)}\leq C_{N,\alpha,\beta,K}k^{-N},\] 
$x, y\in K$, for $k$ large. We assume that there exist $\alpha_0, \beta_0\in\mathbb N^{2n}_0$ and $(x_0,x_0)\in D\times D$, such that 
\[\rabs{\pr^{\alpha_0}_x\pr^{\beta_0}_y(i\Psi(x,y)-i\hat\Psi(x,y))}_{(x_0,x_0)}=C_{\alpha_0,\beta_0}\neq0,\]
and
\[\rabs{\pr^{\alpha}_x\pr^{\beta}_y(i\Psi(x,y)-i\hat\Psi(x,y))}_{(x_0,x_0)}=0\ \ \mbox{if $\abs{\alpha}+\abs{\beta}<\abs{\alpha_0}+\abs{\beta_0}$}.\] 
From \eqref{s3-eXbis}, we have 
\begin{equation}\label{s3-eXI}
\begin{split}
&\rabs{\pr^{\alpha_0}_x\pr^{\beta_0}_y\Bigr(e^{ik\Psi(x,y)-ik\hat\Psi(x,y)}b(x,y,k)-\hat b_1(x,y,k)\Bigr)}_{(x_0,x_0)}\\
&=-\rabs{\pr^{\alpha_0}_x\pr^{\beta_0}_y\Bigr(e^{-ik\hat\Psi(x,y)}G_k(x,y)\Bigr)}_{(x_0,x_0)}.
\end{split}
\end{equation}
Since $\hat\Psi(x_0,x_0)=0$, we have 
\begin{equation} \label{s3-eXII}
\lim_{k\To\infty}k^{-n-1}\rabs{\pr^{\alpha_0}_x\pr^{\beta_0}_y\Bigr(e^{-ik\hat\Psi(x,y)}G_k(x,y)\Bigr)}_{(x_0,x_0)}=0.
\end{equation} 
On the other hand, we can check that 
\begin{equation} \label{s3-eXIII}
\lim_{k\To\infty}k^{-n-1}\rabs{\pr^{\alpha_0}_x\pr^{\beta_0}_y\Bigr(e^{ik\Psi(x,y)-ik\hat\Psi(x,y)}b(x,y,k)-\hat b_1(x,y,k)\Bigr)}_{(x_0,x_0)}=C_{\alpha_0,\beta_0}b_0(x_0,x_0)\neq0
\end{equation} 
since $b_0(x_0,x_0)\neq0$. From \eqref{s3-eXII}, \eqref{s3-eXIII} and \eqref{s3-eXI}, we get a contradiction. 
The claim \eqref{s3-eIX} follows. 

From \eqref{s3-eIX} and the global theory of Fourier integral operators~\cite{MS74}, we can replace the phase $\hat\Psi$ 
by $\Psi$. Thus, 
\begin{equation}\label{s3-eXIV}
e^{-k\phi(x)+k\phi(y)}T^{(k)}_{f,s}(x,y)\equiv e^{ik\Psi(x,y)}b_f(x,y,k)\mod O(k^{-\infty})\ \ \mbox{on $D\times D$},
\end{equation}
where $b_f(x,y,k)\in C^\infty(D\times D)$ is properly supported,
\begin{equation} \label{s3-eXV}
\begin{split}
&b_f(x,y,k)\equiv\sum^{\infty}_{j=0}k^{n-j}b_{j,f}(x,y)\mod O(k^{-\infty})\ \ \mbox{on $D\times D$},\\
&b_{j,f}(x, y)\in C^\infty(D\times D),\ \ j=0,1,2,\ldots.
\end{split}
\end{equation} 

We claim that 
\begin{equation} \label{s3-eXVI}
\mbox{$\ddbar_zb_{j,f}(x,y)$ and $\pr_wb_{j,f}(x,y)$ vanish to infinite order at $x=y$, for all $j=0,1,\ldots$.}
\end{equation}
In view of \eqref{s3-eIV}, we see that $\ddbar_z(i\Psi(x,y)+\phi(x))$ vanishes to infinite order at $x=y$. From this observation and notice that $\ddbar_zT_{f,s}(x,y)=0$, we conclude that 
\begin{equation}\label{s3-eXVII}
e^{ik\Psi(x,y)}\ddbar_zb_f(x,y,k)=H_k(x,y),
\end{equation}
where $H_k(x,y)\equiv0\mod O(k^{-\infty})$. We assume that there exist $\gamma_0, \delta_0\in\mathbb N^{2n}_0$ and $(x_1,x_1)\in D\times D$, such that 
\[\rabs{\pr^{\gamma_0}_x\pr^{\delta_0}_y(\ddbar_zb_{0,f}(x,y))}_{(x_1,x_1)}=D_{\gamma_0,\delta_0}\neq0,\]
and
\[\rabs{\pr^{\gamma}_x\pr^{\delta}_y(\ddbar_zb_{0,f}(x,y))}_{(x_1,x_1)}=0\ \ \mbox{if $\abs{\gamma}+\abs{\delta}<\abs{\gamma_0}+\abs{\delta_0}$}.\] 
From \eqref{s3-eXVII}, we have 
\begin{equation}\label{s3-eXVIII}
\rabs{\pr^{\gamma_0}_x\pr^{\delta_0}_y\Bigr(\ddbar_zb_f(x,y,k)\Bigr)}_{(x_1,x_1)}
=\rabs{\pr^{\gamma_0}_x\pr^{\delta_0}_y\Bigr(e^{-ik\Psi(x,y)}H_k(x,y)\Bigr)}_{(x_1,x_1)}.
\end{equation}
Since $\Psi(x_1,x_1)=0$, we have 
\begin{equation} \label{s3-eXIX}
\lim_{k\To\infty}k^{-n}\rabs{\pr^{\gamma_0}_x\pr^{\delta_0}_y\Bigr(e^{-ik\Psi(x,y)}H_k(x,y)\Bigr)}_{(x_1,x_1)}=0.
\end{equation} 
On the other hand, we can check that 
\begin{equation} \label{s3-eXX}
\lim_{k\To\infty}k^{-n}\rabs{\pr^{\gamma_0}_x\pr^{\delta_0}_y\Bigr(\ddbar_zb_f(x,y,k)\Bigr)}_{(x_1,x_1)}=D_{\gamma_0,\delta_0}\neq0.
\end{equation} 
From \eqref{s3-eXX}, \eqref{s3-eXIX} and \eqref{s3-eXVIII}, we get a contradiction. 
Thus, $\ddbar_zb_{0,f}(x,y)$ vanishes to infinite order at $x=y$. Similarly, we can repeat the procedure above and conclude 
that $\ddbar_zb_{j,f}(x,y)$ and $\pr_wb_{j,f}(x,y)$ vanish to infinite order at $x=y$, $j=0,1,\ldots$. The claim \eqref{s3-eXVI} follows.  

Summing up, we obtain the phase function version of the asymptotic expansion of the kernel of Berezin-Toeplitz quantization 

\begin{thm} \label{s3-tII}
We recall that we work with Assumption~\ref{s1-a1}. With the notations used above let $D\subset X$ be an open set with holomorphic coordinates $z=(z_1,\ldots,z_n)$ and let $s$ be a local trivializing section of $L$ on 
$D\subset X$ and $\abs{s}^2=e^{-2\phi}$. We also write $w=(w_1,\ldots,w_n)$. Let $f\in C^\infty(X)$ and let $T^{(k)}_{f,s}(z,w)$ be as in \eqref{s1-e4}. Then, 
\begin{equation} \label{s3-eII-bisbbbbb} 
e^{-k\phi(z)+k\phi(w)}T^{(k)}_{f,s}(z,w)\equiv e^{ik\Psi(z,w)}b_f(z,w,k)\mod O(k^{-\infty})\ \ \mbox{on $D\times D$},
\end{equation}
where $b_f(z,w,k)\in C^\infty(D\times D)$ is properly supported and 
\begin{equation} \label{s3-eXXI}
\begin{split}
&b_f(z,w,k)\equiv\sum^{\infty}_{j=0}k^{n-j}b_{j,f}(z,w)\mod O(k^{-\infty})\ \ \mbox{on $D\times D$},\\
&b_{j,f}(z, w)\in C^\infty(D\times D),\ \ j=0,1,2,\ldots,\\ 
&\mbox{$\ddbar_zb_{j,f}(z,w)$ and $\pr_wb_{j,f}(z,w)$ vanish to infinite order at $z=w$, for all $j=0,1,\ldots$,}
\end{split}
\end{equation} 
and $\Psi(z,w)\in C^\infty(D\times D)$ is as in Theorem~\ref{s3-tI}.
\end{thm}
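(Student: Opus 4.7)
The plan is to start from the integral representation in Corollary~\ref{s3-cI},
\[
e^{-k\phi(z)+k\phi(w)}T^{(k)}_{f,s}(z,w)\equiv\int_D e^{ik(\Psi(z,u)+\Psi(u,w))}b(z,u,k)f(u)b(u,w,k)dV_X(u)\mod O(k^{-\infty}),
\]
and apply the complex stationary phase formula of Melin--Sj\"ostrand to integrate out the $u$-variable. First I would verify that $F(x,\alpha,y):=i\Psi(x,\alpha)+i\Psi(\alpha,y)$ is a non-degenerate complex phase: the symmetry $\Psi(x,\alpha)=-\ol{\Psi(\alpha,x)}$ together with ${\rm Im\,}\Psi\geq c|x-\alpha|^2$ and $\Psi(x,x)=0$ forces $\alpha=x=y$ to be the only real critical point, and the Taylor expansion \eqref{s3-eIV} gives the Hessian determinant $2^{2n}(\det\dot R^L(x))^2\neq 0$. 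By Melin--Sj\"ostrand, the integral then has an asymptotic expansion $e^{ik\hat\Psi(x,y)}\hat b_f(x,y,k)$ with ${\rm Im\,}\hat\Psi\geq 0$ and $\hat\Psi(x,x)=0$, and with an amplitude $\hat b_f$ admitting a classical polyhomogeneous expansion in $k$.

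Next I would show that $\Psi-\hat\Psi$ vanishes to infinite order on the diagonal $x=y$. The trick is to specialize $f\equiv 1$, where $T^{(k)}_{1,s}=\pit^{(k)}_s$. Comparing \eqref{s3-eII} to the stationary phase output yields
\[
e^{ik\hat\Psi(x,y)}\hat b_1(x,y,k)\equiv e^{ik\Psi(x,y)}b(x,y,k)\mod O(k^{-\infty}).
\]
A standard argument by contradiction then works: if some mixed derivative of $i\Psi-i\hat\Psi$ at a diagonal point $(x_0,x_0)$ were nonzero of minimal total order, the corresponding derivative of $e^{ik\Psi-ik\hat\Psi}b-\hat b_1$ would grow like $k^{n+1}$ (using $b_0(x_0,x_0)\neq 0$), contradicting the $O(k^{-\infty})$ estimate, since $\hat\Psi(x_0,x_0)=0$ kills the exponential factor on the right. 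Once this diagonal-vanishing is established, the global theory of Fourier integral operators of Melin--Sj\"ostrand allows one to replace $\hat\Psi$ by $\Psi$, absorbing the infinite-order difference into the amplitude. This gives the expansion \eqref{s3-eII-bisbbbbb}--\eqref{s3-eXXI} for the $b_{j,f}$ (except the last property).

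Finally, to prove that $\ddbar_z b_{j,f}$ and $\pr_w b_{j,f}$ vanish to infinite order at $z=w$, I would use that $T^{(k)}_{f,s}(z,w)$, as a section of $L^k$ in $z$ and of $\ol{L^k}$ in $w$, is holomorphic in $z$ and antiholomorphic in $w$, so $\ddbar_z T^{(k)}_{f,s}=0$ and $\pr_w T^{(k)}_{f,s}=0$. Combined with the fact that $\ddbar_z(i\Psi(z,w)+\phi(z))$ and $\pr_w(i\Psi(z,w)-\phi(w))$ vanish to infinite order at $z=w$ (immediate from \eqref{s3-eIV}), one gets
\[
e^{ik\Psi(z,w)}\ddbar_z b_f(z,w,k)\equiv 0\mod O(k^{-\infty}),
\]
and an identical contradiction argument on the leading non-vanishing derivative of $\ddbar_z b_{0,f}$ on the diagonal (using $\Psi(z_1,z_1)=0$) forces all derivatives to vanish there; iterating term by term in $k^{n-j}$ handles all $j$, and the same argument applies to $\pr_w b_{j,f}$.

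I expect the main conceptual obstacle to be the two contradiction arguments producing the diagonal vanishing of $\Psi-\hat\Psi$ and of $\ddbar_z b_{j,f}$, $\pr_w b_{j,f}$. They are essentially parallel but require careful bookkeeping of the minimal order of the nonvanishing derivative, the weights $k^{n+1}$ versus $k^n$ coming from the stationary phase, and the fact that the exponential $e^{-ik\hat\Psi}$ (resp.\ $e^{-ik\Psi}$) contributes only bounded factors on the diagonal; once these are settled, the remaining steps are standard applications of stationary phase and the Fourier integral operator calculus already employed in Theorem~\ref{s3-tI}.
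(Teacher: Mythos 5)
Your proposal is correct and follows essentially the same route as the paper: the Melin--Sj\"ostrand stationary phase applied to the integral of Corollary~\ref{s3-cI} with the non-degenerate phase $i\Psi(x,\alpha)+i\Psi(\alpha,y)$, the specialization $f\equiv1$ plus a minimal-order-derivative contradiction argument to show $\Psi-\hat\Psi$ vanishes to infinite order on the diagonal, the replacement of $\hat\Psi$ by $\Psi$ via the global Fourier integral operator calculus, and a parallel contradiction argument exploiting $\ddbar_zT^{(k)}_{f,s}=0$ and the infinite-order vanishing of $\ddbar_z(i\Psi+\phi(z))$ to obtain the diagonal vanishing of $\ddbar_zb_{j,f}$ and $\pr_wb_{j,f}$. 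The only cosmetic discrepancy is the sign in your parenthetical $\pr_w(i\Psi(z,w)-\phi(w))$, which by \eqref{s3-eIV} should read $\pr_w(i\Psi(z,w)+\phi(w))$; this does not affect the argument.
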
 

From Theorem~\ref{s3-tII}, we deduce:

\begin{cor} \label{s3-cII}
With the notations used above let $D\subset X$ be an open set with holomorphic coordinates $z=(z_1,\ldots,z_n)$ and let $s$ be a local trivializing section of $L$ on 
$D\subset X$. We also write $w=(w_1,\ldots,w_n)$. Let $f, g\in C^\infty(X)$. Then, we have 
\begin{equation}\label{s3-eVI-bisbbb}
(T^{(k)}_f\circ T^{(k)}_g)(z)\equiv\int_D e^{ik(\Psi(z,w)+\Psi(w,z))}b_f(z,w,k)b_g(w,z,k)dV_X(w)\mod O(k^{-\infty})
\end{equation}
on $D$, where $b_f(z,w,k), b_g(w,z,k)\in C^\infty(D\times D)$ are as in Theorem~\ref{s3-tII} and $\Psi(z,w)\in C^\infty(D\times D)$ is as in Theorem~\ref{s3-tI}.
\end{cor}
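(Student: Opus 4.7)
The plan is to imitate the proof of Corollary~\ref{s3-cI}: write $T^{(k)}_f\circ T^{(k)}_g$ as a convolution of Schwartz kernels and then substitute the phase function expansion of Theorem~\ref{s3-tII} into each factor.

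First, fix $z$ in a compact subset of $D$. The composition formula for Schwartz kernels gives
\[
(T^{(k)}_f\circ T^{(k)}_g)(z)=\int_X T^{(k)}_{f,s}(z,w)\,T^{(k)}_{g,s}(w,z)\,dv_X(w),
\]
where each factor is read in the local trivialization $s$ of $L$ over $D$; the natural fiberwise pairings at $z$ and at $w$ combine so that the integrand is intrinsically well-defined.

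Second, I would localize the integration from $X$ to $D$ modulo $O(k^{-\infty})$. By Theorem~\ref{s3-tII} together with ${\rm Im\,}\Psi(z,w)\geq c\abs{z-w}^2$, the weighted kernel $e^{-k\phi(z)+k\phi(w)}T^{(k)}_{f,s}(z,w)$ is bounded by $k^{n}e^{-ck\abs{z-w}^2}$ modulo $O(k^{-\infty})$ on $D\times D$, and similarly for the second factor. Combined with the standard global off-diagonal $O(k^{-\infty})$ decay of the Bergman and Toeplitz kernels on $X$ (inherited from Theorem~\ref{s3-tII} applied to $\pit^{(k)}$ and the identity $T^{(k)}_f=\pit^{(k)}\circ f\circ\pit^{(k)}$), the contribution of $w$ outside any fixed small neighborhood of $z$ is $O(k^{-\infty})$. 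Hence the $w$-integral may be restricted to $D$ up to an $O(k^{-\infty})$ error.

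Third, I would substitute Theorem~\ref{s3-tII} into each factor. Because the exponential weights $e^{\pm k\phi}$ appear symmetrically in $T^{(k)}_{f,s}(z,w)$ and $T^{(k)}_{g,s}(w,z)$, they cancel in the product, yielding
\[
T^{(k)}_{f,s}(z,w)\,T^{(k)}_{g,s}(w,z)\equiv e^{ik(\Psi(z,w)+\Psi(w,z))}b_f(z,w,k)b_g(w,z,k)\mod O(k^{-\infty})
\]
on $D\times D$. Integrating over $D$ and combining with the localization from the previous step produces \eqref{s3-eVI-bisbbb}.

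The main obstacle will be the rigorous justification of the localization step. The phase function description of Theorem~\ref{s3-tII} is only local, so one cannot directly use the Gaussian factor $e^{-ck\abs{z-w}^2}$ to bound the tail of the $w$-integral outside $D$. One must instead invoke the known global off-diagonal $O(k^{-\infty})$ decay of the Bergman kernel on a compact Hermitian manifold, and propagate it to the Toeplitz kernel via $T^{(k)}_f=\pit^{(k)}\circ f\circ\pit^{(k)}$. Once that is granted, the remainder of the argument is essentially a mechanical substitution.
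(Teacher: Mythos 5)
Your proposal is correct and fills in exactly the deduction the paper leaves implicit (the paper simply writes ``From Theorem~\ref{s3-tII}, we deduce''): compose the Schwartz kernels on the diagonal, note that the weights $e^{\pm k\phi}$ cancel so that Theorem~\ref{s3-tII} can be substituted into each factor, and localize the $w$-integral to $D$ using the global off-diagonal $O(k^{-\infty})$ decay together with the fact that $b_f$, $b_g$ are properly supported. This matches the intended argument, so no further comment is needed.
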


\section{The coefficients of the asymptotic expansion of the kernel of Berezin-Toeplitz quantization} 

Let $f\in C^\infty(X)$ and let $b_{j,f}$, $j=0,1,\ldots$, be as in \eqref{s1-e5}. Fix a point $p\in X$. In this section, 
we will give a method for computing $b_{j,f}(p)$, $j=0,1,\ldots$, and we will compute the first three terms explicitly. 
Near $p$, we take local holomorphic coordinates $z=(z_1,\ldots,z_n)$, $z_j=x_{2j-1}+ix_{2j}$, $j=1,\ldots,n$, and local section $s$ defined in some small neighborhood $D$ of $p$ such that \eqref{s2-eI} holds. Until further notice, we work 
with this local coordinates $z$ and we identify $p$ with the point $x=z=0$. 

In view of \eqref{s3-eV}, we see that 
\begin{equation} \label{s4-eI}
T^{(k)}_f(0)=\int_De^{ik(\Psi(0,z)+\Psi(z,0))}b(0,z,k)b(z,0,k)f(z)V_\Theta(z)d\lambda(z)+r_k,
\end{equation}
where $d\lambda(z)=2^ndx_1dx_2\cdots dx_{2n}$, $V_\Theta$ is given by \eqref{s1-e10} and 
\[\lim_{k\to\infty}\frac{r_k}{k^N}=0,\ \ \forall N\geq0.\] 
We notice that since $b(z,w,k)$ is properly supported, we have 
\begin{equation} \label{s4-eII}
b(0,z,k)\in C^\infty_0(D),\ \ b(z,0,k)\in C^\infty_0(D).
\end{equation}
We recall the stationary phase formula of H\"{o}rmander (see Theorem~7.7.5 in~\cite{Hor03}) 

\begin{thm} \label{s4-tI} 
Let $K\subset D$ be a compact set and $N$ a positive integer. If $u\in C^\infty_0(K)$, 
$F\in C^\infty(D)$ and ${\rm Im\,}F\geq0$ in $D$, ${\rm Im\,}F(0)=0$, $F'(0)=0$, ${\rm det\,}F''(0)\neq0$, $F'\neq0$ in $K\setminus\set{0}$ then 
\begin{equation} \label{s4-eIII} 
\begin{split}
&\abs{\int e^{ikF(z)}u(z)V_\Theta(z)d\lambda(z)-2^ne^{ikF(0)}{\rm det\,}\left(\frac{kF''(0)}{2\pi i}\right)^{-\frac{1}{2}}\sum_{j<N}k^{-j}L_ju} \\
&\quad\leq Ck^{-N}\sum_{\abs{\alpha}\leq 2N}\sup{\abs{\pr^\alpha_x u}},\ \ k>0,
\end{split}
\end{equation} 
where $C$ is bounded when $F$ stays in a bounded set in $C^\infty(D)$ and $\frac{\abs{x}}{\abs{F'(x)}}$ has a uniform bounded and
\begin{equation} \label{s4-eIV} 
L_ju=\sum_{\nu-\mu=j}\sum_{2\nu\geq 3\mu}i^{-j}2^{-\nu}<F''(0)^{-1}D,D>^\nu\frac{(h^\mu V_\Theta u)(0)}{\nu!\mu!}.
\end{equation} 
Here 
\begin{equation} \label{s4-eV} 
h(x)=F(x)-F(0)-\frac{1}{2}<F''(0)x,x>
\end{equation} 
and $D=\left(\begin{array}[c]{ccc}
  &-i\pr_{x_1}  \\
  &\vdots\\
  &-i\pr_{x_{2n}}
\end{array}\right)$.
\end{thm}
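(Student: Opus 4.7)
The plan is to follow the classical strategy for stationary phase with a complex phase of non-negative imaginary part: localize to the critical point, replace the phase by its quadratic Taylor polynomial, and reduce the remainder to Gaussian integrals computed by exact Parseval-type identities.

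First I would localize. Pick a cutoff $\chi\in C^\infty_0(D)$ with $\chi\equiv 1$ near $0$ and support inside a small ball. Since $F'(x)\neq 0$ on $K\setminus\set{0}$, the nonstationary-phase lemma for phases with ${\rm Im\,}F\geq 0$ applies to $(1-\chi)u$: integrating by parts repeatedly with an operator of the form $\sum_j a_j(x)\pr_{x_j}$ whose action on $e^{ikF}$ produces a factor $(ik)^{-1}$ times a smooth function (constructed using almost-analytic extensions of $F$ to handle the complex character of $F$ while exploiting $\abs{e^{ikF}}\leq 1$) gives
\[\int e^{ikF(x)}(1-\chi(x))u(x)V_\Theta(x)\,d\lambda(x)=O(k^{-\infty}),\]
with constants controlled in terms of $\abs{x}/\abs{F'(x)}$ and finitely many derivatives of $F$. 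So we can assume $u$ is supported in an arbitrarily small neighborhood of $0$.

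Next I Taylor-expand the phase: $F(x)=F(0)+Q(x)+h(x)$, where $Q(x)=\tfrac{1}{2}\langle F''(0)x,x\rangle$ and $h$ is the cubic remainder in \eqref{s4-eV}. The key exact identity is the complex Gaussian formula: for any $g\in C^\infty_0$ supported near $0$,
\begin{equation*}
\int e^{ikQ(x)}g(x)\,d\lambda(x)=2^n\det\Bigl(\frac{kF''(0)}{2\pi i}\Bigr)^{-1/2}\sum_{\nu<M}\frac{1}{\nu!}\Bigl(\frac{\langle F''(0)^{-1}D,D\rangle}{2ik}\Bigr)^{\nu}g(0)+R_M,
\end{equation*}
with $R_M=O(k^{-M}\sum_{\abs{\alpha}\leq 2M}\sup\abs{\pr^\alpha g})$. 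This is derived by taking the Fourier transform of $e^{ikQ}$ (a Gaussian, with the correct branch of the square root dictated by ${\rm Im\,}F''(0)\geq 0$) and applying Parseval; the contour is deformed using an almost-analytic extension of $g$, legitimized by ${\rm Im\,}Q\geq 0$.

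Now I write $e^{ikF(x)}=e^{ikF(0)}e^{ikQ(x)}e^{ikh(x)}$ and expand
\[e^{ikh(x)}=\sum_{\mu<M'}\frac{(ikh(x))^\mu}{\mu!}+\text{remainder},\]
inserting this into the localized integral and applying the Gaussian formula term-by-term to $g_\mu(x)=(ikh(x))^\mu u(x)V_\Theta(x)/\mu!$. The factor $(ik)^\mu$ contributes $k^\mu$, while the $\nu$-th Gaussian term contributes $k^{-\nu}$, so the net order is $k^{\mu-\nu}=k^{-j}$ with $j=\nu-\mu$; collecting by $j$ and combining the phases $i^\mu\cdot i^{-\nu}=i^{-j}$ and the $2^{-\nu}$ factors yields exactly the stated $L_ju$. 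The constraint $2\nu\geq 3\mu$ drops out automatically because $h$ vanishes to order three at $0$, so $h^\mu$ vanishes to order $3\mu$ and $\langle F''(0)^{-1}D,D\rangle^\nu(h^\mu uV_\Theta)(0)=0$ whenever $2\nu<3\mu$. Truncating both expansions at levels chosen in terms of $N$ and bookkeeping the remainders (using that the Gaussian remainder $R_M$ only costs $2M$ derivatives of $g_\mu$, each of which is a polynomial expression in the derivatives of $F$, $u$, $V_\Theta$) gives the bound on the right-hand side of \eqref{s4-eIII} with $C$ uniform as $F$ varies in a bounded set of $C^\infty$ and $\abs{x}/\abs{F'(x)}$ stays bounded.

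The main obstacle is the rigorous treatment of the complex Gaussian and the nonstationary-phase localization: both require contour deformation or almost-analytic extensions to exploit ${\rm Im\,}F\geq 0$ without losing the polynomial character of the expansion. Once this analytic input is in place, the combinatorial identification with the formula \eqref{s4-eIV} is mechanical. Since all of this is well established (this is precisely Theorem~7.7.5 of Hörmander~\cite{Hor03}), I would not reproduce the technical details, and instead refer to that source.
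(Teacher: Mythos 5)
The paper offers no proof of this statement at all: it simply recalls it as Theorem~7.7.5 of H\"ormander~\cite{Hor03}, which is exactly where your argument also terminates. Your sketch (localization by non-stationary phase, reduction to the quadratic phase via the complex Gaussian/Parseval identity, Taylor expansion of $e^{ikh}$ with the power count $k^{\mu-\nu}$ and the vanishing $\langle F''(0)^{-1}D,D\rangle^{\nu}(h^{\mu}uV_\Theta)(0)=0$ for $2\nu<3\mu$) is an accurate outline of H\"ormander's own proof, so the proposal is correct and in substance identical to the paper's treatment.
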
 

Now, we apply \eqref{s4-eIII} to the integral in \eqref{s4-eI}. Put 
\[F(z)=\Psi(0,z)+\Psi(z,0).\] 
From \eqref{s3-eIV} and \eqref{s2-eI}, we see that 
\begin{equation} \label{s4-eVI}
\begin{split}
&F(z)=2i\sum^n_{j=1}\lambda_j\abs{z_j}^2+2i\phi_1(z)+O(\abs{z}^N),\ \ \forall N\geq0,\\
&h(z)=2i\phi_1(z)+O(\abs{z}^N),\ \ \forall N\geq0,
\end{split}
\end{equation}
where $h$ is given by \eqref{s4-eV}. Moreover, we can check that 
\begin{equation} \label{s4-eVII}
{\rm det\,}\left(\frac{kF''(0)}{2\pi i}\right)^{-\frac{1}{2}}=k^{-n}\pi^n2^{-n}\lambda^{-1}_1\lambda^{-1}_2\cdots\lambda_n^{-1}=k^{-n}\pi^n\bigr(\det\dot{R}^L(0)\bigr)^{-1}
\end{equation}
and
\begin{equation} \label{s4-eVIII}
<F''(0)^{-1}D,D>=i\triangle_0.
\end{equation}
We recall that $\triangle_0$ is given by \eqref{s2-eV-I}. From \eqref{s4-eVI}, \eqref{s4-eVIII} and notice that $h=O(\abs{z}^4)$, it is not 
difficult to see that  
\begin{equation} \label{s4-eIX} 
L_j(b(0,z,k)b(z,0,k)f)=\sum_{\nu-\mu=j}\sum_{2\nu\geq 4\mu}(-1)^\mu2^{-j}\frac{\triangle_0^\nu\bigr(\phi_1^\mu V_\Theta b(0,z,k)b(z,0,k)f\bigr)(0)}{\nu!\mu!},
\end{equation} 
where $L_j$ is given by \eqref{s4-eIV}. We notice that 
\[b(0,z,k)\equiv\sum^\infty_{j=0}b_j(0,z)k^{n-j}\mod O(k^{-\infty}),\ \ 
b(z,0,k)\equiv\sum^\infty_{j=0}b_j(z,0)k^{n-j}\mod O(k^{-\infty}).\] 
From this observation, \eqref{s4-eIX} becomes: 
\begin{equation} \label{s4-eX} 
\begin{split}
&L_j(b(0,z,k)b(z,0,k)f)\\
&=\sum_{\nu-\mu=j}\sum_{2\nu\geq 4\mu}\sum_{0\leq s+t\leq N}(-1)^\mu2^{-j}
\frac{k^{2n-s-t}\triangle_0^\nu\bigr(\phi_1^\mu V_\Theta fb_s(0,z)b_t(z,0)\bigr)(0)}{\nu!\mu!}+O(k^{2n-N-1}),
\end{split}
\end{equation} 
for all $N\geq0$. From \eqref{s4-eX}, \eqref{s4-eVII}, \eqref{s4-eIII} and \eqref{s4-eI}, we get 
\begin{equation} \label{s4-eXI}
\begin{split}
&T^{(k)}_f(0)=(2\pi)^n(\det\dot{R}^L(0))^{-1}\times\\
&\sum^N_{j=0}k^{n-j}\Bigr(\sum_{0\leq m\leq j}\sum_{\nu-\mu=m}\sum_{2\nu\geq 4\mu}\sum_{s+t=j-m}(-1)^\mu2^{-m}
\frac{\triangle_0^\nu\bigr(\phi_1^\mu V_\Theta fb_s(0,z)b_t(z,0)\bigr)(0)}{\nu!\mu!}\Bigr)\\
&\quad+O(k^{n-N-1}),\ \ \forall N\geq0.
\end{split}
\end{equation} 
Combining \eqref{s4-eXI} with \eqref{s1-e5}, we obtain

\begin{thm} \label{s4-tII} 
For $b_{j,f}$, $j=0,1,\ldots$, in \eqref{s1-e5}, we have 
\begin{equation} \label{s4-eXII}
\begin{split}
&b_{j,f}(0)\\
&=(2\pi)^n(\det\dot{R}^L(0))^{-1}\sum_{0\leq m\leq j}\sum_{\nu-\mu=m}\sum_{2\nu\geq 4\mu}\sum_{s+t=j-m}(-1)^\mu2^{-m}
\frac{\triangle_0^\nu\bigr(\phi_1^\mu V_\Theta fb_s(0,z)b_t(z,0)\bigr)(0)}{\nu!\mu!},
\end{split}
\end{equation} 
for all $j=0,1,\ldots$. 

In particular, 
\begin{equation} \label{s4-eXIII}
b_{0,f}(0)=(2\pi)^n(\det\dot{R}^L(0))^{-1}f(0)b_0(0,0)^2,
\end{equation}
\begin{equation} \label{s4-eXIV}
\begin{split}
b_{1,f}(0)&=(2\pi)^n(\det\dot{R}^L(0))^{-1}\Bigr(2f(0)b_0(0,0)b_1(0,0)\\
&\quad+\frac{1}{2}\triangle_0\bigr(V_\Theta fb_0(0,z)b_0(z,0)\bigr)(0)
-\frac{1}{4}\triangle^2_0\bigr(\phi_1V_\Theta fb_0(0,z)b_0(z,0)\bigr)(0)\Bigr)
\end{split}
\end{equation}
and 
\begin{equation} \label{s4-eXV}
\begin{split}
b_{2,f}(0)&=(2\pi)^n(\det\dot{R}^L(0))^{-1}\Bigr(2f(0)b_0(0,0)b_2(0,0)+f(0)b_1(0,0)^2\\
&\quad+\frac{1}{2}\triangle_0\bigr(V_\Theta f(b_0(0,z)b_1(z,0)+b_1(0,z)b_0(z,0))\bigr)(0)\\
&\quad-\frac{1}{4}\triangle^2_0\bigr(\phi_1V_\Theta f(b_0(0,z)b_1(z,0)+b_1(0,z)b_0(z,0))\bigr)(0)\\
&\quad+\frac{1}{8}\triangle^2_0\bigr(V_\Theta fb_0(0,z)b_0(z,0)\bigr)(0)
-\frac{1}{24}\triangle^3_0\bigr(\phi_1V_\Theta fb_0(0,z)b_0(z,0)\bigr)(0)\\
&\quad+\frac{1}{192}\triangle^4_0\bigr(\phi_1^2V_\Theta fb_0(0,z)b_0(z,0)\bigr)(0)\Bigr).
\end{split}
\end{equation}
\end{thm}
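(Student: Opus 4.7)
The plan is to apply H\"ormander's stationary phase formula (Theorem~\ref{s4-tI}) to the integral representation \eqref{s3-eV} evaluated at $z=0$, i.e.\ to equation \eqref{s4-eI}. Since $b(0,\cdot,k)$ and $b(\cdot,0,k)$ are properly supported (Theorem~\ref{s3-tI}), the amplitude lies in $C^\infty_0(D)$, so the hypotheses of Theorem~\ref{s4-tI} apply once we verify that $F(z):=\Psi(0,z)+\Psi(z,0)$ has a unique non-degenerate critical point at $z=0$ with ${\rm Im\,}F\ge 0$. The latter is immediate from the properties $\Psi(z,w)=-\ol\Psi(w,z)$ and ${\rm Im\,}\Psi\ge 0$ of Theorem~\ref{s3-tI}, which together give $F(z)=2i\,{\rm Im\,}\Psi(0,z)$.

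First I would Taylor-expand $F$ and the auxiliary function $h$ of \eqref{s4-eV}. Substituting \eqref{s3-eIV} into the normal form \eqref{s2-eI} gives $F(z)=2i\sum_j\lambda_j\abs{z_j}^2+2i\phi_1(z)+O(\abs{z}^N)$ for every $N$, and hence $h(z)=2i\phi_1(z)+O(\abs{z}^N)$; this is \eqref{s4-eVI}. A direct Hessian computation then produces $\det(kF''(0)/(2\pi i))^{-1/2}=k^{-n}\pi^n(\det\dot R^L(0))^{-1}$ and $\langle F''(0)^{-1}D,D\rangle=i\triangle_0$, with $\triangle_0$ as in \eqref{s2-eV-I}. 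These are \eqref{s4-eVII}--\eqref{s4-eVIII}.

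Next I would plug these ingredients into the operator $L_j$ of \eqref{s4-eIV}. The step deserving the most care is the improvement from the generic Melin--Sj\"ostrand/H\"ormander constraint $2\nu\ge 3\mu$ to $2\nu\ge 4\mu$: the normal form \eqref{s2-eI} forces $\phi_1$ to vanish to order $4$ at $0$, so $h$ does as well, and hence every term with $2\nu<4\mu$ produces a derivative pattern $\triangle_0^\nu(\phi_1^\mu\cdots)$ that vanishes at $0$. This collapses the sum to \eqref{s4-eIX}. Inserting the expansions $b(0,z,k)\equiv\sum_s k^{n-s}b_s(0,z)$ and $b(z,0,k)\equiv\sum_t k^{n-t}b_t(z,0)$ from Theorem~\ref{s3-tI} and regrouping by powers of $k$ gives \eqref{s4-eX}--\eqref{s4-eXI}; matching against \eqref{s1-e5} then yields the general formula \eqref{s4-eXII}.

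The three explicit identities \eqref{s4-eXIII}--\eqref{s4-eXV} reduce to enumerating the quintuples $(m,\nu,\mu,s,t)$ with $\nu-\mu=m\le j$, $2\nu\ge 4\mu$, and $s+t=j-m$, which for $j=0,1,2$ is a very short list (only the $j=2$ case requires a handful of contributions). I expect no serious conceptual obstacle: the main work is careful index bookkeeping, together with the observation about the order-$4$ vanishing of $h$, and the uniform control of the remainder in Theorem~\ref{s4-tI} on the compact support of the amplitude, which is standard once the expansions of $b(0,z,k)$ and $b(z,0,k)$ are truncated to sufficient order.
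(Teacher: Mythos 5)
Your proposal is correct and takes essentially the same route as the paper: apply H\"ormander's stationary phase formula to \eqref{s4-eI}, use the normal form \eqref{s2-eI} to get $h(z)=2i\phi_1(z)+O(\abs{z}^N)$ and thereby sharpen the constraint $2\nu\geq 3\mu$ to $2\nu\geq 4\mu$, compute the Hessian data \eqref{s4-eVII}--\eqref{s4-eVIII}, insert the expansions of $b(0,z,k)$ and $b(z,0,k)$, and collect powers of $k$. The explicit cases $j=0,1,2$ are exactly the short index enumeration you describe.
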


\subsection{The coefficient $b_{0,f}$}
We notice that when $f=1$, $b_{0,f}(0)=b_0(0,0)$. From this observation and \eqref{s4-eXIII}, we obtain
\[b_0(0,0)=(2\pi)^n(\det\dot{R}^L(0))^{-1}b_0(0,0)^2.\]
Thus,
\begin{equation} \label{s4-eXVI}
b_0(0,0)=(2\pi)^{-n}\det\dot{R}^L(0).
\end{equation}
Combining \eqref{s4-eXVI} with \eqref{s4-eXIII}, we get
\begin{equation}\label{s4-eXVII}
b_{0,f}(0)=(2\pi)^{-n}\det\dot{R}^L(0)f(0).
\end{equation}
From this, \eqref{s1-e16-I} follows. 

\subsection{The coefficient $b_{1,f}$}

In view of \eqref{s4-eXIV}, we see that to compute $b_{1,f}(0)$ we have to know which global geometric functions
at $z=0$ equal to $\triangle_0\bigr(V_\Theta fb_0(0,z)b_0(z,0)\bigr)(0)$ and $\triangle_0^2\bigr(\phi_1V_\Theta fb_0(0,z)b_0(z,0)\bigr)(0)$.
Now we compute $\triangle_0\bigr(V_\Theta fb_0(0,z)b_0(z,0)\bigr)(0)$. First, we need

\begin{lem}\label{s4-lI}
We have
\begin{equation}\label{s4-eXVIII}\begin{split}
\rabs{\frac{\pr b_0(z,0)}{\pr z_s}}_{z=0}&=-(2\pi)^{-n}\det\dot R^L(0)\frac{\pr V_\Theta}{\pr z_s}(0),\\
\rabs{\frac{\pr b_0(0,z)}{\pr\ol z_s}}_{z=0}&=-(2\pi)^{-n}\det\dot R^L(0)\frac{\pr V_\Theta}{\pr\ol z_s}(0),
\end{split}\qquad s=1,\ldots,n,
\end{equation}
\begin{equation}\label{s4-eXIX}\begin{split}
\rabs{\frac{\pr^2 b_0(z,0)}{\pr z_s\pr z_t}}_{z=0}&=(2\pi)^{-n}\det\dot R^L(0)\bigr(2\frac{\pr V_\Theta}{\pr z_s}(0)
\frac{\pr V_\Theta}{\pr z_t}(0)-\frac{\pr^2 V_\Theta}{\pr z_s\pr z_t}(0)\bigr),\\
\rabs{\frac{\pr^2 b_0(0,z)}{\pr\ol z_s\pr\ol z_t}}_{z=0}&=(2\pi)^{-n}\det\dot R^L(0)\bigr(2\frac{\pr V_\Theta}{\pr\ol z_s}(0)
\frac{\pr V_\Theta}{\pr\ol z_t}(0)-\frac{\pr^2 V_\Theta}{\pr\ol z_s\pr\ol z_t}(0)\bigr),
\end{split}\qquad s,t=1,\ldots,n,
\end{equation}
\begin{equation}\label{s4-eXX}\begin{split}
&\rabs{\frac{\pr^{\abs{\alpha}+\abs{\beta}}b_0(z,0)}{\pr z^\alpha\pr\ol z^\beta}}_{z=0}=0,\qquad
\forall\alpha,\beta\in\mathbb N^n_0,\ \ \abs{\beta}>0,\\
&\rabs{\frac{\pr^{\abs{\gamma}+\abs{\delta}}b_0(0,z)}{\pr z^\gamma\pr\ol z^\delta}}_{z=0}=0,\qquad
\forall\gamma,\delta\in\mathbb N^n_0,\ \ \abs{\gamma}>0.
\end{split}
\end{equation}
\end{lem}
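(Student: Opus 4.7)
My plan rests on three ingredients: (i) the infinite-order vanishings $\ddbar_zb_0(z,w)=O_\infty$ and $\pr_wb_0(z,w)=O_\infty$ at the diagonal $\{z=w\}$ from Theorem~\ref{s3-tII}; (ii) the global identity $b_0(z,z)=(2\pi)^{-n}\det\dot R^L(z)$, which is valid throughout the chart because $b_0(z,z)$ agrees with the intrinsic leading coefficient of $T^{(k)}_1=\pit^{(k)}$ on the diagonal and \eqref{s4-eXVI} may be applied at every base point of $X$; (iii) the adapted-coordinate condition \eqref{s2-eI}, which kills low order derivatives of $\phi_1$ at $z=0$.

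I first establish \eqref{s4-eXX}. Writing $\pr^\alpha_z\pr^\beta_{\ol z}b_0(z,0)|_{z=0}=\pr^\alpha_z\pr^\beta_{\ol z}b_0(z,w)|_{(z,w)=(0,0)}$ and extracting one $\pr_{\ol z_s}$ (for an index $s$ with $\beta_s\geq 1$), the expression becomes $\pr^\alpha_z\pr^{\beta'}_{\ol z}(\pr_{\ol z_s}b_0)(z,w)|_{(0,0)}$ with $\beta'$ obtained from $\beta$ by lowering its $s$-th entry by one. Since $\pr_{\ol z_s}b_0$ is a component of $\ddbar_zb_0$, it vanishes to infinite order at $\{z=w\}$, so \emph{every} derivative of it at the diagonal point $(0,0)$ is zero. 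The second statement of \eqref{s4-eXX} follows analogously by peeling off one $\pr_{w_s}$, using $\pr_wb_0=O_\infty$ at the diagonal.

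Next I prove \eqref{s4-eXVIII}. Differentiating the identity $b_0(z,z)=(2\pi)^{-n}\det\dot R^L(z)$ once, the chain rule yields $\pr_{z_s}[b_0(z,z)]=(\pr_{z_s}b_0)(z,z)+(\pr_{w_s}b_0)(z,z)$, and the second summand vanishes identically on the diagonal by (i). Evaluating at $z=0$ leaves $(\pr_{z_s}b_0)(0,0)=(2\pi)^{-n}\pr_{z_s}[\det\dot R^L]|_0$. From the general formula $\det\dot R^L=2^n\det(\pr^2\phi/\pr z_j\pr\ol z_k)/V_\Theta$ and the adapted-coordinate relation $\pr_{z_s}(\pr^2\phi/\pr z_j\pr\ol z_k)|_0=\pr^3\phi_1/\pr z_s\pr z_j\pr\ol z_k|_0=0$, the derivative of the numerator drops out and only $-\det\dot R^L(0)\,\pr_{z_s}V_\Theta(0)$ survives, giving the first line of \eqref{s4-eXVIII}. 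The second line follows either by the same argument applied in the $w$-slot or directly from the Hermitian symmetry $b_0(z,w)=\overline{b_0(w,z)}$ together with the reality of $V_\Theta$.

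Finally, \eqref{s4-eXIX} is obtained by differentiating the same identity a second time. The chain rule produces four terms; three of them contain a $\pr_w$-derivative of $b_0$ and therefore vanish on the diagonal by (i), leaving $(\pr_{z_s}\pr_{z_t}b_0)(0,0)=(2\pi)^{-n}\pr_{z_s}\pr_{z_t}[\det\dot R^L]|_0$. Jacobi's formula combined with the adapted-coordinate identities $\pr_{z_s}(\pr^2\phi/\pr z_j\pr\ol z_k)|_0=0$ and $\pr_{z_s}\pr_{z_t}(\pr^2\phi/\pr z_j\pr\ol z_j)|_0=\pr^4\phi_1/\pr z_s\pr z_t\pr z_j\pr\ol z_j|_0=0$ makes the contribution from $\det(\pr^2\phi)$ disappear, and only the quadratic contribution from $V_\Theta^{-1}$ remains, producing the factor $2\pr_{z_s}V_\Theta(0)\pr_{z_t}V_\Theta(0)-\pr_{z_s}\pr_{z_t}V_\Theta(0)$. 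The main obstacle here is the bookkeeping of the cross terms coming from both the chain rule and Jacobi's formula; each of these terms ultimately vanishes via either (i) or (iii), and once those cancellations are tracked every formula in the lemma reduces to a direct computation.
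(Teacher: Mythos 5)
Your proposal is correct and follows essentially the same route as the paper: both arguments derive \eqref{s4-eXX} from the infinite-order vanishing of $\ddbar_zb_0$ and $\pr_wb_0$ at the diagonal, use that vanishing to reduce pure $z$- (resp. $\ol z$-) derivatives of $b_0(z,0)$ (resp. $b_0(0,z)$) at $0$ to derivatives of the diagonal restriction $b_0(z,z)=(2\pi)^{-n}\det\dot R^L(z)=V_\omega(z)V_\Theta(z)^{-1}$, and then observe that the adapted coordinates \eqref{s2-eI} kill all pure derivatives of the factor $V_\omega=\pi^{-n}\det\bigl(\pr^2\phi/\pr z_j\pr\ol z_k\bigr)$ at $0$, leaving only the derivatives of $V_\Theta^{-1}$ (your invocation of Jacobi's formula for $\det\Phi$ is the same computation as the paper's \eqref{s4-eXXIII}). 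The only cosmetic difference is your optional appeal to the Hermitian symmetry $b_0(z,w)=\overline{b_0(w,z)}$, which the paper does not use and which you correctly back up with the direct $w$-slot argument anyway.
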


\begin{proof}
In view of \eqref{s3-eIII}, we see that 
\begin{equation}\label{s4-eXXI}
\ddbar_zb_0(z,0)=O(\abs{z}^N),\qquad \pr_zb_0(0,z)=O(\abs{z}^N),\qquad\forall N\geq 0.
\end{equation}
From \eqref{s4-eXXI}, \eqref{s4-eXX} follows.

From \eqref{s4-eXVI}, we see that
\begin{equation}\label{s4-eXXII}
b_0(z,z)=(2\pi)^{-n}\det\dot{R}^L(z)=V_\omega(z)\bigr(V_\Theta(z)\bigr)^{-1}.
\end{equation}
In view of \eqref{s2-eI}, we see that
\begin{equation}\label{s4-eXXIII}
\frac{\pr^{\abs{\alpha}}V_\omega}{\pr\ol z^\alpha}(0)=\frac{\pr^{\abs{\alpha}}V_\omega}{\pr z^\alpha}(0)=0,
\qquad\forall\alpha\in\mathbb N_0^n.
\end{equation}
From \eqref{s4-eXXII}, \eqref{s4-eXXI} and \eqref{s4-eXXIII}, we have
\begin{equation}\label{s4-eXXIV}\begin{split}
\rabs{\frac{\pr^{\abs{\alpha}}b_0(z,z)}{\pr z^\alpha}}_{z=0}
&=\rabs{\frac{\pr^{\abs{\alpha}}b_0(z,0)}{\pr z^\alpha}}_{z=0}
=V_\omega(0)\rabs{\frac{\pr^{\abs{\alpha}}}{\pr z^\alpha}\bigr(V_\Theta(z)\bigr)^{-1}}_{z=0}\\
&=(2\pi)^{-n}\det\dot R^L(0)\rabs{\frac{\pr^{\abs{\alpha}}}{\pr z^\alpha}\bigr(V_\Theta(z)\bigr)^{-1}}_{z=0},\\
\rabs{\frac{\pr^{\abs{\alpha}}b_0(z,z)}{\pr\ol z^\alpha}}_{z=0}
&=\rabs{\frac{\pr^{\abs{\alpha}}b_0(0,z)}{\pr\ol z^\alpha}}_{z=0}
=V_\omega(0)\rabs{\frac{\pr^{\abs{\alpha}}}{\pr\ol z^\alpha}\bigr(V_\Theta(z)\bigr)^{-1}}_{z=0}\\
&=(2\pi)^{-n}\det\dot R^L(0)\rabs{\frac{\pr^{\abs{\alpha}}}{\pr\ol z^\alpha}\bigr(V_\Theta(z)\bigr)^{-1}}_{z=0},
\end{split}
\end{equation}
for all $\alpha\in\mathbb N_0^n$. We can compute
\begin{equation}\label{s4-eXXV}
\rabs{\frac{\pr}{\pr z_s}\bigr(V_\Theta(z)\bigr)^{-1}}_{z=0}=-\frac{\pr V_\Theta}{\pr z_s}(0),\ \ 
\rabs{\frac{\pr}{\pr\ol z_s}\bigr(V_\Theta(z)\bigr)^{-1}}_{z=0}=-\frac{\pr V_\Theta}{\pr\ol z_s}(0),\ \ 
s=1,\ldots,n,
\end{equation}
\begin{equation}\label{s4-eXXVI}\begin{split}
\rabs{\frac{\pr^2}{\pr z_s\pr z_t}\bigr(V_\Theta(z)\bigr)^{-1}}_{z=0}& =2\frac{\pr V_\Theta}{\pr z_s}(0)\frac{\pr V_\Theta}{\pr z_t}(0)
-\frac{\pr^2V_\Theta}{\pr z_s\pr z_t}(0),\\ 
\rabs{\frac{\pr^2}{\pr\ol z_s\pr\ol z_t}\bigr(V_\Theta(z)\bigr)^{-1}}_{z=0}& =2\frac{\pr V_\Theta}{\pr\ol z_s}(0)
\frac{\pr V_\Theta}{\pr\ol z_t}(0)-\frac{\pr^2V_\Theta}{\pr\ol z_s\pr\ol z_t}(0),
\end{split}\qquad s,t=1,\ldots,n. 
\end{equation}
From \eqref{s4-eXXV}, \eqref{s4-eXXVI} and \eqref{s4-eXXIV}, we obtain \eqref{s4-eXVIII} and \eqref{s4-eXIX}.
\end{proof}

From \eqref{s4-eXVIII} and \eqref{s4-eXX}, it is straightforward to see that
\begin{equation}\label{s4-eXXVII}\begin{split}
\triangle_0\bigr(V_\Theta fb_0(0,z)b_0(z,0)\bigr)(0)& =\sum^n_{j=1}\frac{1}{\lambda_j}\Bigr(\frac{\pr^2 V_\Theta}{\pr z_j\pr\ol z_j}(0)
-\abs{\frac{\pr V_\Theta}{\pr z_j}(0)}^2\Bigr)b_0(0,0)^2f(0)\\
&\quad+\bigr(\triangle_0f\bigr)(0)b_0(0,0)^2.\end{split}
\end{equation}
Combining \eqref{s4-eXXVII} with \eqref{s2-eX}, \eqref{s2-eXII-I} and \eqref{s4-eXVI}, we get
\begin{equation}\label{s4-eXXVIII}
\triangle_0\bigr(V_\Theta fb_0(0,z)b_0(z,0)\bigr)(0)=\Bigr(-\frac{1}{2\pi}\hat r(0)f(0)-\frac{1}{2\pi}\bigr(\triangle_\omega f\bigr)(0)\Bigr)
(2\pi)^{-2n}(\det\dot R^L(0))^2.
\end{equation} 

Now, we compute $\triangle^2_0\bigr(\phi_1V_\Theta fb_0(0,z)b_0(z,0)\bigr)(0)$. Since $\phi_1=O(\abs{z}^4)$, we have
\begin{equation}\label{s4-eXXIX}\begin{split}
\triangle^2_0\bigr(\phi_1V_\Theta fb_0(0,z)b_0(z,0)\bigr)(0)& =\bigr(\triangle^2_0\phi_1\bigr)(0)f(0)b_0(0,0)^2 \\
&=\bigr(\triangle^2_0\phi\bigr)(0)f(0)b_0(0,0)^2 \\
&=\sum^n_{s,t=1}\frac{1}{\lambda_s\lambda_t}\frac{\pr^4\phi}{\pr\ol z_s\pr z_s\pr\ol z_t\pr z_t}(0)f(0)b_0(0,0)^2.
\end{split}
\end{equation}
Combining \eqref{s4-eXXIX} with \eqref{s2-eIX} and \eqref{s4-eXVI}, we get
\begin{equation}\label{s4-eXXX}
\triangle^2_0\bigr(\phi_1V_\Theta fb_0(0,z)b_0(z,0)\bigr)(0)=-\frac{1}{2\pi}r(0)f(0)(2\pi)^{-2n}(\det\dot R^L(0))^2.
\end{equation}

From \eqref{s4-eXIV}, \eqref{s4-eXVI}, \eqref{s4-eXXVIII} and \eqref{s4-eXXX}, we conclude that
\begin{equation}\label{s4-eXXXI} \begin{split}
&b_{1,f}(0)\\
&=2f(0)b_1(0,0)+(2\pi)^{-n}\det\dot R^L(0)\Bigr(-\frac{1}{4\pi}\hat r(0)f(0)-
\frac{1}{4\pi}\bigr(\triangle_\omega f\bigr)(0)+\frac{1}{8\pi}r(0)f(0)\Bigr).
\end{split}
\end{equation}
Let $f=1$ in \eqref{s4-eXXXI} and notice that when $f=1$, $b_{1,f}(0)=b_1(0,0)$, we deduce
\begin{equation}\label{s4-eXXXII}
b_1(0,0)=(2\pi)^{-n}\det\dot R^L(0)\Bigr(\frac{1}{4\pi}\hat r(0)-\frac{1}{8\pi}r(0)\Bigr).
\end{equation}
Combining \eqref{s4-eXXXII} with \eqref{s4-eXXXI}, we obtain
\begin{equation}\label{s4-eXXXIII}
b_{1,f}(0)=(2\pi)^{-n}\det\dot R^L(0)\Bigr(\frac{1}{4\pi}\hat r(0)f(0)-\frac{1}{8\pi}r(0)f(0)-
\frac{1}{4\pi}\bigr(\triangle_\omega f\bigr)(0)\Bigr).
\end{equation}
From \eqref{s4-eXXXIII}, \eqref{s1-e16-II} follows. 

\subsection{The coefficient $b_{2,f}$}
To know $b_{2,f}(0)$, we have to compute
\[\begin{split}
&\triangle_0\bigr(V_\Theta f(b_0(0,z)b_1(z,0)+b_1(0,z)b_0(z,0))\bigr)(0),\\
&\triangle^2_0\bigr(\phi_1V_\Theta f(b_0(0,z)b_1(z,0)+b_1(0,z)b_0(z,0))\bigr)(0),\\
&\triangle^2_0\bigr(V_\Theta fb_0(0,z)b_0(z,0)\bigr)(0),\ \ \triangle^3_0\bigr(\phi_1V_\Theta fb_0(0,z)b_0(z,0)\bigr)(0),\\
&\triangle^4_0\bigr(\phi^2_1V_\Theta fb_0(0,z)b_0(z,0)\bigr)(0).
\end{split}\]
(See \eqref{s4-eXV}.) First, we compute $\triangle_0\bigr(V_\Theta f(b_0(0,z)b_1(z,0)+b_1(0,z)b_0(z,0))\bigr)(0)$. We need

\begin{lem}\label{s4-lII}
For $s=1,\ldots,n$, we have
\begin{equation}\label{s4-eXXXIV}\begin{split}
\rabs{\frac{\pr b_1(z,0)}{\pr z_s}}_{z=0}&=-b_1(0,0)\frac{\pr V_\Theta}{\pr z_s}(0)
+(2\pi)^{-n}\det\dot R^L(0)\Bigr(\frac{1}{4\pi}\frac{\pr\hat r}{\pr z_s}(0)-
\frac{1}{8\pi}\frac{\pr r}{\pr z_s}(0)\Bigr),\\
\rabs{\frac{\pr b_1(0,z)}{\pr\ol z_s}}_{z=0}&=-b_1(0,0)\frac{\pr V_\Theta}{\pr\ol z_s}(0)
+(2\pi)^{-n}\det\dot R^L(0)\Bigr(\frac{1}{4\pi}\frac{\pr\hat r}{\pr\ol z_s}(0)-
\frac{1}{8\pi}\frac{\pr r}{\pr\ol z_s}(0)\Bigr)
\end{split} 
\end{equation} 
and
\begin{equation}\label{s4-eXXXV}\begin{split}
&\rabs{\frac{\pr^{\abs{\alpha}+\abs{\beta}}b_1(z,0)}{\pr z^\alpha\pr\ol z^\beta}}_{z=0}=0,\qquad
\forall\alpha,\beta\in\mathbb N^n_0,\ \ \abs{\beta}>0,\\
&\rabs{\frac{\pr^{\abs{\gamma}+\abs{\delta}}b_1(0,z)}{\pr z^\gamma\pr\ol z^\delta}}_{z=0}=0,\qquad
\forall\gamma,\delta\in\mathbb N^n_0,\ \ \abs{\gamma}>0.
\end{split}
\end{equation}
\end{lem}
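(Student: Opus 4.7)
The plan is to mirror the proof of Lemma~\ref{s4-lI}, using two ingredients: the infinite-order off-diagonal vanishing recorded in \eqref{s3-eXXI} applied to $b_1=b_{1,1}$, and the fact that the diagonal value $b_1(z,z)$ is a globally defined smooth function for which the explicit formula \eqref{s4-eXXXII} holds at every point of $D$.

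For \eqref{s4-eXXXV}, I would invoke \eqref{s3-eXXI} to say that $\ddbar_z b_1(z,w)$ vanishes to infinite order along $\set{z=w}$, so in particular all of its $(z,\ol z,w,\ol w)$-derivatives vanish at $(0,0)$. For multiindices $\alpha,\beta\in\mathbb N^n_0$ with $\abs{\beta}\geq 1$, pick $j$ with $\beta_j\geq 1$ and write
\[\pr^\alpha_z\pr^\beta_{\ol z}b_1(z,0)\big|_{z=0}=\pr^\alpha_z\pr^{\beta-e_j}_{\ol z}\big(\pr_{\ol z_j}b_1(z,w)\big)\big|_{z=0,\,w=0}=0.\]
The corresponding statement for $b_1(0,z)$ is identical, using the vanishing of $\pr_w b_1(z,w)$ at the diagonal instead.

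For \eqref{s4-eXXXIV}, note that \eqref{s4-eXXXII} was derived at an arbitrary base point using special coordinates, yet every object in it is intrinsic: $b_1(z,z)$, $\det\dot R^L$, $\hat r$ and $r$ are all globally defined. Hence
\[b_1(z,z)=(2\pi)^{-n}\det\dot R^L(z)\Big(\frac{1}{4\pi}\hat r(z)-\frac{1}{8\pi}r(z)\Big)\]
holds pointwise on $D$. Applying the chain rule to $z\mapsto b_1(z,z)$ at $z=0$ gives
\[\pr_{z_s}b_1(z,z)\big|_{z=0}=\pr_{z_s}b_1(z,0)\big|_{z=0}+\pr_{w_s}b_1(0,w)\big|_{w=0},\]
and the second summand vanishes by the infinite-order vanishing of $\pr_w b_1$ at $(0,0)$. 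The left-hand side is then computed directly from the diagonal formula using $\pr_{z_s}V_\omega(0)=0$ (from \eqref{s2-eI}, equivalently \eqref{s4-eXXIII}), $V_\Theta(0)=1$, and $V_\omega(0)=(2\pi)^{-n}\det\dot R^L(0)$; the product rule gives
\[\pr_{z_s}\big((2\pi)^{-n}\det\dot R^L(z)\big)\big|_{z=0}=-(2\pi)^{-n}\det\dot R^L(0)\,\pr_{z_s}V_\Theta(0),\]
and combining this with the derivative of the scalar-curvature factor and invoking \eqref{s4-eXXXII} yields exactly the right-hand side of \eqref{s4-eXXXIV}. The formula for $\pr_{\ol z_s}b_1(0,z)|_{z=0}$ follows by the analogous computation, this time using \eqref{s4-eXXXV} to discard the contribution from the first slot in the chain rule.

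The only conceptually new step is the globalization of \eqref{s4-eXXXII}; the remainder is chain-rule bookkeeping with no substantive obstacles.
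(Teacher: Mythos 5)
Your proposal is correct and is essentially the paper's own argument: the infinite-order vanishing of $\ddbar_z b_1(z,w)$ and $\pr_w b_1(z,w)$ at the diagonal (from the Bergman kernel expansion, here with $f=1$) gives \eqref{s4-eXXXV} and kills the second-slot term in the chain rule, and differentiating the globally valid diagonal identity \eqref{s4-eXXXII}, rewritten as $b_1(z,z)=V_\omega(z)V_\Theta(z)^{-1}\bigl(\frac{1}{4\pi}\hat r(z)-\frac{1}{8\pi}r(z)\bigr)$, together with $\pr V_\omega(0)=0$ and $V_\Theta(0)=1$ yields \eqref{s4-eXXXIV} exactly as in the paper. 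The globalization of \eqref{s4-eXXXII} that you flag is indeed the one point the paper leaves implicit, and your justification (all quantities involved are coordinate-independent and the base point was arbitrary) is the right one.
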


\begin{proof}
In view of \eqref{s3-eIII}, we see that
\[\ddbar_z b_1(z,0)=O(\abs{z}^N), \qquad \pr_z b_1(0,z)=O(\abs{z}^N), \qquad \forall N\geq 0.\]
From this observation, \eqref{s4-eXXXV} follows.

From \eqref{s4-eXXXII}, we see that
\begin{equation}\label{s4-eXXXVI}\begin{split}
b_1(z,z)&=(2\pi)^{-n}\det\dot R^L(z)\Bigr(\frac{1}{4\pi}\hat r(z)-\frac{1}{8\pi}r(z)\Bigr)\\
&=V_\omega(z)\bigr(V_\Theta(z)\bigr)^{-1}\Bigr(\frac{1}{4\pi}\hat r(z)-\frac{1}{8\pi}r(z)\Bigr).
\end{split}
\end{equation}
Note that $\frac{\pr V_\omega}{\pr z_s}(0)=\frac{\pr V_\omega}{\pr\ol z_s}(0)=0$,
$s=1,\ldots,n$, $V_\Theta(0)=1$. Thus, for $s=1,\ldots,n$,
\[\begin{split}
\rabs{\frac{\pr b_1(z,z)}{\pr z_s}}_{z=0} &=\rabs{\frac{\pr b_1(z,0)}{\pr z_s}}_{z=0} \qquad
(\text{here we used \eqref{s4-eXXXV}})\\
&=\rabs{\frac{\pr}{\pr z_s}\Bigr(V_\omega(z)\bigr(V_\Theta(z)\bigr)^{-1}\bigr(\frac{1}{4\pi}\hat r(z)-\frac{1}{8\pi}r(z)\bigr)\Bigr)}_{z=0} \\
&=-b_1(0,0)\frac{\pr V_\Theta}{\pr z_s}(0)+(2\pi)^{-n}\det\dot R^L(0)\Bigr(\frac{1}{4\pi}\frac{\pr\hat r}{\pr z_s}(0)
-\frac{1}{8\pi}\frac{\pr r}{\pr z_s}(0)\Bigr).
\end{split}\]
Similarly, for $s=1,\ldots,n$, we have
\[
\rabs{\frac{\pr b_1}{\pr\ol z_s}(0,z)}_{z=0}=-b_1(0,0)\frac{\pr V_\Theta}{\pr\ol z_s}(0)
+(2\pi)^{-n}\det\dot R^L(0)\Bigr(\frac{1}{4\pi}\frac{\pr\hat r}{\pr\ol z_s}(0)
-\frac{1}{8\pi}\frac{\pr r}{\pr\ol z_s}(0)\Bigr).
\]
\eqref{s4-eXXXIV} follows.
\end{proof}

From Lemma~\ref{s4-lI}, Lemma~\ref{s4-lII} and \eqref{s4-eXVI}, it is not difficult to calculate that
\begin{equation}\label{s4-eXXXVII}\begin{split}
&\triangle_0\bigr(V_\Theta f(b_0(0,z)b_1(z,0)+b_1(0,z)b_0(z,0))\bigr)(0)\\
&=2\sum^n_{j=1}\frac{1}{\lambda_j}\Bigr(\frac{\pr^2 V_\Theta}{\pr z_j\pr\ol z_j}(0)
-\abs{\frac{\pr V_\Theta}{\pr z_j}(0)}^2\Bigr)b_0(0,0)b_1(0,0)f\\
&\qquad +2\sum^n_{j=1}\frac{1}{\lambda_j}\frac{\pr^2f}{\pr z_j\pr\ol z_j}(0)b_0(0,0)b_1(0,0)\\
&\qquad+(2\pi)^{-2n}(\det\dot R^L(0))^2\Bigr(\frac{1}{4\pi}\sum^n_{j=1}\frac{1}{\lambda_j}
\bigr(\frac{\pr f}{\pr\ol z_j}(0)\frac{\pr\hat r}{\pr z_j}(0)+\frac{\pr f}{\pr z_j}(0)\frac{\pr\hat r}{\pr\ol z_j}(0)\bigr)\\
&\qquad-\frac{1}{8\pi}\sum^n_{j=1}\frac{1}{\lambda_j}\bigr(\frac{\pr f}{\pr\ol z_j}(0)\frac{\pr r}{\pr z_j}(0)
+\frac{\pr f}{\pr z_j}(0)\frac{\pr r}{\pr\ol z_j}(0)\bigr)\Bigr).
\end{split}
\end{equation}
From \eqref{s2-eII}, we can check that
\begin{equation}\label{s4-eXXXVIII}\begin{split}
&\frac{1}{4\pi}\sum^n_{j=1}\frac{1}{\lambda_j}\bigr(\frac{\pr f}{\pr\ol z_j}(0)\frac{\pr\hat r}{\pr z_j}(0)
+\frac{\pr f}{\pr z_j}(0)\frac{\pr\hat r}{\pr\ol z_j}(0)\bigr)
-\frac{1}{8\pi}\sum^n_{j=1}\frac{1}{\lambda_j}\bigr(\frac{\pr f}{\pr\ol z_j}(0)\frac{\pr r}{\pr z_j}(0)
+\frac{\pr f}{\pr z_j}(0)\frac{\pr r}{\pr\ol z_j}(0)\bigr)\\
&\qquad=\frac{1}{4\pi^2}\langle\,\ddbar f\,,\ddbar\hat r\,\rangle_\omega(0)
+\frac{1}{4\pi^2}\langle\,\pr f\,,\pr\hat r\,\rangle_\omega(0)
-\frac{1}{8\pi^2}\langle\,\ddbar f\,,\ddbar r\,\rangle_\omega(0)
-\frac{1}{8\pi^2}\langle\,\pr f\,,\pr r\,\rangle_\omega(0).
\end{split}
\end{equation}
From \eqref{s2-eX}, \eqref{s2-eXII-I}, \eqref{s4-eXVI}, \eqref{s4-eXXXII}, \eqref{s4-eXXXVIII} and \eqref{s4-eXXXVII}, 
we obtain 
\begin{equation}\label{s4-eXXXIX}\begin{split}
&\triangle_0\bigr(V_\Theta f(b_0(0,z)b_1(z,0)+b_1(0,z)b_0(z,0))\bigr)(0)\\
&=(2\pi)^{-2n}(\det\dot R^L(0))^2\Bigr(-\frac{1}{4\pi^2}\bigr(\hat r\bigr)^2(0)f(0)-\frac{1}{4\pi^2}\hat r(0)\bigr(\triangle_\omega f\bigr)(0)+\frac{1}{8\pi^2}r(0)\hat r(0)f(0)\\
&\quad+\frac{1}{8\pi^2}r(0)\bigr(\triangle_\omega f\bigr)(0)+\frac{1}{4\pi^2}\langle\,\ddbar f\,,\ddbar\hat r\,\rangle_\omega(0)
+\frac{1}{4\pi^2}\langle\,\pr f\,,\pr\hat r\,\rangle_\omega(0)
-\frac{1}{8\pi^2}\langle\,\ddbar f\,,\ddbar r\,\rangle_\omega(0)\\
&\quad-\frac{1}{8\pi^2}\langle\,\pr f\,,\pr r\,\rangle_\omega(0)\Bigr).
\end{split}
\end{equation} 

As \eqref{s4-eXXIX}, \eqref{s4-eXXX}, we can check that 
\begin{equation} \label{s4-eXXXIX-bis}
\begin{split}
&\triangle^2_0\bigr(\phi_1V_\Theta f(b_0(0,z)b_1(z,0)+b_1(0,z)b_0(z,0))\bigr)(0)\\
&\quad=-\frac{1}{\pi}r(0)f(0)b_0(0,0)b_1(0,0)\\
&\quad=(2\pi)^{-2n}(\det\dot R^L(0))^2\Bigr(-\frac{1}{4\pi^2}r(0)\hat r(0)f(0)+\frac{1}{8\pi^2}r^2(0)f(0)\Bigr).
\end{split}
\end{equation}

Now, we compute $\triangle^2_0\bigr(V_\Theta fb_0(0,z)b_0(z,0)\bigr)(0)$. From Lemma~\ref{s4-lI}, \eqref{s4-eXVI} and some straightforward but elementary computations, we can check that 
\begin{equation} \label{s4-eXXXXV}
\begin{split} 
&\triangle^2_0\bigr(V_\Theta fb_0(0,z)b_0(z,0)\bigr)(0)\\
&=(2\pi)^{-2n}(\det\dot R^L(0))^2f(0)\Bigr(\bigr(\triangle^2_0\log V_\Theta\bigr)(0)+\bigr(\triangle_0\log V_\Theta\bigr)^2(0)+\sum^n_{s,t=1}\frac{1}{\lambda_s\lambda_t}\abs{\frac{\pr^2\log V_\Theta}{\pr\ol z_s\pr z_t}(0)}^2\Bigr)
\\
&\quad+(2\pi)^{-2n}(\det\dot R^L(0))^2\Bigr(2\sum^n_{s=1}\frac{1}{\lambda_s}\frac{\pr f}{\pr z_s}(0)\frac{\pr}{\pr\ol z_s}\bigr(\triangle_0\log V_\Theta\bigr)(0)\\
&\quad+2\sum^n_{s=1}\frac{1}{\lambda_s}\frac{\pr f}{\pr\ol z_s}(0)\frac{\pr}{\pr z_s}\bigr(\triangle_0\log V_\Theta\bigr)(0)+2\bigr(\triangle_0f\bigr)(0)\bigr(\triangle_0\log V_\Theta\bigr)(0)\\
&\quad+2\sum^n_{s,t=1}
\frac{1}{\lambda_s\lambda_t}\frac{\pr^2f}{\pr\ol z_s\pr z_t}(0)\frac{\pr^2\log V_\Theta}{\pr z_s\pr\ol z_t}(0)+\bigr(\triangle^2_0f\bigr)(0)\Bigr).
\end{split}
\end{equation} 
Combining \eqref{s4-eXXXXV} with Theorem~\ref{s2-tI} and Corollary~\ref{s2-cI}, we obtain 
\begin{equation} \label{s4-eXXXXVII}
\begin{split}
&\triangle^2_0\bigr(V_\Theta fb_0(0,z)b_0(z,0)\bigr)(0)\\
&=(2\pi)^{-2n}(\det\dot R^L(0))^2f(0)\Bigr(\frac{1}{4\pi^2}\bigr(\triangle_\omega\hat r\bigr)(0)+\frac{1}{\pi^2}\langle\,R^{\det}_\Theta\,,{\rm Ric\,}_\omega\,\rangle_\omega(0)\\
&\quad+\frac{1}{4\pi^2}\bigr(\hat r\bigr)^2(0)+\frac{1}{\pi^2}\abs{R^{\det}_\Theta}^2_\omega(0)\Bigr)\\
&\quad+(2\pi)^{-2n}(\det\dot R^L(0))^2\Bigr(-\frac{1}{\pi^2}\langle\,\ddbar f\,,\ddbar\hat r\,\rangle_\omega(0)
-\frac{1}{\pi^2}\langle\,\pr f\,,\pr\hat r\,\rangle_\omega(0)\\
&\quad+\frac{1}{2\pi^2}\hat r(0)\bigr(\triangle_\omega f\bigr)(0)-\frac{2}{\pi^2}\langle\,\ddbar\pr f\,,R^{\det}_\Theta\,\rangle_\omega(0)\\
&\quad-\frac{1}{\pi^2}\langle\,\ddbar\pr f\,,{\rm Ric\,}_\omega\,\rangle_\omega(0)+\frac{1}{4\pi^2}\bigr(\triangle^2_\omega f\bigr)(0)\Bigr).
\end{split}
\end{equation}

Now, we compute $\triangle^3_0\bigr(\phi_1V_\Theta b_0(0,z)b_0(z,0)\bigr)(0)$. Note that $\phi_1=O(\abs{z}^4)$ and $\frac{\pr^4\phi_1}{\pr z^\alpha\pr\ol z^\beta}(0)=0$ if $\abs{\alpha}\leq1$ or $\abs{\beta}\leq1$, $\alpha, \beta\in\mathbb N^n_0$. From this observation, Lemma~\ref{s4-lI}, \eqref{s4-eXVI} and some very complicate but elementary computations, we can check that 
\begin{equation} \label{s4-eXXXXVIII}
\begin{split}
&\triangle^3_0\bigr(\phi_1V_\Theta fb_0(0,z)b_0(z,0)\bigr)(0)\\
&=(2\pi)^{-2n}(\det\dot R^L(0))^2f(0)\Bigr(\bigr(\triangle^3_0\phi\bigr)(0)+3\bigr(\triangle^2_0\phi\bigr)(0)\bigr(\triangle_0\log V_\Theta\bigr)(0)\\
&\quad+6\sum^n_{j,k,s=1}\frac{1}{\lambda_j\lambda_k\lambda_s}\frac{\pr^4\phi}{\pr\ol z_j\pr z_j\pr\ol z_k\pr z_s}(0)\frac{\pr^2\log V_\Theta}{\pr z_k\pr\ol z_s}(0)\Bigr)\\
&\quad+(2\pi)^{-2n}(\det\dot R^L(0))^2
\Bigr(3\bigr(\triangle^2_0\phi\bigr)(0)\bigr(\triangle_0f\bigr)(0)\\
&\quad+6\sum^n_{j,k,s=1}\frac{1}{\lambda_j\lambda_k\lambda_s}\frac{\pr^4\phi}{\pr\ol z_j\pr z_j\pr\ol z_k\pr z_s}(0)\frac{\pr^2f}{\pr z_k\pr\ol z_s}(0)\\
&\quad+3\sum^n_{j,k,s=1}\frac{1}{\lambda_j\lambda_k\lambda_s}\frac{\pr^5\phi}{\pr\ol z_j\pr z_j\pr\ol z_k\pr z_k\pr\ol z_s}(0)\frac{\pr f}{\pr z_s}(0)\\
&\quad+3\sum^n_{j,k,s=1}\frac{1}{\lambda_j\lambda_k\lambda_s}\frac{\pr^5\phi}{\pr\ol z_j\pr z_j\pr\ol z_k\pr z_k\pr z_s}(0)\frac{\pr f}{\pr\ol z_s}(0)\Bigr).
\end{split}
\end{equation}
Combining \eqref{s4-eXXXXVIII} with Theorem~\ref{s2-tI} and Corollary~\ref{s2-cI}, we obtain
\begin{equation} \label{s4-eXXXXX}
\begin{split}
&\triangle^3_0\bigr(\phi_1V_\Theta fb_0(0,z)b_0(z,0)\bigr)(0)\\
&=(2\pi)^{-2n}(\det\dot R^L(0))^2f(0)\Bigr(\frac{1}{4\pi^2}\bigr(\triangle_\omega r\bigr)(0)+\frac{2}{\pi^2}\abs{{\rm Ric\,}_\omega}^2_\omega(0)+\frac{1}{\pi^2}\abs{R^{TX}_\omega}^2_\omega(0)\\
&\quad+\frac{3}{4\pi^2}r(0)\hat r(0)+\frac{6}{\pi^2}\langle\,{\rm Ric\,}_\omega\,,R^{\det}_\Theta\,\rangle_\omega(0)\Bigr)\\
&\quad+(2\pi)^{-2n}(\det\dot R^L(0))^2
\Bigr(\frac{3}{4\pi^2}r(0)\bigr(\triangle_\omega f\bigr)(0)-\frac{6}{\pi^2}\langle\,\ddbar\pr f\,,{\rm Ric\,}_\omega\,\rangle_\omega(0)\\
&\quad-\frac{3}{2\pi^2}\langle\,\ddbar f\,,\ddbar r\,\rangle_\omega(0)-\frac{3}{2\pi^2}\langle\,\pr f\,,\pr r\,\rangle_\omega(0)\Bigr).
\end{split}
\end{equation} 

Now, we compute $\triangle^4_0\bigr(\phi^2_1V_\Theta fb_0(0,z)b_0(z,0)\bigr)(0)$. It is straightforward to calculate that 
\begin{equation} \label{s4-eXXXXXI}
\begin{split}
&\triangle^4_0\bigr(\phi^2_1V_\Theta fb_0(0,z)b_0(z,0)\bigr)(0)=f(0)b_0(0,0)^2\bigr(\triangle^4_0\phi_1^2\bigr)(0)\\
&=(2\pi)^{-2n}(\det\dot R^L(0))^2f(0)\Bigr(24\sum_{1\leq s,t,j,k\leq n}\frac{1}{\lambda_t\lambda_s\lambda_j\lambda_k}
\frac{\pr^4\phi}{\pr\ol z_s\pr z_s\pr\ol z_t\pr z_k}(0)\frac{\pr^4\phi}{\pr\ol z_j\pr z_j\pr z_t\pr\ol z_k}(0)\\
&\quad+6\sum_{1\leq s,t,j,k\leq n}\frac{1}{\lambda_t\lambda_s\lambda_j\lambda_k}
\abs{\frac{\pr^4\phi}{\pr\ol z_j\pr z_s\pr\ol z_t\pr z_k}(0)}^2\\
&\quad+6\sum_{1\leq s,t,j,k\leq n}\frac{1}{\lambda_t\lambda_s\lambda_j\lambda_k}
\frac{\pr^4\phi}{\pr\ol z_t\pr z_t\pr\ol z_s\pr z_s}(0)\frac{\pr^4\phi}{\pr\ol z_k\pr z_k\pr\ol z_j\pr z_j}(0)\Bigr).
\end{split}
\end{equation}
Combining \eqref{s4-eXXXXXI} with Theorem~\ref{s2-tI} and Corollary~\ref{s2-cI}, we get
\begin{equation} \label{s4-eXXXXXIII}
\begin{split}
&\triangle^4_0\bigr(\phi^2_1V_\Theta fb_0(0,z)b_0(z,0)\bigr)(0)\\
&=(2\pi)^{-2n}(\det\dot R^L(0))^2f(0)\Bigr(\frac{24}{\pi^2}\abs{{\rm Ric\,}_\omega}^2_\omega(0)+\frac{6}{\pi^2}\abs{R^{TX}_\omega}^2_\omega(0)+\frac{3}{2\pi^2}r^2(0)\Bigr).
\end{split}
\end{equation}

From \eqref{s4-eXV}, \eqref{s4-eXVI}, \eqref{s4-eXXXII}, \eqref{s4-eXXXIX}, \eqref{s4-eXXXIX-bis}, \eqref{s4-eXXXXVII}, \eqref{s4-eXXXXX}, \eqref{s4-eXXXXXIII} and some elementary computations, we get 
\begin{equation} \label{s4-eXXXXXIV} 
\begin{split} 
b_{2,f}(0)&=2f(0)b_2(0,0)+(2\pi)^{-n}f(0)\det\dot R^L(0)\Bigr(-\frac{1}{32\pi^2}\bigr(\hat r\bigr)^2(0)+\frac{1}{32\pi^2}\hat r(0)r(0)\\
&\quad-\frac{1}{128\pi^2}r^2(0)+\frac{1}{32\pi^2}\bigr(\triangle_\omega\hat r\bigr)(0)-\frac{1}{8\pi^2}\langle\,{\rm Ric\,}_\omega\,,R^{\det}_\Theta\,\rangle_\omega(0)+\frac{1}{8\pi^2}\abs{R^{\det}_\Theta}^2_\omega(0)\\
&\quad-\frac{1}{96\pi^2}\bigr(\triangle_\omega r\bigr)(0)+\frac{1}{24\pi^2}\abs{{\rm Ric\,}_\omega}^2_\omega(0)-\frac{1}{96\pi^2}\abs{R^{TX}_\omega}^2_\omega(0)\Bigr)\\
&\quad+(2\pi)^{-n}\det\dot R^L(0)\Bigr(-\frac{1}{16\pi^2}\hat r(0)\bigr(\triangle_\omega f\bigr)(0)+\frac{1}{32\pi^2}r(0)\bigr(\triangle_\omega f\bigr)(0)\\
&\quad-\frac{1}{4\pi^2}\langle\,\ddbar\pr f\,,R^{\det}_\Theta\,\rangle_\omega(0)+\frac{1}{8\pi^2}\langle\,\ddbar\pr f\,,{\rm Ric\,}_\omega\,\rangle_\omega(0)+\frac{1}{32\pi^2}\bigr(\triangle^2_\omega f\bigr)(0)\Bigr).
\end{split}
\end{equation}
Let $f=1$ in \eqref{s4-eXXXXXIV} and notice that when $f=1$, $b_{2,f}(0)=b_2(0,0)$, we deduce
\begin{equation}\label{s4-eXXXXXV} 
\begin{split}
b_2(0,0)&=-(2\pi)^{-n}\det\dot R^L(0)\Bigr(-\frac{1}{32\pi^2}\bigr(\hat r\bigr)^2(0)+\frac{1}{32\pi^2}\hat r(0)r(0)
-\frac{1}{128\pi^2}r^2(0)\\
&\quad+\frac{1}{32\pi^2}\bigr(\triangle_\omega\hat r\bigr)(0)-\frac{1}{8\pi^2}\langle\,{\rm Ric\,}_\omega\,,R^{\det}_\Theta\,\rangle_\omega(0)+\frac{1}{8\pi^2}\abs{R^{\det}_\Theta}^2_\omega(0)\\
&\quad-\frac{1}{96\pi^2}\bigr(\triangle_\omega r\bigr)(0)+\frac{1}{24\pi^2}\abs{{\rm Ric\,}_\omega}^2_\omega(0)-\frac{1}{96\pi^2}\abs{R^{TX}_\omega}^2_\omega(0)\Bigr). 
\end{split}
\end{equation}
Combining \eqref{s4-eXXXXXV} with \eqref{s4-eXXXXXIV}, we obtain
\begin{equation}\label{s4-eXXXXXVI} 
\begin{split}
b_{2,f}(0)&=(2\pi)^{-n}f(0)\det\dot R^L(0)\Bigr(\frac{1}{32\pi^2}\bigr(\hat r\bigr)^2(0)-\frac{1}{32\pi^2}\hat r(0)r(0)\\
&\quad+\frac{1}{128\pi^2}r^2(0)-\frac{1}{32\pi^2}\bigr(\triangle_\omega\hat r\bigr)(0)+\frac{1}{8\pi^2}\langle\,{\rm Ric\,}_\omega\,,R^{\det}_\Theta\,\rangle_\omega(0)-\frac{1}{8\pi^2}\abs{R^{\det}_\Theta}^2_\omega(0)\\
&\quad+\frac{1}{96\pi^2}\bigr(\triangle_\omega r\bigr)(0)-\frac{1}{24\pi^2}\abs{{\rm Ric\,}_\omega}^2_\omega(0)+\frac{1}{96\pi^2}\abs{R^{TX}_\omega}^2_\omega(0)\Bigr)\\
&\quad+(2\pi)^{-n}\det\dot R^L(0)\Bigr(-\frac{1}{16\pi^2}\hat r(0)\bigr(\triangle_\omega f\bigr)(0)+\frac{1}{32\pi^2}r(0)\bigr(\triangle_\omega f\bigr)(0)\\
&\quad-\frac{1}{4\pi^2}\langle\,\ddbar\pr f\,,R^{\det}_\Theta\,\rangle_\omega(0)+\frac{1}{8\pi^2}\langle\,\ddbar\pr f\,,{\rm Ric\,}_\omega\,\rangle_\omega(0)+\frac{1}{32\pi^2}\bigr(\triangle^2_\omega f\bigr)(0)\Bigr).
\end{split}
\end{equation}
From \eqref{s4-eXXXXXVI}, we get \eqref{s1-e16-III}. Theorem~\ref{s1-tmain} follows.

\section{The coefficients of the asymptotic expansion of the kernel of the composition of two Berezin-Toeplitz quantizations} 

Let $f, g\in C^\infty(X)$ and let $b_{j,f,g}$, $C_j(f,g)$, $j=0,1,\ldots$, be as in \eqref{e-compI} and \eqref{e-compII}
respectively.
Fix a point $p\in X$. In this section, 
we will give a method for computing $b_{j,f,g}(p)$, $C_j(f,g)$, $j=0,1,\ldots$, and we will compute the first three terms explicitly. Near $p$, we take local holomorphic coordinates $z=(z_1,\ldots,z_n)$, $z_j=x_{2j-1}+ix_{2j}$, $j=1,\ldots,n$, and local section $s$ defined in some small neighborhood $D$ of $p$ such that \eqref{s2-eI} holds. Until further notice, we work 
with this local coordinates $z$ and we identify $p$ with the point $x=z=0$. 

In view of Corollary~\ref{s3-cII}, we see that 
\begin{equation} \label{s5-eI}
(T^{(k)}_f\circ T^{(k)}_g)(0)=\int_De^{ik(\Psi(0,z)+\Psi(z,0))}b_f(0,z,k)b_g(z,0,k)f(z)V_\Theta(z)d\lambda(z)+r_k,
\end{equation}
where $d\lambda(z)=2^ndx_1dx_2\cdots dx_{2n}$, $V_\Theta$ is given by \eqref{s1-e10} and 
\[\lim_{k\to\infty}\frac{r_k}{k^N}=0,\ \ \forall N\geq0.\] 
We notice that since $b_f(z,w,k)$, $b_g(w,z,k)$ are properly supported, we have 
\begin{equation} \label{s5-eII}
b_f(0,z,k)\in C^\infty_0(D),\ \ b_g(z,0,k)\in C^\infty_0(D).
\end{equation}
We can repeat the proof of Theorem~\ref{s4-tII} and conclude that 

\begin{thm} \label{s5-tII} 
For $b_{j,f,g}$, $j=0,1,\ldots$, in \eqref{e-compI}, we have 
\begin{equation} \label{s5-eXII}
\begin{split}
&b_{j,f,g}(0)\\
&=(2\pi)^n(\det\dot{R}^L(0))^{-1}\sum_{0\leq m\leq j}\sum_{\nu-\mu=m}\sum_{2\nu\geq 4\mu}\sum_{s+t=j-m}(-1)^\mu2^{-m}
\frac{\triangle_0^\nu\bigr(\phi_1^\mu V_\Theta b_{s,f}(0,z)b_{t,g}(z,0)\bigr)(0)}{\nu!\mu!},
\end{split}
\end{equation} 
for all $j=0,1,\ldots$, where $b_{j,f}(0,z), b_{j,g}(z,0)$, $j=0,1,\ldots$, are as in \eqref{s3-eXXI}.

In particular, 
\begin{equation} \label{s5-eXIII}
b_{0,f,g}(0)=(2\pi)^n(\det\dot{R}^L(0))^{-1}b_{0,f}(0,0)b_{0,g}(0,0),
\end{equation}
\begin{equation} \label{s5-eXIV}
\begin{split}
b_{1,f,g}(0)&=(2\pi)^n(\det\dot{R}^L(0))^{-1}\Bigr(b_{0,f}(0,0)b_{1,g}(0,0)+b_{1,f}(0,0)b_{0,g}(0,0)\\
&\quad+\frac{1}{2}\triangle_0\bigr(V_\Theta b_{0,f}(0,z)b_{0,g}(z,0)\bigr)(0)
-\frac{1}{4}\triangle^2_0\bigr(\phi_1V_\Theta b_{0,f}(0,z)b_{0,g}(z,0)\bigr)(0)\Bigr)
\end{split}
\end{equation}
and 
\begin{equation} \label{s5-eXV}
\begin{split}
b_{2,f,g}(0)&=(2\pi)^n(\det\dot{R}^L(0))^{-1}\Bigr(b_{0,f}(0,0)b_{2,g}(0,0)+b_{2,f}(0,0)b_{0,g}(0,0)+b_{1,f}(0,0)b_{1,g}(0,0)\\
&\quad+\frac{1}{2}\triangle_0\bigr(V_\Theta (b_{0,f}(0,z)b_{1,g}(z,0)+b_{1,f}(0,z)b_{0,g}(z,0))\bigr)(0)\\
&\quad-\frac{1}{4}\triangle^2_0\bigr(\phi_1V_\Theta (b_{0,f}(0,z)b_{1,g}(z,0)+b_{1,f}(0,z)b_{0,g}(z,0))\bigr)(0)\\
&\quad+\frac{1}{8}\triangle^2_0\bigr(V_\Theta b_{0,f}(0,z)b_{0,g}(z,0)\bigr)(0)
-\frac{1}{24}\triangle^3_0\bigr(\phi_1V_\Theta b_{0,f}(0,z)b_{0,g}(z,0)\bigr)(0)\\
&\quad+\frac{1}{192}\triangle^4_0\bigr(\phi_1^2V_\Theta b_{0,f}(0,z)b_{0,g}(z,0)\bigr)(0)\Bigr).
\end{split}
\end{equation}
\end{thm}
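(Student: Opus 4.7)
The proof plan is to mimic, essentially line for line, the derivation of Theorem~\ref{s4-tII}, replacing the single-operator integrand with the composition integrand from Corollary~\ref{s3-cII}. Starting from \eqref{s5-eI}, the phase appearing in the oscillatory integral is $F(z) = \Psi(0,z) + \Psi(z,0)$, which is the very same phase analyzed in Section~4. Consequently every geometric quantity extracted from the phase carries over without change: the critical point is at $z=0$, the Hessian factor is $\det\bigl(kF''(0)/(2\pi i)\bigr)^{-1/2} = k^{-n}\pi^n\bigl(\det\dot R^L(0)\bigr)^{-1}$, the quadratic form in the Melin--Sj\"ostrand/H\"ormander expansion is $\langle F''(0)^{-1}D, D\rangle = i\triangle_0$, and the remainder after subtracting the quadratic part of the phase is $h(z) = 2i\phi_1(z) + O(|z|^N)$ for every $N$. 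Only the amplitude changes, from $f(z)\, b(0,z,k)\, b(z,0,k)\, V_\Theta(z)$ to $b_f(0,z,k)\, b_g(z,0,k)\, V_\Theta(z)$.

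First I would confirm that Theorem~\ref{s4-tI} applies. Since $b_f(z,w,k)$ and $b_g(z,w,k)$ are properly supported by Theorem~\ref{s3-tII}, the slices $b_f(0,\,\cdot\,,k)$ and $b_g(\,\cdot\,,0,k)$ each lie in $C^\infty_0(D)$, so the integrand in \eqref{s5-eI} is compactly supported in $D$. Next, I would substitute the new amplitude into formula \eqref{s4-eIV} for $L_j$, obtaining the analogue of \eqref{s4-eIX},
\[
L_j\bigl(b_f(0,z,k)\, b_g(z,0,k)\bigr) = \sum_{\nu-\mu=j}\sum_{2\nu\geq 4\mu}(-1)^\mu 2^{-j}\frac{\triangle_0^\nu\bigl(\phi_1^\mu V_\Theta\, b_f(0,z,k)\, b_g(z,0,k)\bigr)(0)}{\nu!\,\mu!}.
\]
Inserting the asymptotic expansions $b_f(0,z,k) \equiv \sum_s k^{n-s} b_{s,f}(0,z)$ and $b_g(z,0,k) \equiv \sum_t k^{n-t} b_{t,g}(z,0)$ from \eqref{s3-eXXI}, expanding the product, and collecting terms by powers of $k$, yields the exact analogue of \eqref{s4-eXI}.

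Matching coefficients with the expansion \eqref{e-compI} then gives \eqref{s5-eXII} directly, and the cases $j=0,1,2$ (equations \eqref{s5-eXIII}--\eqref{s5-eXV}) are read off by explicitly enumerating the triples $(\nu,\mu,s,t)$ with $\nu-\mu=m$, $s+t=j-m$, $0\leq m\leq j$, and $2\nu\geq 4\mu$. The constraint $2\nu\geq 4\mu$ forces $\mu\in\{0,1\}$ for $j\leq 2$, which keeps the enumeration short. The only ``obstacle'' is bookkeeping: one must keep straight which kernel ($b_{s,f}$ versus $b_{t,g}$) multiplies which factor and record the combinatorial weights correctly. No new analytic or geometric input beyond Theorem~\ref{s4-tII} is required, so the entire argument amounts to substituting the composition amplitude into the stationary-phase machinery already built in Section~4.
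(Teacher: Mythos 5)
Your proposal is correct and coincides with the paper's own argument: the paper simply notes that one can repeat the proof of Theorem~\ref{s4-tII} verbatim, starting from the oscillatory integral of Corollary~\ref{s3-cII} with the unchanged phase $\Psi(0,z)+\Psi(z,0)$ and the new amplitude $b_f(0,z,k)\,b_g(z,0,k)\,V_\Theta(z)$, and then applying the H\"ormander stationary phase formula and collecting powers of $k$. (You even correctly omit the spurious factor $f(z)$ that appears in the paper's display \eqref{s5-eI}, which is a typo carried over from \eqref{s4-eI} and is inconsistent with Corollary~\ref{s3-cII} and with the final formula \eqref{s5-eXII}.)
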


\subsection{The coefficients $b_{0,f,g}$ and $C_0(f,g)$} 

From \eqref{s5-eXIII}, \eqref{s4-eXVII} and \eqref{e-compII}, we obtain 

\begin{equation} \label{s5-compI}
\begin{split}
&b_{0,f,g}(0)=(2\pi)^{-n}f(0)g(0)\det\dot{R}^L(0),\\
&C_{0,f,g}(0)=f(0)g(0).
\end{split}
\end{equation}
From \eqref{s5-compI}, we get \eqref{e-compmainI} and \eqref{e-compmainIV}.

\subsection{The coefficients $b_{1,f,g}$ and $C_1(f,g)$} 

In view of \eqref{s5-eXIV}, we see that to compute $b_{1,f,g}(0)$ we have to know which global geometric functions
at $z=0$ equal to $\triangle_0\bigr(V_\Theta b_{0,f}(0,z)b_{0,g}(z,0)\bigr)(0)$ and $\triangle_0^2\bigr(\phi_1V_\Theta b_{0,f}(0,z)b_{0,g}(z,0)\bigr)(0)$.
Now we compute $\triangle_0\bigr(V_\Theta b_{0,f}(0,z)b_{0,g}(z,0)\bigr)(0)$. First, we need

\begin{lem}\label{s5-lI}
We have
\begin{equation} \label{s5-compII}
\begin{split}
&b_{0,g}(z,0)=b_0(z,0)\sum_{\alpha\in\mathbb N^n_0,\abs{\alpha}\leq N}\frac{\pr^{\abs{\alpha}}g}{\pr z^\alpha}(0)\frac{z^\alpha}{\alpha!}+O(\abs{z}^{N+1}),\ \ \forall N\in\mathbb N_0,\\
&b_{0,f}(0,z)=b_0(0,z)\sum_{\alpha\in\mathbb N^n_0,\abs{\alpha}\leq N}\frac{\pr^{\abs{\alpha}}f}{\pr\ol z^\alpha}(0)\frac{\ol z^\alpha}{\alpha!}+O(\abs{z}^{N+1}),\ \ \forall N\in\mathbb N_0.
\end{split}
\end{equation}

\end{lem}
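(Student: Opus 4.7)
The plan is to avoid any further stationary-phase analysis and instead match Taylor coefficients at $z=0$ directly, using only the infinite-order vanishing properties from Theorems~\ref{s3-tI} and~\ref{s3-tII}: both $\bar\partial_z b_{0,g}(z,w)$, $\partial_w b_{0,g}(z,w)$ and their analogues for $b_0$ vanish to infinite order on the diagonal $\{z=w\}$. I will establish the first identity; the second is obtained by the mirror argument, exchanging the roles of the two arguments of $b_{0,f}$.

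The first step converts off-diagonal Taylor data at $(z,0)$ into on-diagonal data at $(z,z)$. By the chain rule
\[
\partial^\alpha_z b_{0,g}(z,z)=\sum_{\alpha_1+\alpha_2=\alpha}\binom{\alpha}{\alpha_1,\alpha_2}\partial^{\alpha_1}_z\partial^{\alpha_2}_w b_{0,g}(z,w)\big|_{w=z};
\]
for $|\alpha_2|\geq 1$ one can split off a factor of $\partial_w$, and the flatness of $\partial_w b_{0,g}$ on the diagonal makes its subsequent derivatives in $z,\bar z,w,\bar w$ all vanish at the diagonal point $(0,0)$. Hence $\partial^\alpha_z b_{0,g}(z,0)\big|_{z=0}=\partial^\alpha_z b_{0,g}(z,z)\big|_{z=0}$, and the same identity holds with $b_{0,g}$ replaced by $b_0$. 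By \eqref{s1-e16-I} the diagonal value is $b_{0,g}(z,z)=g(z)\,(2\pi)^{-n}\det\dot R^L(z)$ and the $g\equiv 1$ case reads $b_0(z,z)=(2\pi)^{-n}\det\dot R^L(z)$, so $b_{0,g}(z,z)=g(z)\,b_0(z,z)$. Leibniz then yields
\[
\partial^\alpha_z b_{0,g}(z,0)\big|_{z=0}=\sum_{\beta+\gamma=\alpha}\binom{\alpha}{\beta,\gamma}\frac{\partial^{|\beta|}g}{\partial z^\beta}(0)\,\partial^\gamma_z b_0(z,0)\big|_{z=0},
\]
which is precisely the $z^\alpha$-Taylor coefficient of $b_0(z,0)\sum_{|\gamma|\leq N}\frac{\partial^{|\gamma|}g}{\partial z^\gamma}(0)\frac{z^\gamma}{\gamma!}$ at $z=0$.

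The second step is to observe that the Taylor expansion of $b_{0,g}(z,0)$ at $z=0$ contains only pure holomorphic monomials: for $|\beta|\geq 1$ the flatness of $\bar\partial_z b_{0,g}$ at the diagonal point $(0,0)$ gives $\partial^\alpha_z\partial^\beta_{\bar z}b_{0,g}(z,0)\big|_{z=0}=0$, and the same holds for $b_0(z,0)$. Consequently both $b_{0,g}(z,0)$ and $b_0(z,0)\sum_{|\gamma|\leq N}\frac{\partial^{|\gamma|}g}{\partial z^\gamma}(0)\frac{z^\gamma}{\gamma!}$ are equal, modulo $O(|z|^{N+1})$, to pure holomorphic polynomials in $z$ of degree $\leq N$, and the previous step shows that these two polynomials coincide; this proves the first identity. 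The second identity is produced by the mirror procedure, in which $\bar\partial_z b_{0,f}$ now collapses the chain rule sum while $\partial_w b_{0,f}$ forces only pure anti-holomorphic monomials $\bar z^\alpha$ to appear in the Taylor expansion. The only point deserving care is the legitimacy of dropping the $|\alpha_2|\geq 1$ terms in the chain rule; this is immediate because flatness of $\partial_w b_{0,g}$ on the entire diagonal means every further differentiation in $z,\bar z,w,\bar w$ produces a function that is still flat there.
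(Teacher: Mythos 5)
Your proposal is correct and follows essentially the same route as the paper's own proof: you use the infinite-order vanishing of $\ddbar_z b_{0,g}(z,w)$ and $\pr_w b_{0,g}(z,w)$ on the diagonal to identify the $z$-Taylor coefficients of $b_{0,g}(z,0)$ at $0$ with those of the diagonal restriction $b_{0,g}(z,z)=g(z)b_0(z,z)$, and then to rule out any mixed (anti-holomorphic) monomials, exactly as in \eqref{s5-compIV} and \eqref{s5-compIV-I}. The only difference is cosmetic: you spell out the chain-rule justification for discarding the $\pr_w$ terms, which the paper leaves implicit.
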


\begin{proof}
In view of \eqref{s3-eIII} and \eqref{s3-eXXI}, we see that 
\begin{equation} \label{s5-compIII}
\begin{split}
&\pr_zb_{0,g}(0,z)=O(\abs{z}^N),\ \ \pr_zb_0(0,z)=O(\abs{z}^N),\ \ \forall N\geq0,\\
&\ddbar_zb_{0,g}(z,0)=O(\abs{z}^N),\ \ \ddbar_zb_0(z,0)=O(\abs{z}^N),\ \ \forall N\geq0.
\end{split}
\end{equation} 
Combining \eqref{s5-compIII} with \eqref{s4-eXVII}, we obtain that $\forall \alpha, \beta\in\mathbb N^n_0$, $\abs{\beta}>0$, we have
\begin{equation}\label{s5-compIV}
\begin{split}
\rabs{\frac{\pr^{\abs{\alpha}} b_{0,g}(z,0)}{\pr z^\alpha}}_{z=0}=\rabs{\frac{\pr^{\abs{\alpha}} b_{0,g}(z,z)}{\pr z^\alpha}}_{z=0}
=\rabs{\frac{\pr^{\abs{\alpha}}(b_0(z,z)g(z))}{\pr z^\alpha}}_{z=0}=\rabs{\frac{\pr^{\abs{\alpha}}(b_0(z,0)g(z))}{\pr z^\alpha}}_{z=0}
\end{split}
\end{equation}
and 
\begin{equation}\label{s5-compIV-I}
\rabs{\frac{\pr^{\abs{\alpha}+\abs{\beta}} b_0(z,0)}{\pr z^\alpha\pr\ol z^\beta}}_{z=0}=\rabs{\frac{\pr^{\abs{\alpha}+\abs{\beta}} b_{0,g}(z,0)}{\pr z^\alpha\pr\ol z^\beta}}_{z=0}=0.
\end{equation}
From \eqref{s5-compIV} and \eqref{s5-compIV-I}, we obtain
\[b_{0,g}(z,0)=b_0(z,0)\sum_{\alpha\in\mathbb N^n_0,\abs{\alpha}\leq N}\frac{\pr^{\abs{\alpha}}g}{\pr z^\alpha}(0)\frac{z^\alpha}{\alpha!}+O(\abs{z}^{N+1}),\ \ \forall N\in\mathbb N_0.\] 
Similarly, we have
\[b_{0,f}(0,z)=b_0(0,z)\sum_{\alpha\in\mathbb N^n_0,\abs{\alpha}\leq N}\frac{\pr^{\abs{\alpha}}f}{\pr\ol z^\alpha}(0)\frac{\ol z^\alpha}{\alpha!}+O(\abs{z}^{N+1}),\ \ \forall N\in\mathbb N_0.\]
\eqref{s5-compII} follows. 
\end{proof}

Let $F(z)$ be a smooth function defined in some neighborhood of $z=0$ such that 
\begin{equation}\label{s5-compV}
F(z)=\sum_{\alpha,\beta\in\mathbb N^{n}_0,\abs{\alpha}+\abs{\beta}\leq N}
\frac{\pr^{\abs{\alpha}}f}{\pr\ol z^\alpha}(0)\frac{\pr^{\abs{\beta}}g}{\pr z^\beta}(0)\frac{\ol z^\alpha}{\alpha!}\frac{ z^\beta}{\beta!}+O(\abs{z}^N),\ \ \forall N\in\mathbb N_0.
\end{equation} 
From \eqref{s5-compII}, we see that 
\[\triangle_0\bigr(V_\Theta b_{0,f}(0,z)b_{0,g}(z,0)\bigr)(0)=\triangle_0\bigr(V_\Theta Fb_0(0,z)b_0(z,0)\bigr)(0).\]
Combining this with \eqref{s4-eXXVIII}, we obtain 
\begin{equation}\label{s5-compVI}
\triangle_0\bigr(V_\Theta b_{0,f}(0,z)b_{0,g}(z,0)\bigr)(0)=\Bigr(-\frac{1}{2\pi}\hat r(0)F(0)-\frac{1}{2\pi}\bigr(\triangle_\omega F\bigr)(0)\Bigr)
(2\pi)^{-2n}(\det\dot R^L(0))^2.
\end{equation} 
From \eqref{s5-compV}, \eqref{s2-eII} and \eqref{s2-eIV}, it is easy to check that 
\begin{equation}\label{s5-compVII}
\begin{split}
&F(0)=f(0)g(0),\\
&(\triangle_\omega F)(0)=-2\langle\,\ddbar f\,,\ddbar\ol g\,\rangle_\omega(0).
\end{split}
\end{equation}
From \eqref{s5-compVII} and \eqref{s5-compVI}, we get
\begin{equation}\label{s5-compVIII}
\begin{split}
&\triangle_0\bigr(V_\Theta b_{0,f}(0,z)b_{0,g}(z,0)\bigr)(0)\\
&=\Bigr(-\frac{1}{2\pi}\hat r(0)f(0)g(0)+\frac{1}{\pi}\langle\,\ddbar f\,,\ddbar\ol g\,\rangle_\omega(0)\Bigr)
(2\pi)^{-2n}(\det\dot R^L(0))^2.
\end{split}
\end{equation} 

As \eqref{s4-eXXIX}, \eqref{s4-eXXX}, we can check that 
\begin{equation}\label{s5-compIX}
\begin{split}
\triangle^2_0\bigr(\phi_1V_\Theta b_{0,f}(0,z)b_{0,g}(z,0)\bigr)(0)
&=\triangle^2_0\bigr(\phi_1V_\Theta Fb_0(0,z)b_0(z,0)\bigr)(0)\\
&=-\frac{1}{2\pi}r(0)f(0)g(0)(2\pi)^{-2n}(\det\dot R^L(0))^2.
\end{split}
\end{equation} 

From \eqref{s5-eXIV}, \eqref{s4-eXVII}, \eqref{s4-eXXXIII}, \eqref{s5-compVIII} and \eqref{s5-compIX}, it is straightforward 
to check that
\begin{equation}\label{s5-ecompX} \begin{split}
&b_{1,f,g}(0)\\
&=(2\pi)^{-n}\det\dot R^L(0)f(0)g(0)\Bigr(\frac{1}{4\pi}\hat r(0)-\frac{1}{8\pi}r(0)\Bigr)\\
&\quad+(2\pi)^{-n}\det\dot R^L(0)\Bigr(-\frac{1}{4\pi}\bigr(g\triangle_{\omega}f+f\triangle_{\omega}g\bigr)+\frac{1}{2\pi}\langle\,\ddbar f\,,\ddbar\ol g\,\rangle\,_\omega\Bigr)(0).
\end{split}
\end{equation}

From \eqref{s4-eXXXIII}, we know that 
\begin{equation}\label{s5-ecompXI}
\begin{split}
&b_{1,fg}(0)\\
&=(2\pi)^{-n}\det\dot R^L(0)f(0)g(0)\Bigr(\frac{1}{4\pi}\hat r(0)-\frac{1}{8\pi}r(0)\Bigr)\\
&\quad+(2\pi)^{-n}\det\dot R^L(0)\Bigr(-\frac{1}{4\pi}\triangle_\omega(fg)\Bigr)(0).
\end{split}
\end{equation}
Combining this with \eqref{s2-ecompI}, we deduce 
\begin{equation} \label{s5-ecompXII}
\begin{split}
&b_{1,fg}(0)\\
&=(2\pi)^{-n}\det\dot R^L(0)f(0)g(0)\Bigr(\frac{1}{4\pi}\hat r(0)-\frac{1}{8\pi}r(0)\Bigr)\\
&\quad+(2\pi)^{-n}\det\dot R^L(0)\Bigr(-\frac{1}{4\pi}\bigr(g\triangle_{\omega}f+f\triangle_{\omega}g\bigr)+\frac{1}{2\pi}\langle\,\ddbar f\,,\ddbar\ol g\,\rangle\,_\omega+\frac{1}{2\pi}\langle\,\pr f\,,\pr\ol g\,\rangle\,_\omega\Bigr)(0).
\end{split}
\end{equation} 
From \eqref{s5-ecompXII} and \eqref{s5-ecompX}, we conclude that 
\begin{equation}\label{s5-ecompXIII}
b_{1,f,g}(0)=b_{1,fg}(0)+(2\pi)^{-n}\det\dot R^L(0)\Bigr(-\frac{1}{2\pi}\langle\,\pr f\,,\pr\ol g\,\rangle\,_\omega\Bigr)(0).
\end{equation}
From \eqref{s5-ecompX} and \eqref{s5-ecompXIII}, \eqref{e-compmainII} follows.

Note that
\[b_{1,f,g}(0)=b_{1,fg}(0)+b_{0,C_1(f,g)}(0).\]
From this observation and \eqref{s5-ecompXIII}, \eqref{s4-eXVII}, we conclude that 
\begin{equation}\label{s5-ecompXIV}
C_1(f,g)(0)=-\frac{1}{2\pi}\langle\,\pr f\,,\pr\ol g\,\rangle\,_\omega(0).
\end{equation}
\eqref{e-compmainV} follows. 

\subsection{The coefficients $b_{2,f,g}$ and $C_2(f,g)$}

To know $b_{2,f,g}(0)$, we have to compute
\[\begin{split}
&\triangle_0\bigr(V_\Theta(b_{0,f}(0,z)b_{1,g}(z,0)+b_{1,f}(0,z)b_{0,g}(z,0))\bigr)(0),\\
&\triangle^2_0\bigr(\phi_1V_\Theta(b_{0,f}(0,z)b_{1,g}(z,0)+b_{1,f}(0,z)b_{0,g}(z,0))\bigr)(0),\\
&\triangle^2_0\bigr(V_\Theta b_{0,f}(0,z)b_{0,g}(z,0)\bigr)(0),\ \ \triangle^3_0\bigr(\phi_1V_\Theta b_{0,f}(0,z)b_{0,g}(z,0)\bigr)(0),\\
&\triangle^4_0\bigr(\phi^2_1V_\Theta b_{0,f}(0,z)b_{0,g}(z,0)\bigr)(0).
\end{split}\]
(See \eqref{s5-eXV}.) First, we compute $\triangle_0\bigr(V_\Theta (b_{0,f}(0,z)b_{1,g}(z,0)+b_{1,f}(0,z)b_{0,g}(z,0))\bigr)(0)$. We need 

\begin{lem}\label{s5-lII}
For $s=1,\ldots,n$, we have
\begin{equation}\label{s5-eXXXIV}\begin{split}
\rabs{\frac{\pr b_{1,g}(z,0)}{\pr z_s}}_{z=0}&=-b_{1,g}(0,0)\frac{\pr V_\Theta}{\pr z_s}(0)
+(2\pi)^{-n}\det\dot R^L(0)g(0)\bigr(\frac{1}{4\pi}\frac{\pr\hat r}{\pr z_s}(0)-
\frac{1}{8\pi}\frac{\pr r}{\pr z_s}(0)\bigr)\\
&+(2\pi)^{-n}\det\dot R^L(0)\frac{\pr g}{\pr z_s}(0)\bigr(\frac{1}{4\pi}\hat r(0)-\frac{1}{8\pi}r(0)\bigr)\\
&+(2\pi)^{-n}\det\dot R^L(0)\bigr(-\frac{1}{4\pi}\bigr)\frac{\pr\triangle_\omega g}{\pr z_s}(0),\\
\rabs{\frac{\pr b_{1,f}(0,z)}{\pr\ol z_s}}_{z=0}&=-b_{1,f}(0,0)\frac{\pr V_\Theta}{\pr\ol z_s}(0)
+(2\pi)^{-n}\det\dot R^L(0)f(0)\bigr(\frac{1}{4\pi}\frac{\pr\hat r}{\pr\ol z_s}(0)-
\frac{1}{8\pi}\frac{\pr r}{\pr\ol z_s}(0)\bigr)\\
&+(2\pi)^{-n}\det\dot R^L(0)\frac{\pr f}{\pr\ol z_s}(0)\bigr(\frac{1}{4\pi}\hat r(0)-\frac{1}{8\pi}r(0)\bigr)\\
&+(2\pi)^{-n}\det\dot R^L(0)\bigr(-\frac{1}{4\pi}\bigr)\frac{\pr\triangle_\omega f}{\pr\ol z_s}(0)
\end{split} 
\end{equation} 
and
\begin{equation}\label{s5-eXXXV}\begin{split}
&\rabs{\frac{\pr^{\abs{\alpha}+\abs{\beta}}b_{1,g}(z,0)}{\pr z^\alpha\pr\ol z^\beta}}_{z=0}=0,\qquad
\forall\alpha,\beta\in\mathbb N^n_0,\ \ \abs{\beta}>0,\\
&\rabs{\frac{\pr^{\abs{\gamma}+\abs{\delta}}b_{1,f}(0,z)}{\pr z^\gamma\pr\ol z^\delta}}_{z=0}=0,\qquad
\forall\gamma,\delta\in\mathbb N^n_0,\ \ \abs{\gamma}>0.
\end{split}
\end{equation}
\end{lem}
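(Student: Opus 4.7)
The argument proceeds in exact parallel with the proof of Lemma~\ref{s4-lII}, upgraded by using Theorem~\ref{s1-tmain} (already established in Section~4) in place of \eqref{s4-eXXXII}. First, the vanishing relations \eqref{s5-eXXXV} are an immediate consequence of the last line of \eqref{s3-eXXI}: the Taylor coefficients of any derivative of $b_{1,g}(z,0)$ that contains at least one $\partial_{\bar z_j}$ vanish at $z=0$ because $\bar\partial_z b_{1,g}(z,0)$ vanishes to infinite order at $z=0$, and symmetrically for $b_{1,f}(0,z)$.

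For \eqref{s5-eXXXIV}, the plan is to identify the restriction to the diagonal. Setting $w=z$ in the phase-function expansion \eqref{s3-eII-bisbbbbb} shows that $b_{1,g}(z,z)$ coincides with the on-diagonal coefficient $b_{1,g}(z)$ from \eqref{s1-e5}; by Theorem~\ref{s1-tmain} (formula \eqref{s1-e16-II}) and the identity $(2\pi)^{-n}\det\dot R^L = V_\omega/V_\Theta$ this gives
\[
b_{1,g}(z,z) = \frac{V_\omega(z)}{V_\Theta(z)}\Bigl(g(z)\bigl(\tfrac{1}{4\pi}\hat r(z)-\tfrac{1}{8\pi}r(z)\bigr)-\tfrac{1}{4\pi}(\triangle_\omega g)(z)\Bigr).
\]
By \eqref{s5-eXXXV}, $\partial_{z_s} b_{1,g}(z,0)|_{z=0}=\partial_{z_s} b_{1,g}(z,z)|_{z=0}$, so it suffices to differentiate the right-hand side above and evaluate at $0$.

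I would then apply the Leibniz rule. From \eqref{s2-eI} one has $V_\Theta(0)=1$ and $\partial_{z_s} V_\omega(0)=0$, so
\[
\partial_{z_s}\bigl(V_\omega/V_\Theta\bigr)(0)=-V_\omega(0)\,\partial_{z_s} V_\Theta(0)=-(2\pi)^{-n}\det\dot R^L(0)\,\partial_{z_s} V_\Theta(0).
\]
Collecting the terms in which the $\partial_{z_s}$ hits $V_\omega/V_\Theta$ yields exactly
$-\bigl(b_{1,g}(0,0)\bigr)\,\partial_{z_s} V_\Theta(0)$, since the bracketed factor evaluated at $z=0$ is $V_\Theta(0)/V_\omega(0)\cdot b_{1,g}(0,0)$. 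The remaining three summands in the asserted formula arise by differentiating, in turn, the factor $g$, the combination $\tfrac{1}{4\pi}\hat r-\tfrac{1}{8\pi}r$, and the term $-\tfrac{1}{4\pi}\triangle_\omega g$, each multiplied by the surviving $(2\pi)^{-n}\det\dot R^L(0)$. The identity for $b_{1,f}(0,z)$ follows from the symmetric argument in which $\partial_{z_s}$ is replaced by $\partial_{\bar z_s}$ and one uses the infinite-order vanishing of $\partial_z b_{1,f}(0,z)$ from \eqref{s3-eXXI}.

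No serious analytic difficulty arises; the only obstacle is bookkeeping, namely correctly grouping the contribution of $\partial_{z_s}(V_\omega/V_\Theta)(0)$ so that it recombines with the two pieces of Theorem~\ref{s1-tmain} into the single prefactor $-b_{1,g}(0,0)\partial_{z_s} V_\Theta(0)$. This is the mechanism that makes the final formula take the clean shape stated, and it is the direct analogue of what happens in Lemma~\ref{s4-lII}.
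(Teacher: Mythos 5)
Your proposal is correct and is essentially the paper's own argument: the paper omits the proof of Lemma~\ref{s5-lII}, stating only that it is the same as that of Lemma~\ref{s4-lII}, and your write-up is precisely that argument with \eqref{s4-eXXXII} replaced by the diagonal formula \eqref{s4-eXXXIII} for $b_{1,g}$. The identification of the off-diagonal derivative with the diagonal one via the infinite-order vanishing in \eqref{s3-eXXI}, the factorization $(2\pi)^{-n}\det\dot R^L=V_\omega V_\Theta^{-1}$, and the Leibniz bookkeeping using $\pr_{z_s}V_\omega(0)=0$, $V_\Theta(0)=1$ all match the intended proof.
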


The proof of Lemma~\ref{s5-lII} is essentially the same as the proof of Lemma~\ref{s4-lII}. We omit the proof. 

From Lemma~\ref{s4-lI}, Lemma~\ref{s5-lI} and Lemma~\ref{s5-lII}, it is not difficult to calculate that
\begin{equation}\label{s5-eXXXVII}\begin{split}
&\triangle_0\bigr(V_\Theta (b_{0,f}(0,z)b_{1,g}(z,0)+b_{1,f}(0,z)b_{0,g}(z,0))\bigr)(0)\\
&=\bigr(b_{0,f}(0,0)b_{1,g}(0,0)+b_{1,f}(0,0)b_{0,g}(0,0)\bigr)\sum^n_{j=1}\frac{1}{\lambda_j}\frac{\pr^2\log V_\Theta}{\pr z_j\pr\ol z_j}(0)\\
&\quad+(2\pi)^{-n}\det\dot R^L(0)\Bigr(b_{0,g}(0,0)\sum^n_{j=1}\frac{1}{\lambda_j}\frac{\pr f}{\pr\ol z_j}(0)\bigr(\frac{1}{4\pi}\frac{\pr\hat r}{\pr z_j}(0)-\frac{1}{8\pi}\frac{\pr r}{\pr z_j}(0)\bigr)\\
&\quad+b_{0,f}(0,0)\sum^n_{j=1}\frac{1}{\lambda_j}\frac{\pr g}{\pr z_j}(0)\bigr(\frac{1}{4\pi}\frac{\pr\hat r}{\pr\ol z_j}(0)-\frac{1}{8\pi}\frac{\pr r}{\pr\ol z_j}(0)\bigr)\Bigr)\\
&\quad+(2\pi)^{-2n}(\det\dot R^L(0))^2\bigr(\frac{1}{2\pi}\hat r(0)-\frac{1}{4\pi}r(0)\bigr)\sum^n_{j=1}\frac{1}{\lambda_j}\frac{\pr f}{\pr\ol z_j}(0)\frac{\pr g}{\pr z_j}(0)\\
&\quad+(2\pi)^{-2n}(\det\dot R^L(0))^2\bigr(-\frac{1}{4\pi}\bigr)\sum^n_{j=1}\frac{1}{\lambda_j}\bigr(\frac{\pr f}{\pr\ol z_j}(0)\frac{\pr\triangle_\omega g}{\pr z_j}(0)+\frac{\pr g}{\pr z_j}(0)\frac{\pr\triangle_\omega f}{\pr\ol z_j}(0)\bigr).
\end{split}
\end{equation}
Combining \eqref{s5-eXXXVII} with Lemma~\ref{s2-lI}, Theorem~\ref{s2-tI} and Corollary~\ref{s2-cI}, we obtain 
\begin{equation}\label{s5-ecompXV}\begin{split}
&\triangle_0\bigr(V_\Theta (b_{0,f}(0,z)b_{1,g}(z,0)+b_{1,f}(0,z)b_{0,g}(z,0))\bigr)(0)\\
&=\bigr(b_{0,f}(0,0)b_{1,g}(0,0)+b_{1,f}(0,0)b_{0,g}(0,0)\bigr)\bigr(-\frac{1}{2\pi}\bigr)\hat r(0)\\
&\quad+(2\pi)^{-2n}(\det\dot R^L(0))^2\Bigr(g(0)\bigr(\frac{1}{4\pi^2}\langle\,\ddbar f\,,\ddbar\hat r\,\rangle_\omega(0)
-\frac{1}{8\pi^2}\langle\,\ddbar f\,,\ddbar r\,\rangle_\omega(0)\bigr)\\
&\quad+f(0)\bigr(\frac{1}{4\pi^2}\langle\,\pr g\,,\pr\hat r\,\rangle_\omega(0)
-\frac{1}{8\pi^2}\langle\,\pr g\,,\pr r\,\rangle_\omega(0)\bigr)+\bigr(\frac{1}{2\pi^2}\hat r(0)-\frac{1}{4\pi^2}r(0)\bigr)
\langle\,\ddbar f\,,\ddbar\ol g\,\rangle_\omega(0)\\
&\quad-\frac{1}{4\pi^2}\langle\,\ddbar f\,,\ddbar\triangle_\omega\ol g\,\rangle_\omega(0)-\frac{1}{4\pi^2}\langle\,\ddbar\triangle_\omega f\,,\ddbar\ol g\,\rangle_\omega(0)\Bigr).
\end{split}
\end{equation}

As \eqref{s4-eXXIX}, \eqref{s4-eXXX}, we can check that 
\begin{equation} \label{s5-eXXXIX-bis}
\begin{split}
&\triangle^2_0\bigr(\phi_1V_\Theta(b_{0,f}(0,z)b_{1,g}(z,0)+b_{1,f}(0,z)b_{0,g}(z,0))\bigr)(0)\\
&\quad=-\frac{1}{2\pi}r(0)\bigr(b_{0,f}(0,0)b_{1,g}(0,0)+b_{1,f}(0,0)b_{0,g}(0,0)\bigr).
\end{split}
\end{equation}

Now, we compute $\triangle^2_0\bigr(V_\Theta b_{0,f}(0,z)b_{0,g}(z,0)\bigr)(0)$. From Lemma~\ref{s5-lI}, we see that 
\begin{equation}\label{s5-ecompXVbisadd}
\triangle^2_0\bigr(V_\Theta b_{0,f}(0,z)b_{0,g}(z,0)\bigr)(0)=\triangle^2_0\bigr(V_\Theta Fb_0(0,z)b_0(z,0)\bigr)(0),
\end{equation} 
where $F$ is given by \eqref{s5-compV}. Combining \eqref{s5-ecompXVbisadd} with \eqref{s4-eXXXXVII}, we obtain
\begin{equation} \label{s5-eXXXXVII}
\begin{split}
&\triangle^2_0\bigr(V_\Theta b_{0,f}(0,z)b_{0,g}(z,0)\bigr)(0)\\
&=\triangle^2_0\bigr(V_\Theta Fb_0(0,z)b_0(z,0)\bigr)(0)\\
&=(2\pi)^{-2n}(\det\dot R^L(0))^2F(0)\Bigr(\frac{1}{4\pi^2}\bigr(\triangle_\omega\hat r\bigr)(0)+\frac{1}{\pi^2}\langle\,R^{\det}_\Theta\,,{\rm Ric\,}_\omega\,\rangle_\omega(0)\\
&\quad+\frac{1}{4\pi^2}\bigr(\hat r\bigr)^2(0)+\frac{1}{\pi^2}\abs{R^{\det}_\Theta}^2_\omega(0)\Bigr)\\
&\quad+(2\pi)^{-2n}(\det\dot R^L(0))^2\Bigr(-\frac{1}{\pi^2}\langle\,\ddbar F\,,\ddbar\hat r\,\rangle_\omega(0)
-\frac{1}{\pi^2}\langle\,\pr F\,,\pr\hat r\,\rangle_\omega(0)\\
&\quad+\frac{1}{2\pi^2}\hat r(0)\bigr(\triangle_\omega F\bigr)(0)-\frac{2}{\pi^2}\langle\,\ddbar\pr F\,,R^{\det}_\Theta\,\rangle_\omega(0)\\
&\quad-\frac{1}{\pi^2}\langle\,\ddbar\pr F\,,{\rm Ric\,}_\omega\,\rangle_\omega(0)+\frac{1}{4\pi^2}\bigr(\triangle^2_\omega F\bigr)(0)\Bigr).
\end{split}
\end{equation} 

We need 

\begin{lem} \label{s5-lIII}
We have 
\begin{equation}\label{s5-ecompXVI}
\begin{split}
&F(0)=f(0)g(0),\ \ (\ddbar F)(0)=g(0)(\ddbar f)(0),\\
&(\pr F)(0)=f(0)(\pr g)(0),\ \ (\ddbar\pr F)(0)=(\ddbar f)(0)\wedge(\pr g)(0),
\end{split}
\end{equation}
\begin{equation}\label{s5-ecompXVII}
\bigr(\triangle_\omega F\bigr)(0)=-2\langle\,\ddbar f\,,\ddbar\ol g\,\rangle_\omega(0),
\end{equation}
\begin{equation}\label{s5-ecompXVIII}
\bigr(\triangle^2_\omega F\bigr)(0)=4\langle\,D^{0,1}\ddbar f\,,D^{0,1}\ddbar\ol g\,\rangle_\omega(0)+4\langle\,\ddbar f\wedge\pr g\,,{\rm Ric\,}_\omega\,\rangle_\omega(0).
\end{equation}
\end{lem}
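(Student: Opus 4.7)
\medskip

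\noindent\textbf{Proof proposal for Lemma~\ref{s5-lIII}.} The plan is to exploit the product structure of \eqref{s5-compV}: up to terms of arbitrary order, $F$ is the formal product of the anti-holomorphic Taylor series of $f$ (with coefficients $\frac{\pr^{|\alpha|}f}{\pr\ol z^\alpha}(0)$) and the holomorphic Taylor series of $g$ (with coefficients $\frac{\pr^{|\beta|}g}{\pr z^\beta}(0)$). All derivatives at $z=0$ can thus be read off by selecting the appropriate $(\alpha,\beta)$ pair. First, I would verify \eqref{s5-ecompXVI}: the value $F(0)=f(0)g(0)$ comes from $\alpha=\beta=0$; for $(\ddbar F)(0)$ the only surviving contribution is $\alpha=e_j$, $\beta=0$, giving $g(0)(\ddbar f)(0)$; symmetrically $(\pr F)(0)=f(0)(\pr g)(0)$; and for $(\ddbar\pr F)(0)$ only $\alpha=e_j$, $\beta=e_k$ survives, giving $\frac{\pr f}{\pr\ol z_j}(0)\frac{\pr g}{\pr z_k}(0)\,d\ol z_j\wedge dz_k=(\ddbar f)(0)\wedge(\pr g)(0)$.

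Next, for \eqref{s5-ecompXVII}, I would apply \eqref{s2-eXII-I} to reduce to $\triangle_0$, yielding
\[(\triangle_\omega F)(0)=-2\pi(\triangle_0F)(0)=-2\pi\sum_{j=1}^n\frac{1}{\lambda_j}\frac{\pr f}{\pr\ol z_j}(0)\frac{\pr g}{\pr z_j}(0),\]
where the mixed derivative $\frac{\pr^2F}{\pr\ol z_j\pr z_j}(0)=\frac{\pr f}{\pr\ol z_j}(0)\frac{\pr g}{\pr z_j}(0)$ comes from the $\alpha=\beta=e_j$ term. Comparing with \eqref{s2-ecompIII} (from which one reads off the local expression $\langle\,\ddbar f\,,\ddbar\ol g\,\rangle_\omega(0)=\pi\sum_j\frac{1}{\lambda_j}\frac{\pr f}{\pr\ol z_j}(0)\frac{\pr g}{\pr z_j}(0)$) then gives $(\triangle_\omega F)(0)=-2\langle\,\ddbar f\,,\ddbar\ol g\,\rangle_\omega(0)$.

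For \eqref{s5-ecompXVIII}, I would use \eqref{s2-eXII-bis} to split
\[(\triangle^2_\omega F)(0)=4\pi^2(\triangle^2_0F)(0)+4\langle\,\ddbar\pr F\,,{\rm Ric\,}_\omega\,\rangle_\omega(0).\]
The Ricci term is already identified by the formula for $\ddbar\pr F$ obtained in the first step, yielding $4\langle\,\ddbar f\wedge\pr g\,,{\rm Ric\,}_\omega\,\rangle_\omega(0)$. For the remaining term I compute, for each $j,k$,
\[\frac{\pr^4F}{\pr\ol z_j\pr z_j\pr\ol z_k\pr z_k}(0)=\frac{\pr^2f}{\pr\ol z_j\pr\ol z_k}(0)\frac{\pr^2g}{\pr z_j\pr z_k}(0),\]
since only $\alpha=e_j+e_k$, $\beta=e_j+e_k$ contributes (terms with $|\alpha|$ or $|\beta|$ of different parity leave unkilled $z$'s or $\ol z$'s at the origin). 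Summing with weights $\frac{1}{\lambda_j\lambda_k}$ and comparing with \eqref{s2-ecomppreII} identifies $\pi^2(\triangle^2_0F)(0)$ with $\langle\,D^{0,1}\ddbar f\,,D^{0,1}\ddbar\ol g\,\rangle_\omega(0)$, completing the proof.

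The only non-routine step is verifying that no other multi-indices $(\alpha,\beta)$ contribute to $\frac{\pr^4F}{\pr\ol z_j\pr z_j\pr\ol z_k\pr z_k}(0)$; this follows because each term in \eqref{s5-compV} is a monomial $\ol z^\alpha z^\beta$, whose derivative at the origin vanishes unless the multi-indices match exactly, and the $O(|z|^N)$ remainder is killed by taking $N$ large.
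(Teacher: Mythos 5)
Your proposal is correct and follows essentially the same route as the paper: read off all derivatives of $F$ at $0$ from the product Taylor structure of \eqref{s5-compV}, reduce $\triangle_\omega$ to $\triangle_0$ at the origin, and for \eqref{s5-ecompXVIII} split via \eqref{s2-eXII-bis} and identify the two resulting terms using $(\ddbar\pr F)(0)=(\ddbar f)(0)\wedge(\pr g)(0)$ and \eqref{s2-ecomppreII}. You merely supply details (the multi-index bookkeeping and the local formula for $\langle\,\ddbar f\,,\ddbar\ol g\,\rangle_\omega(0)$) that the paper leaves as ``easy'' or ``can check''.
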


\begin{proof} 
\eqref{s5-ecompXVI} is easy. From \eqref{s5-compV}, \eqref{s2-eII} and \eqref{s2-eIV}, we can check that
\[\bigr(\triangle_\omega F\bigr)(0)=-2\pi\sum^n_{j=1}\frac{1}{\lambda_j}\frac{\pr f}{\pr\ol z_j}(0)\frac{\pr g}{\pr z_j}(0)=
-2\langle\,\ddbar f\,,\ddbar\ol g\,\rangle_\omega(0).\]
\eqref{s5-ecompXVII} follows. 

From \eqref{s2-eXII-bis}, \eqref{s5-compV} and \eqref{s2-ecomppreII}, we have
\begin{equation}\label{s5-ecompXIX}
\begin{split}
(\triangle_\omega^2F)(0)&=4\pi^2(\triangle^2_0F)(0)+4\langle\,\ddbar\pr F\,,{\rm Ric\,}_\omega\,\rangle_\omega(0)\\
&=4\pi^2\sum^n_{j=1}\frac{1}{\lambda_j\lambda_k}\frac{\pr^4F}{\pr z_j\pr\ol z_j\pr z_k\pr\ol z_k}(0)
+4\langle\,\ddbar f\wedge\pr g\,,{\rm Ric\,}_\omega\,\rangle_\omega(0)\\
&=4\pi^2\sum^n_{j=1}\frac{1}{\lambda_j\lambda_k}\frac{\pr^2f}{\pr\ol z_j\pr\ol z_k}(0)\frac{\pr^2g}{\pr z_j\pr z_k}(0)
+4\langle\,\ddbar f\wedge\pr g\,,{\rm Ric\,}_\omega\,\rangle_\omega(0)\\
&=4\langle\,D^{0,1}\ddbar f\,,D^{0,1}\ddbar\ol g\,\rangle_\omega(0)+
4\langle\,\ddbar f\wedge\pr g\,,{\rm Ric\,}_\omega\,\rangle_\omega(0).
\end{split}
\end{equation}
\eqref{s5-ecompXVIII} follows.
\end{proof}

Combining Lemma~\ref{s5-lIII} with \eqref{s5-eXXXXVII}, we conclude that
\begin{equation} \label{s5-ecompXX}
\begin{split}
&\triangle^2_0\bigr(V_\Theta b_{0,f}(0,z)b_{0,g}(z,0)\bigr)(0)\\
&=(2\pi)^{-2n}(\det\dot R^L(0))^2f(0)g(0)\Bigr(\frac{1}{4\pi^2}\bigr(\triangle_\omega\hat r\bigr)(0)+\frac{1}{\pi^2}\langle\,R^{\det}_\Theta\,,{\rm Ric\,}_\omega\,\rangle_\omega(0)\\
&\quad+\frac{1}{4\pi^2}\bigr(\hat r\bigr)^2(0)+\frac{1}{\pi^2}\abs{R^{\det}_\Theta}^2_\omega(0)\Bigr)\\
&\quad+(2\pi)^{-2n}(\det\dot R^L(0))^2\Bigr(-\frac{1}{\pi^2}g(0)\langle\,\ddbar f\,,\ddbar\hat r\,\rangle_\omega(0)
-\frac{1}{\pi^2}f(0)\langle\,\pr g\,,\pr\hat r\,\rangle_\omega(0)\\
&\quad-\frac{1}{\pi^2}\hat r(0)\langle\,\ddbar f\,,\ddbar\ol g\,\rangle_\omega(0)-\frac{2}{\pi^2}\langle\,\ddbar f\wedge\pr g\,,R^{\det}_\Theta\,\rangle_\omega(0)\\
&\quad+\frac{1}{\pi^2}\langle\,D^{0,1}\ddbar f\,,D^{0,1}\ddbar\ol g\,\rangle_\omega(0)\Bigr).
\end{split}
\end{equation} 

Now, we compute $\triangle^3_0\bigr(\phi_1V_\Theta b_{0,f}(0,z)b_{0,g}(z,0)\bigr)(0)$. As before, we have 
\begin{equation}\label{s5-ecompXXI}
\triangle^3_0\bigr(\phi_1V_\Theta b_{0,f}(0,z)b_{0,g}(z,0)\bigr)(0)=\triangle^3_0\bigr(\phi_1V_\Theta Fb_0(0,z)b_0(z,0)\bigr)(0).
\end{equation} 
From \eqref{s5-ecompXXI}, \eqref{s4-eXXXXX} and Lemma~\ref{s5-lIII}, it is easy to check that
\begin{equation} \label{s5-ecompXXII}
\begin{split}
&\triangle^3_0\bigr(\phi_1V_\Theta b_{0,f}(0,z)b_{0,g}(z,0)\bigr)(0)\\
&=\triangle^3_0\bigr(\phi_1V_\Theta Fb_0(0,z)b_0(z,0)\bigr)(0)\\
&=(2\pi)^{-2n}(\det\dot R^L(0))^2f(0)g(0)\Bigr(\frac{1}{4\pi^2}\bigr(\triangle_\omega r\bigr)(0)+\frac{2}{\pi^2}\abs{{\rm Ric\,}_\omega}^2_\omega(0)+\frac{1}{\pi^2}\abs{R^{TX}_\omega}^2_\omega(0)\\
&\quad+\frac{3}{4\pi^2}r(0)\hat r(0)+\frac{6}{\pi^2}\langle\,{\rm Ric\,}_\omega\,,R^{\det}_\Theta\,\rangle_\omega(0)\Bigr)\\
&\quad+(2\pi)^{-2n}(\det\dot R^L(0))^2
\Bigr(-\frac{3}{2\pi^2}r(0)\langle\,\ddbar f\,,\ddbar\ol g\,\rangle_\omega(0)-\frac{6}{\pi^2}\langle\,\ddbar f\wedge\pr g\,,{\rm Ric\,}_\omega\,\rangle_\omega(0)\\
&\quad-\frac{3}{2\pi^2}g(0)\langle\,\ddbar f\,,\ddbar r\,\rangle_\omega(0)-\frac{3}{2\pi^2}f(0)\langle\,\pr g\,,\pr r\,\rangle_\omega(0)\Bigr).
\end{split}
\end{equation} 

As \eqref{s4-eXXXXXIII}, we have 
\begin{equation} \label{s5-ecompXXIII}
\begin{split}
&\triangle^4_0\bigr(\phi^2_1V_\Theta b_{0,f}(0,z)b_{0,g}(z,0)\bigr)(0)\\
&=(2\pi)^{-2n}(\det\dot R^L(0))^2f(0)g(0)\Bigr(\frac{24}{\pi^2}\abs{{\rm Ric\,}_\omega}^2_\omega(0)+\frac{6}{\pi^2}\abs{R^{TX}_\omega}^2_\omega(0)+\frac{3}{2\pi^2}r^2(0)\Bigr).
\end{split}
\end{equation}

Combining \eqref{s5-eXV} with \eqref{s4-eXVII}, \eqref{s4-eXXXIII}, \eqref{s4-eXXXXXVI}, \eqref{s5-ecompXV}, 
\eqref{s5-eXXXIX-bis}, \eqref{s5-ecompXX}, \eqref{s5-ecompXXII}, \eqref{s5-ecompXXIII} and some very complicated but elementary computations, we deduce 
\begin{equation} \label{s5-ecompXXIV} 
\begin{split} 
b_{2,f,g}(0)&=(2\pi)^{-n}\det\dot R^L(0)f(0)g(0)\Bigr(\frac{1}{32\pi^2}\bigr(\hat r\bigr)^2(0)-\frac{1}{32\pi^2}\hat r(0)r(0)\\
&\quad+\frac{1}{128\pi^2}r^2(0)-\frac{1}{32\pi^2}\bigr(\triangle_\omega\hat r\bigr)(0)+\frac{1}{8\pi^2}\langle\,{\rm Ric\,}_\omega\,,R^{\det}_\Theta\,\rangle_\omega(0)-\frac{1}{8\pi^2}\abs{R^{\det}_\Theta}^2_\omega(0)\\
&\quad+\frac{1}{96\pi^2}\bigr(\triangle_\omega r\bigr)(0)-\frac{1}{24\pi^2}\abs{{\rm Ric\,}_\omega}^2_\omega(0)+\frac{1}{96\pi^2}\abs{R^{TX}_\omega}^2_\omega(0)\Bigr)\\
&\quad+(2\pi)^{-n}\det\dot R^L(0)\Bigr(\bigr(\frac{1}{8\pi^2}\hat r-\frac{1}{16\pi^2}r\bigr)\langle\,\ddbar f\,,\ddbar\ol g\,\rangle_\omega+\frac{1}{16\pi^2}f\bigr(\triangle_\omega g\bigr)\bigr(-\hat r+\frac{1}{2}r\bigr)\\
&\quad+\frac{1}{16\pi^2}g\bigr(\triangle_\omega f\bigr)\bigr(-\hat r+\frac{1}{2}r\bigr)-\frac{1}{4\pi^2}f\langle\,\ddbar\pr g\,,R^{\det}_\Theta\,\rangle_\omega-\frac{1}{4\pi^2}g\langle\,\ddbar\pr f\,,R^{\det}_\Theta\,\rangle_\omega\\
&\quad+\frac{1}{8\pi^2}f\langle\,\ddbar\pr g\,,{\rm Ric\,}_\omega\,\rangle_\omega
+\frac{1}{8\pi^2}g\langle\,\ddbar\pr f\,,{\rm Ric\,}_\omega\,\rangle_\omega
+\frac{1}{4\pi^2}\langle\,\ddbar f\wedge\pr g\,,{\rm Ric\,}_\omega\,\rangle_\omega\\
&\quad-\frac{1}{4\pi^2}\langle\,\ddbar f\wedge\pr g\,,R^{\det}_\Theta\,\rangle_\omega
-\frac{1}{8\pi^2}\langle\,\ddbar f\,,\ddbar\triangle_\omega\ol g\,\rangle_\omega-\frac{1}{8\pi^2}\langle\,\ddbar\triangle_\omega f\,,\ddbar\ol g\,\rangle_\omega\\
&\quad+\frac{1}{8\pi^2}\langle\,D^{0,1}\ddbar f\,,D^{0,1}\ddbar\ol g\,\rangle_\omega+\frac{1}{32\pi^2}f\bigr(\triangle^2_\omega g\bigr)+\frac{1}{32\pi^2}g\bigr(\triangle^2_\omega f\bigr)\\
&\quad+\frac{1}{16\pi^2}\bigr(\triangle_\omega g\bigr)\bigr(\triangle_\omega f\bigr)\Bigr)(0).
\end{split}
\end{equation} 

In view of \eqref{s4-eXXXXXVI}, we see that
\begin{equation}\label{s5-ecompXXV} 
\begin{split}
b_{2,fg}(0)&=(2\pi)^{-n}\det\dot R^L(0)f(0)g(0)\Bigr(\frac{1}{32\pi^2}\bigr(\hat r\bigr)^2(0)-\frac{1}{32\pi^2}\hat r(0)r(0)\\
&\quad+\frac{1}{128\pi^2}r^2(0)-\frac{1}{32\pi^2}\bigr(\triangle_\omega\hat r\bigr)(0)+\frac{1}{8\pi^2}\langle\,{\rm Ric\,}_\omega\,,R^{\det}_\Theta\,\rangle_\omega(0)-\frac{1}{8\pi^2}\abs{R^{\det}_\Theta}^2_\omega(0)\\
&\quad+\frac{1}{96\pi^2}\bigr(\triangle_\omega r\bigr)(0)-\frac{1}{24\pi^2}\abs{{\rm Ric\,}_\omega}^2_\omega(0)+\frac{1}{96\pi^2}\abs{R^{TX}_\omega}^2_\omega(0)\Bigr)\\
&\quad+(2\pi)^{-n}\det\dot R^L(0)\Bigr(-\frac{1}{16\pi^2}\hat r\bigr(\triangle_\omega(fg)\bigr)+\frac{1}{32\pi^2}r\bigr(\triangle_\omega(fg)\bigr)\\
&\quad-\frac{1}{4\pi^2}\langle\,\ddbar\pr(fg)\,,R^{\det}_\Theta\,\rangle_\omega+
\frac{1}{8\pi^2}\langle\,\ddbar\pr(fg)\,,{\rm Ric\,}_\omega\,\rangle_\omega+\frac{1}{32\pi^2}\bigr(\triangle^2_\omega(fg)\bigr)\Bigr)(0).
\end{split}
\end{equation} 
Note that $\ddbar\pr(fg)=g\ddbar\pr f+f\ddbar\pr g+\ddbar f\wedge\pr g+\ddbar g\wedge\pr f$. From this observation and 
\eqref{s2-ecompI}, \eqref{s2-ecompII}, it is straightforward to see that we can rewrite \eqref{s5-ecompXXV}: 
\begin{equation}\label{s5-ecompXXVI} 
\begin{split}
b_{2,fg}(0)&=(2\pi)^{-n}\det\dot R^L(0)f(0)g(0)\Bigr(\frac{1}{32\pi^2}\bigr(\hat r\bigr)^2(0)-\frac{1}{32\pi^2}\hat r(0)r(0)\\
&\quad+\frac{1}{128\pi^2}r^2(0)-\frac{1}{32\pi^2}\bigr(\triangle_\omega\hat r\bigr)(0)+\frac{1}{8\pi^2}\langle\,{\rm Ric\,}_\omega\,,R^{\det}_\Theta\,\rangle_\omega(0)-\frac{1}{8\pi^2}\abs{R^{\det}_\Theta}^2_\omega(0)\\
&\quad+\frac{1}{96\pi^2}\bigr(\triangle_\omega r\bigr)(0)-\frac{1}{24\pi^2}\abs{{\rm Ric\,}_\omega}^2_\omega(0)+\frac{1}{96\pi^2}\abs{R^{TX}_\omega}^2_\omega(0)\Bigr)\\
&\quad+(2\pi)^{-n}\det\dot R^L(0)\Bigr(\frac{1}{16\pi^2}g\bigr(\triangle_\omega f\bigr)\bigr(-\hat r+\frac{1}{2}r\bigr)+\frac{1}{16\pi^2}f\bigr(\triangle_\omega g\bigr)\bigr(-\hat r+\frac{1}{2}r\bigr)\\
&\quad+\bigr(\frac{1}{8\pi^2}\hat r-\frac{1}{16\pi^2}r\bigr)\langle\,\ddbar f\,,\ddbar\ol g\,\rangle_\omega+\bigr(\frac{1}{8\pi^2}\hat r-\frac{1}{16\pi^2}r\bigr)\langle\,\pr f\,,\pr\ol g\,\rangle_\omega-\frac{1}{4\pi^2}f\langle\,\ddbar\pr g\,,R^{\det}_\Theta\,\rangle_\omega\\
&\quad-\frac{1}{4\pi^2}g\langle\,\ddbar\pr f\,,R^{\det}_\Theta\,\rangle_\omega
+\frac{1}{8\pi^2}f\langle\,\ddbar\pr g\,,{\rm Ric\,}_\omega\,\rangle_\omega
+\frac{1}{8\pi^2}g\langle\,\ddbar\pr f\,,{\rm Ric\,}_\omega\,\rangle_\omega\\
&\quad+\frac{1}{4\pi^2}\langle\,\ddbar f\wedge\pr g\,,{\rm Ric\,}_\omega\,\rangle_\omega+\frac{1}{4\pi^2}\langle\,\ddbar g\wedge\pr f\,,{\rm Ric\,}_\omega\,\rangle_\omega-\frac{1}{4\pi^2}\langle\,\ddbar f\wedge\pr g\,,R^{\det}_\Theta\,\rangle_\omega\\
&\quad-\frac{1}{4\pi^2}\langle\,\ddbar g\wedge\pr f\,,R^{\det}_\Theta\,\rangle_\omega-\frac{1}{8\pi^2}\langle\,\ddbar f\,,\ddbar\triangle_\omega\ol g\,\rangle_\omega-\frac{1}{8\pi^2}\langle\,\ddbar\triangle_\omega f\,,\ddbar\ol g\,\rangle_\omega\\
&\quad-\frac{1}{8\pi^2}\langle\,\pr f\,,\pr \triangle_\omega\ol g\,\rangle_\omega-\frac{1}{8\pi^2}\langle\,\pr\triangle_\omega f\,,\pr\ol g\,\rangle_\omega+\frac{1}{8\pi^2}\langle\,D^{0,1}\ddbar f\,,D^{0,1}\ddbar\ol g\,\rangle_\omega\\
&\quad+\frac{1}{8\pi^2}\langle\,D^{1,0}\pr f\,,D^{1,0}\pr\ol g\,\rangle_\omega
+\frac{1}{4\pi^2}\langle\,\ddbar\pr f\,,\ddbar\pr\ol g\,\rangle_\omega
+\frac{1}{32\pi^2}f\bigr(\triangle^2_\omega g\bigr)+\frac{1}{32\pi^2}g\bigr(\triangle^2_\omega f\bigr)\\
&\quad+\frac{1}{16\pi^2}\bigr(\triangle_\omega g\bigr)\bigr(\triangle_\omega f\bigr)\Bigr)(0).
\end{split}
\end{equation} 

Combining \eqref{s5-ecompXXVI} with \eqref{s5-ecompXXIV}, we obtain
\begin{equation} \label{s5-ecompXXVII}
\begin{split}
b_{2,f,g}(0)&=b_{2,fg}(0)+(2\pi)^{-n}\det\dot{R}^L(0)\Bigr(-\frac{1}{4\pi^2}\langle\,\ddbar g\wedge\pr f\,,{\rm Ric\,}_\omega\,\rangle_\omega+\frac{1}{4\pi^2}\langle\,\ddbar g\wedge\pr f\,,R^{\det}_\Theta\,\rangle_\omega \\
&\quad+\frac{1}{8\pi^2}\langle\,\pr\triangle_\omega f\,,\pr\ol g\,\rangle_\omega
+\frac{1}{8\pi^2}\langle\,\ddbar\triangle_\omega g\,,\ddbar\,\ol f\,\rangle_\omega
-\frac{1}{8\pi^2}\langle\,D^{1,0}\pr f\,,D^{1,0}\pr\ol g\,\rangle_\omega\\
&\quad-\frac{1}{4\pi^2}\langle\,\ddbar\pr f\,,\ddbar\pr\ol g\,\rangle_\omega+\frac{1}{8\pi^2}\langle\,\pr f\,,\pr\ol g\,\rangle_\omega(-\hat r+\frac{1}{2}r)\Bigr)(0).
\end{split}
\end{equation}
From \eqref{s5-ecompXXVII}, \eqref{e-compmainIII} follows.

Now, we compute $C_2(f,g)(0)$. Note that
\begin{equation}\label{s5-ecompXXVIII}
b_{2,f,g}(0)=b_{2,fg}(0)+b_{1,C_1(f,g)}(0)+b_{0,C_2(f,g)}(0).
\end{equation} 

In view of \eqref{s5-ecompXIV} and \eqref{s4-eXXXIII}, we know that
\begin{equation}\label{s5-ecompXXIX}
b_{1,C_1(f,g)}(0)=(2\pi)^{-n}\det\dot R^L(0)\Bigr(\bigr(-\frac{1}{8\pi^2}\hat r+\frac{1}{16\pi^2}r\bigr)\langle\,\pr f\,,\pr\ol g\,\rangle_\omega+\frac{1}{8\pi^2}\triangle_\omega(\langle\,\pr f\,,\pr\ol g\,\rangle_\omega\bigr)\Bigr)(0).
\end{equation} 

We need 

\begin{lem} \label{s5-lIV}
We have 
\begin{equation}\label{s5-ecompXXX}
\begin{split}
\triangle_\omega(\langle\,\pr f\,,\pr\ol g\,\rangle_\omega\bigr)(0)&=-2\langle\,\ddbar g\wedge\pr f\,,{\rm Ric\,}_\omega\,\rangle(0)+\langle\,\pr\triangle_\omega f\,,\pr\ol g\,\rangle_\omega(0)+
\langle\,\ddbar\triangle_\omega g\,,\ddbar\,\ol f\,\rangle_\omega(0)\\
&\quad-2\langle\,\ddbar\pr f\,,\ddbar\pr\ol g\,\rangle_\omega(0)
-2\langle\,D^{1,0}\pr f\,,D^{1,0}\pr\ol g\,\rangle_\omega(0).
\end{split}
\end{equation}
\end{lem}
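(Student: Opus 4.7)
The plan is to fix $p$, work in the K\"ahler-normal coordinates of \eqref{s2-eI} (so $p=0$), and compute both sides at $z=0$. By \eqref{s2-eIV} the second-order correction to $\triangle_\omega$ has coefficient $\frac{\pr^2\phi_1}{\pr\ol z_j\pr z_k}(0)=0$ and the $O(|z|^3)$ remainder has coefficients vanishing at $0$, so for any smooth $u$,
\[\triangle_\omega u(0)=-2\pi\sum_{l=1}^n\frac{1}{\lambda_l}\frac{\pr^2 u}{\pr z_l\pr\ol z_l}(0).\]
I would apply this to $u(z)=\langle\,\pr f\,,\pr\ol g\,\rangle_\omega(z)=\sum_{j,k}h^{j,k}(z)\frac{\pr f}{\pr z_j}(z)\frac{\pr g}{\pr\ol z_k}(z)$.

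Inverting the Hermitian matrix described by \eqref{s2-eII} and \eqref{s1-e8} produces the expansion
\[h^{j,k}(z)=\frac{\pi}{\lambda_j}\delta_{jk}-\frac{\pi}{\lambda_j\lambda_k}\frac{\pr^2\phi_1}{\pr z_k\pr\ol z_j}(z)+O(|\phi_1|^2).\]
Because of the vanishing conditions on $\phi_1$ in \eqref{s2-eI}, \emph{every} first-order derivative of $h^{j,k}$ vanishes at $0$, while
\[\frac{\pr^2 h^{j,k}}{\pr z_l\pr\ol z_l}(0)=-\frac{\pi}{\lambda_j\lambda_k}\frac{\pr^4\phi}{\pr z_l\pr\ol z_l\pr z_k\pr\ol z_j}(0).\]
In the Leibniz expansion of $\frac{\pr^2}{\pr z_l\pr\ol z_l}\bigl[h^{j,k}\frac{\pr f}{\pr z_j}\frac{\pr g}{\pr\ol z_k}\bigr]$ at $0$, every cross term carrying a first derivative of $h^{j,k}$ is therefore killed; only the pure-$h^{j,k}$ piece and the pure-$\frac{\pr f}{\pr z_j}\frac{\pr g}{\pr\ol z_k}$ piece survive.

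Multiplying by $-2\pi/\lambda_l$ and summing in $l$, the pure-$h^{j,k}$ piece becomes
\[2\pi^2\sum_{j,k,l=1}^n\frac{1}{\lambda_j\lambda_k\lambda_l}\frac{\pr^4\phi}{\pr z_l\pr\ol z_l\pr z_k\pr\ol z_j}(0)\frac{\pr f}{\pr z_j}(0)\frac{\pr g}{\pr\ol z_k}(0),\]
which after relabeling indices equals $-2\langle\,\ddbar g\wedge\pr f\,,{\rm Ric\,}_\omega\,\rangle_\omega(0)$ by \eqref{s2-ecomppreb0}. The pure-$(f,g)$ piece, after substituting $h^{j,k}(0)=\pi\lambda_j^{-1}\delta_{jk}$ and expanding by Leibniz, splits into four sums with summands (writing $f_{z_j}:=\pr f/\pr z_j$ etc.) of the forms $f_{z_jz_l\ol z_l}g_{\ol z_j}$, $f_{z_jz_l}g_{\ol z_j\ol z_l}$, $f_{z_j\ol z_l}g_{\ol z_jz_l}$ and $f_{z_j}g_{\ol z_jz_l\ol z_l}$; via \eqref{s2-ecomppreIII}, \eqref{s2-ecomppreI}, \eqref{s2-ecomppre0} and \eqref{s2-ecomppreIV} respectively these identify with $\langle\,\pr\triangle_\omega f\,,\pr\ol g\,\rangle_\omega(0)$, $-2\langle\,D^{1,0}\pr f\,,D^{1,0}\pr\ol g\,\rangle_\omega(0)$, $-2\langle\,\ddbar\pr f\,,\ddbar\pr\ol g\,\rangle_\omega(0)$ and $\langle\,\ddbar\triangle_\omega g\,,\ddbar\,\ol f\,\rangle_\omega(0)$. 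Assembling the five contributions gives \eqref{s5-ecompXXX}. The only real obstacle is bookkeeping: the dummy indices produced by the Leibniz expansion must be relabeled consistently with the specific arrangements chosen in Corollary~\ref{s2-cI}, which is routine but attention-demanding.
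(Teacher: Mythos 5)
Your proposal is correct and follows essentially the same route as the paper: both expand the metric coefficient $h^{j,k}$ (equivalently, record $\langle\,\pr f\,,\pr\ol g\,\rangle_\omega$ to second order as in \eqref{s5-ecompXXXI}), use the vanishing properties of $\phi_1$ from \eqref{s2-eI} to reduce $\triangle_\omega$ at the origin to $-2\pi\triangle_0$ and to kill the cross terms, and then match the resulting five sums against \eqref{s2-ecomppreb0}, \eqref{s2-ecomppreIII}, \eqref{s2-ecomppreI}, \eqref{s2-ecomppre0} and \eqref{s2-ecomppreIV}. The only difference is organizational (Leibniz applied directly to $h^{j,k}f_{z_j}g_{\ol z_k}$ rather than via the intermediate display \eqref{s5-ecompXXXII}), and your term-by-term identifications agree with the paper's.
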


\begin{proof}
From \eqref{s2-eII}, it is straightforward to see that 
\begin{equation}\label{s5-ecompXXXI}
\langle\,\pr f\,,\pr\ol g\,\rangle_\omega=\pi\sum^n_{j=1}\frac{1}{\lambda_j}\frac{\pr f}{\pr z_j}\frac{\pr g}{\pr\ol z_j}
-\pi\sum^n_{j,k=1}\frac{1}{\lambda_j\lambda_k}\frac{\pr^2\phi_1}{\pr\ol z_j\pr z_k}\frac{\pr f}{\pr z_j}\frac{\pr g}{\pr\ol z_k}+O(\abs{z}^3).
\end{equation} 
From \eqref{s5-ecompXXXI} and \eqref{s2-eIV} and notice that $\phi_1=O(\abs{z}^4)$, we can check that
\begin{equation}\label{s5-ecompXXXII}
\begin{split}
\triangle_\omega(\langle\,\pr f\,,\pr\ol g\,\rangle_\omega\bigr)(0)&=-2\pi^2\sum^n_{j,s=1}\frac{1}{\lambda_j\lambda_s}\frac{\pr^3f}{\pr z_j\pr z_s\pr\ol z_s}(0)\frac{\pr g}{\pr\ol z_j}(0)\\
&\quad-
2\pi^2\sum^n_{j,s=1}\frac{1}{\lambda_j\lambda_s}\frac{\pr^3g}{\pr\ol z_j\pr z_s\pr\ol z_s}(0)\frac{\pr f}{\pr z_j}(0)\\
&\quad-2\pi^2\sum^n_{j,s=1}\frac{1}{\lambda_j\lambda_s}\frac{\pr^2f}{\pr z_j\pr z_s}(0)\frac{\pr^2g}{\pr\ol z_j\pr\ol z_s}(0)\\
&\quad-2\pi^2\sum^n_{j,s=1}\frac{1}{\lambda_j\lambda_s}\frac{\pr^2f}{\pr z_j\pr\ol z_s}(0)\frac{\pr^2g}{\pr\ol z_j\pr z_s}(0)\\
&\quad+2\pi^2\sum^n_{j,k,s=1}\frac{1}{\lambda_j\lambda_k\lambda_s}\frac{\pr^4\phi}{\pr\ol z_j\pr z_k\pr z_s\pr\ol z_s}(0)\frac{\pr f}{\pr z_j}(0)\frac{\pr g}{\pr\ol z_k}(0).
\end{split}
\end{equation}
Combining \eqref{s5-ecompXXXII} with \eqref{s2-ecomppreIII}, \eqref{s2-ecomppreIV}, \eqref{s2-ecomppreI}, 
\eqref{s2-ecomppre0} and \eqref{s2-ecomppreb0}, we obtain
\[
\begin{split}
\triangle_\omega(\langle\,\pr f\,,\pr\ol g\,\rangle_\omega\bigr)(0)&=-2\langle\,\ddbar g\wedge\pr f\,,{\rm Ric\,}_\omega\,\rangle(0)+\langle\,\pr\triangle_\omega f\,,\pr\ol g\,\rangle_\omega(0)+
\langle\,\ddbar\triangle_\omega g\,,\ddbar\,\ol f\,\rangle_\omega(0)\\
&\quad-2\langle\,\ddbar\pr f\,,\ddbar\pr\ol g\,\rangle_\omega(0)
-2\langle\,D^{1,0}\pr f\,,D^{1,0}\pr\ol g\,\rangle_\omega(0).
\end{split}\]
\eqref{s5-ecompXXX} follows.
\end{proof} 

From \eqref{s5-ecompXXX} and \eqref{s5-ecompXXIX}, we get 
\begin{equation}\label{s5-ecompXXXIII}
\begin{split}
b_{1,C_1(f,g)}(0)&=(2\pi)^{-n}\det\dot R^L(0)\Bigr(\bigr(-\frac{1}{8\pi^2}\hat r+\frac{1}{16\pi^2}r\bigr)\langle\,\pr f\,,\pr\ol g\,\rangle_\omega-\frac{1}{4\pi^2}\langle\,\ddbar g\wedge\pr f\,,{\rm Ric\,}_\omega\,\rangle\\
&\quad+
\frac{1}{8\pi^2}\langle\,\pr\triangle_\omega f\,,\pr\ol g\,\rangle_\omega+\frac{1}{8\pi^2}
\langle\,\ddbar\triangle_\omega g\,,\ddbar\,\ol f\,\rangle_\omega
-\frac{1}{4\pi^2}\langle\,\ddbar\pr f\,,\ddbar\pr\ol g\,\rangle_\omega\\
&\quad-\frac{1}{4\pi^2}\langle\,D^{1,0}\pr f\,,D^{1,0}\pr\ol g\,\rangle_\omega\Bigr)(0).
\end{split}
\end{equation} 
Combining \eqref{s5-ecompXXXIII} with \eqref{s5-ecompXXVII}, we obtain 
\begin{equation} \label{s5-ecompXXXIII-I}
\begin{split}
b_{2,f,g}(0)&=b_{2,fg}(0)+b_{1,C_1(f,g)}(0)\\
&\quad+(2\pi)^{-n}\det\dot{R}^L(0)\Bigr(
\frac{1}{8\pi^2}\langle\,D^{1,0}\pr f\,,D^{1,0}\pr\ol g\,\rangle_\omega+\frac{1}{4\pi^2}\langle\,\ddbar g\wedge\pr f\,,R^{\det}_\Theta\,\rangle_\omega\Bigr)(0).
\end{split}
\end{equation} 
Combining \eqref{s5-ecompXXXIII-I} with \eqref{s5-ecompXXVIII}, \eqref{s4-eXVII}, we conclude that 
\[C_2(f,g)(0)=\frac{1}{8\pi^2}\langle\,D^{1,0}\pr f\,,D^{1,0}\pr\ol g\,\rangle_\omega(0)+\frac{1}{4\pi^2}\langle\,\ddbar g\wedge\pr f\,,R^{\det}_\Theta\,\rangle_\omega(0).\]
We obtain \eqref{e-compmainVI} and Theorem~\ref{s1-tmaincomp} follows. 

\smallskip

\noindent
{\small\emph{
\textbf{Acknowledgements.} The author would like to thank the institute of Mathematics of Academia Sinica, Taiwan, for
offering excellent working conditions during the month of July, 2011. Furthermore, the author is grateful to Prof. George Marinescu and Prof. Xiannon Ma for 
comments and useful suggestions on an early draft of the manuscript.}}

\end{document}